\DeclareFontFamily{OT1}{pzc}{}
  \DeclareFontShape{OT1}{pzc}{m}{it}{<-> s * [1.200] pzcmi7t}{}
  \DeclareMathAlphabet{\mathpzc}{OT1}{pzc}{m}{it}
\newcommand{\N}{\mathbb{N}}
\newcommand{\Z}{\mathbb{Z}}
\newcommand{\Q}{\mathbb{Q}}
\newcommand{\C}{\mathbb{C}}
\newcommand{\F}{\mathbb{F}}
\renewcommand{\L}{\mathcal{L}}
\newcommand{\E}{\mathcal{E}}
\renewcommand{\P}{\mathbb{P}}
\newcommand{\M}{\mathcal{M}}
\newcommand{\G}{\mathbb{G}}
\newcommand{\X}{\mathcal{X}}
\newcommand{\Y}{\mathcal{Y}}
\newcommand{\Zz}{\mathcal{Z}}
\DeclareFontFamily{U}{wncy}{}
    \DeclareFontShape{U}{wncy}{m}{n}{<->wncyr10}{}
    \DeclareSymbolFont{mcy}{U}{wncy}{m}{n}
    \DeclareMathSymbol{\Sha}{\mathord}{mcy}{"58} 
\newcommand{\B}{\mathcal{B}}
\newcommand{\Bun}{\ensuremath{\operatorname{Bun}}}
\renewcommand{\char}{\ensuremath{\operatorname{char}}}
\newcommand{\Spec}{\ensuremath{\operatorname{Spec}}}
\newcommand{\Aut}{\ensuremath{\operatorname{Aut}}}
\newcommand{\orb}{\mathcal{O}}
\newcommand*\cat[1]{\mathsf{#1}}
\newcommand{\Hom}{\ensuremath{\operatorname{Hom}}}
\newcommand{\sesx}[5]{0\rightarrow #1 \xrightarrow{#4} #2 \xrightarrow{#5} #3 \rightarrow 0}
\newcommand{\A}{\mathbb{A}}
\newcommand{\V}{\mathcal{V}}
\newcommand{\Div}{\ensuremath{\operatorname{Div}}}
\newcommand{\PDiv}{\ensuremath{\operatorname{PDiv}}}
\newcommand{\Pic}{\ensuremath{\operatorname{Pic}}}
\newcommand{\Cl}{\ensuremath{\operatorname{Cl}}}
\renewcommand{\div}{\ensuremath{\operatorname{div}}}
\newcommand{\et}{\ensuremath{\operatorname{\acute{e}t}}}
\newcommand{\Proj}{\ensuremath{\operatorname{Proj}}}
\newcommand{\W}{\mathbb{W}}
\newcommand{\DIV}{\mathfrak{Div}}
\newcommand{\cond}{\ensuremath{\operatorname{cond}}}
\newcommand{\h}{\frak{h}}
\newcommand{\U}{\mathcal{U}}
\renewcommand{\epsilon}{\varepsilon}
\newcommand{\ord}{\ensuremath{\operatorname{ord}}}
\newtheorem{thm}{Theorem}[section]
\newtheorem{prop}[thm]{Proposition}
\newtheorem{lem}[thm]{Lemma}
\newtheorem{defn}[thm]{Definition}
\newtheorem{cor}[thm]{Corollary}
\newtheorem{rem}[thm]{Remark}
  \let\oldrem\rem
  \renewcommand{\rem}{\oldrem\normalfont}
\newtheorem{question}{Question}
\newtheorem{ex}[thm]{Example}
  \let\oldex\ex
  \renewcommand{\ex}{\oldex\normalfont}
\begin{document}




\title{Artin--Schreier Root Stacks}
\author{Andrew Kobin\footnote{The author was partially supported by the National Science Foundation under DMS Grant No.~1900396.}}
\date{}

\maketitle



\begin{abstract}

We classify stacky curves in characteristic $p > 0$ with cyclic stabilizers of order $p$ using higher ramification data. This approach replaces the local root stack structure of a tame stacky curve, similar to the local structure of a complex orbifold curve, with a more sensitive structure called an Artin--Schreier root stack, allowing us to incorporate this ramification data directly into the stack. As an application, we compute dimensions of Riemann--Roch spaces for some examples of stacky curves in positive characteristic and suggest a program for computing spaces of modular forms in this setting. 

\end{abstract}


\section{Introduction}
\label{sec:intro}

\vspace{0.1in}
Over the complex numbers, Deligne--Mumford stacks with generically trivial stabilizer can be understood by studying their underlying complex orbifold structure. This approach leads one to the following classification of stacky curves: over $\C$, or indeed any algebraically closed field of characteristic $0$, a smooth stacky curve is uniquely determined up to isomorphism by its underlying complex curve and a finite list of numbers corresponding to the orders of the (always cyclic) stabilizer groups of the stacky points of the curve (cf. \cite{gs}). 

Such a concise statement fails in positive characteristic since stabilizer groups may be nonabelian. Even in the case of stacky curves over an algebraically closed field $k$ of characteristic $p > 0$, there exist infinitely many nonisomorphic stacky curves over $k$ with the same underlying scheme and stacky point with stabilizer group abstractly isomorphic to $\Z/p\Z$. Thus any attempt at classifying stacky curves in characteristic $p$ will require finer invariants than the order of the stabilizer group. Our main results, stated below, provide a classification of stacky curves in characteristic $p > 0$ having nontrivial stabilizers of order $p$, using higher ramification data at the stacky points. 

A new tool called an {\it Artin--Schreier root stack}, denoted $\wp_{m}^{-1}((L,s,f)/X)$, will be constructed and studied in Section~\ref{sec:ASrootstacks}, but it appears in the statements of our main results below so a brief introduction is in order. The object $\wp_{m}^{-1}((L,s,f)/X)$ is defined using the data of a line bundle $L$ on $X$ and two sections $s\in H^{0}(X,L)$ and $f\in H^{0}(X,L^{\otimes m})$. It replaces the root stack $\sqrt[r]{(L,s)/X}$ from the theory of tame stacky curves. Our main results are the following theorems. 

\begin{thm}[Theorem~\ref{thm:pcoverfactorsthroughASrootstack}]
\label{thm:1}
Suppose $\varphi : Y\rightarrow X$ is a finite separable Galois cover of curves over an algebraically closed field $k$ of characteristic $p > 0$ and $y\in Y$ is a ramification point with image $x\in X$ such that the inertia group $I(y\mid x)$ is $\Z/p\Z$. Then \'{e}tale-locally, $\varphi$ factors through an Artin--Schreier root stack $\wp_{m}^{-1}((L,s,f)/X)$. 
\end{thm}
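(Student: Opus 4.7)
The plan is to combine the classical Artin--Schreier description of wild degree-$p$ extensions of complete local rings with the universal property of the Artin--Schreier root stack $\wp_{m}^{-1}((L,s,f)/X)$ built in Section~\ref{sec:ASrootstacks}: first extract the local ramification datum at $y$, then globalize it as a triple $(L,s,f)$ over an étale neighborhood of $x$, and finally invoke the root stack's universal property.

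Starting from completions, the hypothesis $I(y\mid x)\cong\Z/p\Z$ together with algebraically closed residue field makes $\widehat{\mathcal{O}}_{Y,y}/\widehat{\mathcal{O}}_{X,x}$ a totally ramified $\Z/p\Z$-Galois extension of complete DVRs in characteristic $p$. By Artin--Schreier theory and the standard normal form (absorbing positive-valuation perturbations by $\wp$), one can pick a uniformizer $\pi$ of $\widehat{\mathcal{O}}_{X,x}$, an integer $m\geq 1$ coprime to $p$, and $z\in\widehat{\mathcal{O}}_{Y,y}$ with
\[
\widehat{\mathcal{O}}_{Y,y} \;\cong\; \widehat{\mathcal{O}}_{X,x}[z]/\bigl(z^{p}-z-g\bigr),
\]
where $g$ has a pole of order exactly $m$ at $x$ and $m$ is the unique lower-numbering ramification break of the extension. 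This integer $m$ is precisely the one that should appear in $\wp_{m}^{-1}$, and $g$ is the function that one wants to realize as $f/s^{m}$ on the nose.

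Next I would descend this datum to an étale neighborhood $U\to X$ of $x$. By Artin approximation, after shrinking $U$ I may assume $\pi$ extends to a regular function cutting out $x$ and $g$ to a rational function on $U$ with a pole of order exactly $m$ at $x$. Taking $L=\mathcal{O}_{U}(x)$ with its tautological section $s$ (so that $s$ vanishes to order $1$ at $x$), the function $g$ becomes a section $f\in H^{0}(U,L^{\otimes m})$ under the identification $L^{\otimes m}\cong\mathcal{O}_{U}(mx)$. The universal property of $\wp_{m}^{-1}((L,s,f)/U)$ established in Section~\ref{sec:ASrootstacks} should then turn the Artin--Schreier root $z$ on $\varphi^{-1}(U)$ into a morphism $\varphi^{-1}(U)\to\wp_{m}^{-1}((L,s,f)/U)$ factoring $\varphi|_{\varphi^{-1}(U)}$.

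The step I expect to be the main obstacle is the passage from the formal picture to the étale one, combined with honest (not merely formal) verification of the universal property: one must ensure that the section $f$ and the uniformizing section $s$ chosen on $U$ reproduce the true Artin--Schreier extension on $\varphi^{-1}(U)$, and that the integer $m$ read off from the ramification filtration is compatible with the tensor power appearing in the very definition of $\wp_{m}^{-1}$. Reconciling these with the construction in Section~\ref{sec:ASrootstacks} is where the \emph{étale-locally} clause in the statement does the real work.
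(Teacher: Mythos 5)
Your proposal is correct and follows essentially the paper's own argument: pass to completions, put the degree-$p$ extension in Artin--Schreier normal form with ramification jump $m$, use Artin approximation to spread the datum out to an \'{e}tale neighborhood $U$ of $x$, and encode it as a triple $(L,s,f)=(\orb_{U}(x),s,g)$ over $U$. The only cosmetic difference is in the last step: the paper realizes the cover over $U$ via Harbater's results and then invokes the explicit isomorphism $[V/(\Z/p\Z)]\cong\wp_{m}^{-1}((L,s,f)/U)$ of Example~\ref{ex:ASrootstackP1}, whereas you appeal directly to the universal property (i.e.\ the $T$-points description of the normalized pullback), which is exactly the route of the paper's alternate proof sketched in the remark following the theorem.
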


This says that cyclic $p$-covers of curves $Y\rightarrow X$ yield quotient stacks that have an Artin--Schreier root stack structure. By Artin--Schreier theory, over an algebraically closed field of characteristic $p$ there are infinitely many non-isomorphic curves $Y$ covering $\P^{1}$ with Galois group $\Z/p\Z$, so the theorem implies we have infinitely many non-isomorphic stacky curves over $\P^{1}$ with a single stacky point of order $p$. This is a phenomenon that only arises in positive characteristic. 

Next, if a stacky curve has an order $p$ stacky point, then locally about this point the stacky curve is isomorphic to an Artin--Schreier root stack: 

\begin{thm}[Theorem~\ref{thm:locallyASrootstack}(1)]
\label{thm:2}
Let $\X$ be a stacky curve over a perfect field $k$ of characteristic $p > 0$. If $\X$ contains a stacky point $x$ of order $p$, there is an open substack $\Zz\subseteq\X$ containing $x$ such that $\Zz\cong\wp_{m}^{-1}((L,s,f)/Z)$ where $(m,p) = 1$, $Z$ is an open subscheme of the coarse space $X$ of $\X$ and a triple $(L,s,f)$ on $Z$ as above. 
\end{thm}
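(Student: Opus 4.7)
The plan is to reduce to Theorem~\ref{thm:1} by presenting $\X$ locally near $x$ as a quotient stack $[Y/(\Z/p\Z)]$ for a wildly ramified cyclic $p$-cover $Y\to Z$ of smooth curves, and then identifying the resulting Artin--Schreier root stack with $\X$ by a local comparison of completions.

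First I would let $\pi : \X \to X$ be the coarse moduli morphism and choose an affine open neighborhood $Z \subseteq X$ of $\pi(x)$ small enough that $x$ is the only stacky point of $\Zz_0 := \pi^{-1}(Z)$. Since $\Zz_0$ is a smooth DM stack of dimension one over $k$ with cyclic stabilizer $\Z/p\Z$ concentrated at $x$ and trivial stabilizer elsewhere, after possibly shrinking $Z$ further (or briefly passing to an étale neighborhood of $\pi(x)$), I would present $\Zz_0$ as a quotient $[Y/(\Z/p\Z)]$ where $Y\to Z$ is a finite Galois cover of smooth curves with group $\Z/p\Z$, wildly ramified only over $\pi(x)$. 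Then Theorem~\ref{thm:1} applied to this cover produces a factorization $Y \to \wp_m^{-1}((L,s,f)/Z) \to Z$ through an Artin--Schreier root stack, for some triple $(L,s,f)$ and integer $m$ with $\gcd(m,p)=1$, the latter value recording the unique upper-numbering ramification jump of $Y\to Z$ at $x$. Because the factoring map $Y \to \wp_m^{-1}((L,s,f)/Z)$ is Galois equivariant (the root stack being built from data intrinsic to the $\Z/p\Z$-cover), it descends to a morphism $\Phi : \Zz_0 \to \wp_m^{-1}((L,s,f)/Z)$.

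To conclude, I would show $\Phi$ is an isomorphism. Both source and target are smooth DM stacks of dimension one over $k$ with coarse space $Z$; both have a single stacky point with residual gerbe $B(\Z/p\Z)$ lying over $\pi(x)$; and both restrict to an isomorphism with $Z$ on the complement. It therefore suffices to check $\Phi$ is an isomorphism on a strict Henselian (or formal) neighborhood of the stacky point, which follows because $\wp_m^{-1}((L,s,f)/Z)$ is, by construction, cut out by precisely the Artin--Schreier equation of the form $z^p - z = f/s^m$ whose solution defines $Y\to Z$ near $x$. If an étale base change was required to produce the presentation $[Y/(\Z/p\Z)]$, faithfully flat descent using the Galois equivariance of the triple $(L,s,f)$ transports the isomorphism back to $Z$, yielding the desired open substack $\Zz = \Zz_0$.

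The main obstacle is the local isomorphism check: one needs the universal property of $\wp_m^{-1}((L,s,f)/-)$, as developed in Section~\ref{sec:ASrootstacks}, to be rigid enough that $\Phi$ is identified as an isomorphism rather than merely a finite map to some quotient, so that the two stacks coincide rather than sit in a nontrivial tower. A secondary subtlety is the descent of the data $(L,s,f)$ from an étale neighborhood of $\pi(x)$ to $Z$ itself, which should be automatic from Galois equivariance but requires careful cocycle bookkeeping; should this descent fail in some pathological setting, one at least obtains an étale-local form of the conclusion, still adequate for the sequel.
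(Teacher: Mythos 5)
Your overall strategy is the paper's: both proofs run through Theorem~\ref{thm:pcoverfactorsthroughASrootstack} and then identify $\pi^{-1}(Z)$ with $\wp_{m}^{-1}((L,s,f)/Z)$ by a map that is an isomorphism because it is one away from $x$ and étale-locally at $x$. Where you diverge is in how the comparison morphism is produced. You first want a presentation of $\pi^{-1}(Z)$ as a global quotient $[Y/(\Z/p\Z)]$ over the Zariski-open $Z$ and then descend the factoring map $Y\to\wp_{m}^{-1}((L,s,f)/Z)$ along $Y\to[Y/(\Z/p\Z)]$; but Lemma~\ref{lem:locallyquotientstack} only furnishes such a presentation over an \emph{étale} neighborhood of $\pi(x)$, so your route forces exactly the étale-to-Zariski descent of the triple that you flag as ``cocycle bookkeeping.'' Note that your fallback --- settling for an étale-local form of the conclusion --- would not prove the theorem as stated, since the statement requires $Z$ to be an open subscheme of the coarse space (and the Zariski/global distinction is genuinely delicate here, cf.\ Example~\ref{ex:nonP1cex}). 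The paper sidesteps both issues: it fixes $L=\orb_{Z}(x)$ and the sections $s,f$ on $Z$ itself (shrinking $Z$ to an affine open, so that by the discussion in Remark~\ref{rem:ASrootstackpts} there is no cohomological obstruction to the section $f$), and then, rather than descending from a single chart, it observes that \emph{every} étale chart $T\to\pi^{-1}(Z)$ gives a one-point $\Z/p\Z$-cover $T\to Z$ with jump $m$ which factors through $\wp_{m}^{-1}((L,s,f)/Z)$ by Theorem~\ref{thm:pcoverfactorsthroughASrootstack}; the universal property of the Artin--Schreier root stack (as a normalized pullback) then assembles these factorizations into a canonical morphism $\pi^{-1}(Z)\to\wp_{m}^{-1}((L,s,f)/Z)$ with no descent data for $f$ to check, and this morphism is an isomorphism for the local reasons you give. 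Also, a small correction: $m$ here is the jump in the \emph{lower} numbering (the order of vanishing of $f$ relative to $s$), not the upper-numbering jump; for a single $\Z/p\Z$-extension they coincide, but your phrasing invites confusion later. So your plan can be completed, but the cleanest repair is to replace the quotient-presentation-plus-descent step with the paper's universal-property argument, or at minimum to kill the cocycle issue by taking $Z$ affine from the start.
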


Moreover, if the coarse space of $\X$ is $\P^{1}$, then $\X$ is the fibre product of finitely many global Artin--Schreier root stacks: 

\begin{thm}[Theorem~\ref{thm:locallyASrootstack}(2)]
\label{thm:3}
Suppose all the nontrivial stabilizers of $\X$ are cyclic of order $p$. If $\X$ has coarse space $X = \P^{1}$, then $\X$ is isomorphic to a fibre product of Artin--Schreier root stacks of the form $\wp_{m}^{-1}((L,s,f)/\P^{1})$ for some $(m,p) = 1$ and a triple $(L,s,f)$ on $\P^{1}$. 
\end{thm}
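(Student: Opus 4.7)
The strategy is to construct, for each stacky point of $\X$, a global Artin--Schreier root stack on $\P^{1}$ that captures precisely the ramification at that point and is trivial elsewhere, and then realize $\X$ as the fibre product of these. Since $\X$ is a stacky curve, its stacky locus is a finite set of points $x_{1},\ldots,x_{n}$, each with cyclic stabilizer of order $p$ by hypothesis. Applying Theorem~\ref{thm:locallyASrootstack}(1) at each $x_{i}$ yields an open substack $\Zz_{i}\subseteq\X$ containing $x_{i}$ with $\Zz_{i}\cong\wp_{m_{i}}^{-1}((L_{i},s_{i},f_{i})/Z_{i})$ for some $(m_{i},p)=1$ and data $(L_{i},s_{i},f_{i})$ on an open $Z_{i}\subseteq\P^{1}$; after shrinking I may assume $x_{i}$ is the only stacky point of $\Zz_{i}$.

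The next step is to globalize each triple $(L_{i},s_{i},f_{i})$ from $Z_{i}$ to all of $\P^{1}$. Since $\Pic(\P^{1})\cong\Z$, I can extend $L_{i}$ to $\widetilde L_{i}:=\mathcal{O}_{\P^{1}}(d_{i})$ for an appropriate $d_{i}$, and extend $s_{i}$ to a global section $\widetilde s_{i}\in H^{0}(\P^{1},\widetilde L_{i})$ whose divisor is supported only at $x_{i}$ (take $d_{i}$ to equal the order of vanishing of $s_{i}$ at $x_{i}$ and let $\widetilde s_{i}$ cut out $d_{i}\cdot x_{i}$). For $f_{i}$, first extend it to a rational section of $\widetilde L_{i}^{\otimes m_{i}}$, then modify it by an Artin--Schreier move $f_{i}\mapsto f_{i}+\wp(g_{i})\widetilde s_{i}^{m_{i}}$ for a rational function $g_{i}$ on $\P^{1}$ chosen to cancel poles and any Artin--Schreier contributions occurring outside $x_{i}$; this is possible because the partial fraction expansion of a rational function on $\P^{1}$ decomposes precisely into Artin--Schreier pieces at each closed point, so such a $g_{i}$ can be constructed one point at a time. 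The resulting triple $(\widetilde L_{i},\widetilde s_{i},\widetilde f_{i})$ agrees with $(L_{i},s_{i},f_{i})$ near $x_{i}$ up to the equivalence that leaves Artin--Schreier root stacks unchanged.

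Now define the global Artin--Schreier root stacks $\X_{i}:=\wp_{m_{i}}^{-1}((\widetilde L_{i},\widetilde s_{i},\widetilde f_{i})/\P^{1})$ and form the fibre product
\[
\X':=\X_{1}\times_{\P^{1}}\X_{2}\times_{\P^{1}}\cdots\times_{\P^{1}}\X_{n}.
\]
Each $\X_{i}$ is trivial (étale over $\P^{1}$) away from $x_{i}$, so $\X'$ has stacky locus exactly $\{x_{1},\ldots,x_{n}\}$ with prescribed local Artin--Schreier root stack structure matching that of $\X$. The universal property of the root stack construction produces a morphism $\X\to\X'$ over $\P^{1}$: on $\Zz_{i}$ the map to $\X_{i}$ is the isomorphism just arranged, and to $\X_{j}$ for $j\ne i$ it factors through the étale chart since $x_{i}$ is not stacky in $\X_{j}$. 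Checking that $\X\to\X'$ is an isomorphism is an étale-local question: over the complement of the stacky locus, both sides are the coarse open $\P^{1}\setminus\{x_{1},\ldots,x_{n}\}$; at each $x_{i}$, every other $\X_{j}$ contributes only an étale neighborhood so the fibre product locally reduces to $\X_{i}\cong\Zz_{i}$.

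The main obstacle is the global correction of $f_{i}$: the Artin--Schreier data near $x_{i}$ determines the ramification filtration there, but a naive rational extension to $\P^{1}$ may introduce unwanted Artin--Schreier poles elsewhere that would create spurious stacky points in $\X_{i}$. Overcoming this requires both the isomorphism-invariance of $\wp_{m}^{-1}((L,s,f)/X)$ under the substitution $f\mapsto f+\wp(g)s^{m}$ established in the construction of Section~\ref{sec:ASrootstacks}, and the explicit partial-fraction structure on $\P^{1}$, which lets me kill contributions at each unwanted point independently. Once this clearing is achieved, the fibre product identification is formal from the local Artin--Schreier root stack structure guaranteed by Theorem~\ref{thm:locallyASrootstack}(1).
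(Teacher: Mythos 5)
Your overall architecture---realize $\X$ as a fibre product over $\P^{1}$ of one-point global Artin--Schreier root stacks, one for each stacky point, and then compare with $\X$ \'{e}tale-locally---is the same as the paper's, which forms $[Y_{1}/(\Z/p\Z)]\times_{\P^{1}}\cdots\times_{\P^{1}}[Y_{r}/(\Z/p\Z)]$ with $Y_{i}$ the Artin--Schreier cover $y^{p}-y=F_{i}(t)$ for a rational function $F_{i}$ with a pole of order $m_{i}$ only at $x_{i}$, and identifies each factor with a global Artin--Schreier root stack via Example~\ref{ex:ASrootstackP1}. However, your key globalization step has a genuine gap. You extend the local section $f_{i}$ to a rational section of $\widetilde L_{i}^{\otimes m_{i}}$ and then claim the unwanted poles can be cleared by substitutions $f_{i}\mapsto f_{i}+\wp(g_{i})\widetilde s_{i}^{m_{i}}$ because the partial fraction decomposition on $\P^{1}$ splits a rational function into ``Artin--Schreier pieces.'' That is false: such a move changes $F=f_{i}/\widetilde s_{i}^{m_{i}}$ by $\wp(g_{i})=g_{i}^{p}-g_{i}$, and at a point $z\neq x_{i}$ a polar part of order prime to $p$ can never be cancelled this way (if $g_{i}$ has a pole of order $a$ at $z$ then $\wp(g_{i})$ has a pole of order $pa$; if $g_{i}$ is regular at $z$ the polar part is untouched)---only polar parts already lying in the image of $\wp$ can be removed. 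So an unlucky extension of $f_{i}$ saddles $\X_{i}$ with spurious stacky points, and the fibre product is then not isomorphic to $\X$. Moreover, the invariance of $\wp_{m}^{-1}((L,s,f)/X)$ under $f\mapsto f+\wp(g)s^{m}$ is not actually established in the paper as a citable statement (and for $g$ with poles the divisor data $(L,s)$ changes as well), so it cannot carry the weight you put on it.

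The repair is simpler and is what the paper does: do not transport $f_{i}$ at all. Since the complete local structure at an order-$p$ stacky point is determined by the ramification jump alone (after a change of formal coordinates one may take $g=1$ in $y^{p}-y=t^{-m}g$; this is the normal form underlying Theorem~\ref{thm:pcoverfactorsthroughASrootstack} and part (1) of Theorem~\ref{thm:locallyASrootstack}), it suffices to pick \emph{any} $F_{i}\in k(t)$ with a pole of exact order $m_{i}$ at $x_{i}$ and no other poles---possible on $\P^{1}$ because $k(\P^{1})=k(t)$, and this is precisely where the hypothesis $X=\P^{1}$ enters---and take the factor to be $[Y_{i}/(\Z/p\Z)]\cong\wp_{m_{i}}^{-1}((L_{i},s_{i},f_{i})/\P^{1})$ for the corresponding triple. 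With that replacement, your construction of the comparison map $\X\rightarrow\X'$ over $\P^{1}$ and the \'{e}tale-local verification that it is an isomorphism is fine in spirit and parallels the paper's $T$-point argument.
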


If the coarse space of $\X$ is not $\P^{1}$ however, then Theorem~\ref{thm:3} fails in general (see Example~\ref{ex:nonP1cex}). In fact, anytime the genus of the coarse space is at least $1$, there will be obstructions to sections $f$ inducing a global Artin--Schreier root stack structure on $\X$. 

The paper is organized as follows. In Sections~\ref{sec:stackycurves} and~\ref{sec:divisorsbundles}, we recall the definition and basic properties of stacky curves. A key feature of tame stacky curves is that they are locally isomorphic to a root stack; we review this construction, originally due to Cadman \cite{cad} and, independently, to Abramovich--Graber--Vistoli \cite{agv}, in Section~\ref{sec:kummerrootstack}. In Section~\ref{sec:ASrootstacks}, we define Artin--Schreier root stacks, describe their basic properties and use them to prove Theorems~\ref{thm:1}, \ref{thm:2} and~\ref{thm:3}. Finally, in Section~\ref{sec:canrings}, we compute the canonical divisor and canonical ring of some stacky curves in characteristic $p > 0$ and in Section~\ref{sec:future} we describe a potential application of these computations to the theory of modular forms in positive characteristic. 

The explicit definition of Artin--Schreier root stacks arose from email correspondence between the author and David Rydh, based on the latter's work with Andrew Kresch in 2010 on wild ramification in stacks, so the author would like to thank David for these many enlightening conversations, along with his suggestions for improvement of the original draft. The author is grateful to Vaidehee Thatte for suggesting Lemma~\ref{lem:ASinteqn}, and to the following people for numerous helpful comments: John Berman, Tom Mark, Soumya Sankar, John Voight and David Zureick-Brown. Finally, the author would like to thank his adviser, Andrew Obus, for his help editing this paper, which formed part of the author's doctoral thesis.


\section{Connections to Other Works}

It is known (cf.~\cite[11.3.1]{ols}) that under some mild hypotheses, a Deligne--Mumford stack is \'{e}tale-locally a quotient stack by the stabilizer of a point. More generally, the sequence of papers \cite{aov}, \cite{alp10}, \cite{alp13}, \cite{alp14}, \cite{ahr1}, \cite{ahr2} establishes a structure theory for (tame) algebraic stacks which says that a large class of algebraic stacks are \'{e}tale-locally a quotient stack of the form $[\Spec A/G_{x}]$ where $G_{x}$ is a linearly reductive stabilizer of a point $x\in |\X|$. The present article can be regarded as a small first step in the direction of a structure theory for wild stacks, although it is of a different flavor than the above papers. Our approach is more akin to the one taken in \cite{gs}. 

Our structure theory in Section~\ref{sec:ASrootstacks} also parallels the approach to wild ramification in formal orbifolds and parabolic bundles taken in \cite{kp}, \cite{kma} and \cite{kum}. There, the authors define a {\it formal orbifold} by specifying a smooth projective curve over an algebraically closed field together with a {\it branch data} abstractly representing the ramification data present in our construction. This allows one to relate formal orbifolds and, more importantly, a suitable notion of {\it orbifold bundle} on a formal orbifold, to the more classical notions of {\it parabolic covers} of curves and {\it parabolic bundles} (cf.~\cite{ms}). Formal orbifolds admit a Riemann--Hurwitz formula (\cite[Thm. 2.20]{kp}) analogous to Theorem~\ref{prop:wildstackyRH} and can be studied combinatorially as we did in Section~\ref{sec:ASrootstacks}. Moreover, they shed light on the \'{e}tale fundamental group of curves in arbitrary characteristic (cf.~\cite[Thm. 2.40]{kp} or \cite[Thm. 1.1]{kum}). The perspective we take is more of an ``organic'' algebro-geometric view of wildly ramified stacky curves which comes naturally out of the classification problem for Deligne--Mumford stacks in dimension $1$ discussed in Section~\ref{sec:intro}.


\section{Stacky Curves}
\label{sec:stackycurves}

Fix a base scheme $S$ and let $\X$ be a category fibred in groupoids over $\cat{Sch}_{S}$, equipped with a Grothendieck topology (usually the \'{e}tale topology). Every scheme $X\in\cat{Sch}_{S}$ is canonically identified with its functor of points $T\mapsto X(T) = \Hom_{S}(T,X)$ by the Yoneda embedding, which in turn can be identified with a category fibred in groupoids $X\rightarrow\cat{Sch}_{S}$ (cf.~\cite[3.2.2]{ols}: ``the $2$-Yoneda lemma''). We say $\X$ is: 
\begin{itemize}
\item a {\bf stack} if for every object $U\in\cat{Sch}_{S}$ and every covering $\{U_{i}\rightarrow U\}$, the induced morphism $\X(U)\rightarrow\X(\{U_{i}\rightarrow U\})$ is an equivalence of categories;

\item an {\bf algebraic stack} if in addition, the diagonal $\Delta : \X\rightarrow\X\times_{S}\X$ is representable and there is a smooth surjection $U\rightarrow\X$ where $U$ is a scheme;

\item a {\bf Deligne--Mumford stack} if it is algebraic and there exists an \'{e}tale surjection $U\rightarrow\X$ where $U$ is a scheme;

\item in the case when $S = \Spec k$, a {\bf tame stack} if the orders of its (necessarily finite) stabilizer groups are coprime to $\char k$; otherwise, $\X$ is said to be a {\bf wild stack};

\item a {\bf stacky curve} if it is a smooth, separated, connected, one-dimensional Deligne--Mumford stack which is generically a scheme, i.e.~there exists an open subscheme $U$ of the coarse moduli space $X$ of $\X$ such that the induced map $\X\times_{X}U\rightarrow U$ is an isomorphism. 
\end{itemize}

In the literature there are equivalent definitions of algebraic stacks in terms of groupoids (\cite[Tags 0CWR and 026O]{sp}) and groupoid fibrations (\cite[Def.~1.148]{beh}). Further, the definitions of algebraic and Deligne--Mumford stacks only require $U\rightarrow\X$ (or equivalently the diagonal $\Delta_{\X}$) to be representable by an algebraic space (cf.~\cite{ols}), but the distinction will not play a role in the present article since we deal only with Deligne--Mumford stacks. 

The set of points of a stack $\X$, denoted $|\X|$, is defined to be the set of equivalence classes of morphisms $x : \Spec k\rightarrow\X$, where $k$ is a field, and where two points $x : \Spec k\rightarrow\X$ and $x' : \Spec k'\rightarrow\X$ are equivalent if there exists a field $L\supseteq k,k'$ such that the diagram 
\begin{center}
\begin{tikzpicture}[scale=1.5]
  \node at (0,0) (a) {$\Spec L$};
  \node at (1,1) (b1) {$\Spec k$};
  \node at (1,-1) (b2) {$\Spec k'$};
  \node at (2,0) (c) {$\X$};
  \draw[->] (a) -- (b1);
  \draw[->] (a) -- (b2);
  \draw[->] (b1) -- (c) node[above,pos=.6] {$x$};
  \draw[->] (b2) -- (c) node[above,pos=.5] {$x'$};
\end{tikzpicture}
\end{center}
commutes. Then the {\bf stabilizer group} of a point $x\in |\X|$ is taken to be the pullback $G_{x}$ in the following diagram: 
\begin{center}
\begin{tikzpicture}[scale=2]
  \node at (0,1) (a) {$G_{x}$};
  \node at (1,1) (b) {$\Spec k$};
  \node at (0,0) (c) {$\X$};
  \node at (1,0) (d) {$\X\times_{S}\X$};
  \draw[->] (a) -- (b);
  \draw[->] (a) -- (c);
  \draw[->] (b) -- (d) node[right,pos=.5] {$(x,x)$};
  \draw[->] (c) -- (d) node[above,pos=.5] {$\Delta_{\X}$};
\end{tikzpicture}
\end{center}
As in scheme theory, a geometric point is a point $\bar{x} : \Spec k\rightarrow\X$ where $k$ is algebraically closed. 

\begin{ex}
Let $X$ be a scheme over a field $k$. Then $X$ is a Deligne--Mumford stack with trivial stabilizers everywhere. 
\end{ex}

\begin{ex}
\label{ex:quotientstack}
Suppose $G\leq\Aut(X)$ is a group (a smooth group scheme over $k$) acting on a smooth, quasi-projective scheme $X$. Define the {\it quotient stack} $[X/G]$ to be the category over $\cat{Sch}_{k}$ whose objects are triples $(T,P,\pi)$, where $T\in\cat{Sch}_{k}$, $P$ is a $G\times_{k}T$-torsor for the \'{e}tale site $T_{\et}$ and $\pi : P\rightarrow X\times_{k}T$ is a $G\times_{k}T$-equivariant morphism. Morphisms $(T',P',\pi')\rightarrow (T,P,\pi)$ in $[X/G]$ are given by compatible morphisms of $k$-schemes $\varphi : T'\rightarrow T$ and $G\times_{k}T'$-torsors $\psi : P'\rightarrow \varphi^{*}P$ such that $\varphi^{*}\pi\circ\psi = \pi'$. 
\end{ex}

\begin{prop}
If $G$ is finite, then $[X/G]$ is a Deligne--Mumford stack with \'{e}tale morphism $X\rightarrow [X/G]$ and coarse moduli space $[X/G]\rightarrow X/G$. More generally, $[X/G]$ is an algebraic stack which is Deligne--Mumford if and only if for every geometric point $\bar{x} : \Spec k\rightarrow [X/G]$, the stabilizer $G_{\bar{x}}$ is an \'{e}tale group scheme over $k$. 
\end{prop}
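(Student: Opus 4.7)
First, I would establish that $[X/G]$ is an algebraic stack for any smooth group scheme $G$. The stack axioms follow from étale descent for $G$-torsors and equivariant morphisms, since both are local-to-global data. The diagonal $\Delta:[X/G]\to [X/G]\times_k [X/G]$ is representable: for objects $(T,P,\pi),(T,P',\pi')$, the sheaf of $G$-equivariant torsor isomorphisms $P\to P'$ compatible with $\pi,\pi'$ sits as a locally closed subscheme inside the (representable) Isom scheme of torsors, using that $X$ is quasi-projective and $G$ is affine. A smooth atlas is supplied by $X\to[X/G]$, which classifies the trivial torsor $G\times X\to X$ with its action map; it is smooth because $G\to\Spec k$ is smooth, and surjective because every $G$-torsor trivializes étale-locally.

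Next, for $G$ finite, being smooth of relative dimension $0$ forces $G$ to be étale over $k$. Any base change of $X\to[X/G]$ along a morphism from a scheme is a $G$-torsor and hence étale, so $X\to[X/G]$ is an étale atlas, exhibiting $[X/G]$ as a Deligne--Mumford stack. For the coarse space, the scheme-theoretic quotient $X/G$ exists as a quasi-projective variety by the classical theorem on quotients of quasi-projective schemes by finite group actions. Its universal property as a categorical $G$-quotient of $X$ then translates into the coarse-space universal property for $[X/G]$: any morphism $[X/G]\to W$ to a scheme pulls back along $X\to[X/G]$ to a $G$-invariant map $X\to W$, which factors uniquely through $X/G$, and the induced map on geometric points $|[X/G]|\to (X/G)(\bar k)$ is a bijection.

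For the general DM criterion I would invoke the standard fact that an algebraic stack is Deligne--Mumford iff its diagonal is unramified. The geometric fiber of $\Delta$ over $(\bar x,\bar x)$ is precisely the stabilizer group scheme $G_{\bar x}$, and for group schemes of finite type ``unramified'' coincides with ``étale''. So the stated equivalence reduces to the geometric-fiber criterion for unramifiedness of $\Delta$. The forward direction is immediate: an étale atlas forces $\Delta$ to be unramified, hence each $G_{\bar x}$ is étale. The main technical obstacle is the converse: producing an étale atlas from the smooth atlas $X\to[X/G]$ under the étale-stabilizer hypothesis. This is handled by the general theorem that any algebraic stack with unramified diagonal is Deligne--Mumford, whose proof refines the smooth atlas by slicing along the fibers of its stabilizer (equivalently, along the relative identity component) to obtain an étale cover.
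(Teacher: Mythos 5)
Your outline is correct, and it is essentially the standard argument: the paper itself gives no independent proof here, simply citing \cite[8.1.12 and 8.4.2]{ols}, and your sketch reproduces the content behind those citations (smooth atlas $X\rightarrow [X/G]$ from the trivial torsor, representability of the diagonal via Isom-schemes of torsors, finiteness plus smoothness of $G$ forcing $G$ to be \'{e}tale so that the atlas is \'{e}tale, the classical quasi-projective quotient $X/G$ supplying the coarse space, and the unramified-diagonal criterion for Deligne--Mumford). A few small refinements would tighten it: since $X$ is quasi-projective, hence separated, compatibility with $\pi,\pi'$ is a closed (not merely locally closed) condition on the Isom-scheme; the coarse moduli property should be verified against morphisms to algebraic spaces, not just schemes, together with the bijection on geometric points that you note; and when reducing the Deligne--Mumford criterion to stabilizers, the fibers of $\Delta$ over a pair $(\bar{x},\bar{y})$ of distinct but isomorphic geometric points are torsors under $G_{\bar{x}}$ rather than the group itself, which is harmless since a torsor under an \'{e}tale group scheme is \'{e}tale, and one should also record that an unramified group scheme of finite type over a field is automatically \'{e}tale (vanishing of the cotangent space at the identity plus translation). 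None of these affect the structure of your argument; the only genuinely nontrivial input, that an algebraic stack with unramified diagonal is Deligne--Mumford, is correctly identified and appropriately quoted rather than reproved, exactly as the sources cited by the paper do.
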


\begin{proof}
See \cite[8.1.12 and 8.4.2]{ols}. 
\end{proof}

\begin{lem}
\label{lem:locallyquotientstack}
Let $\X$ be a stacky curve over a field $k$ with coarse moduli space $\pi : \X\rightarrow X$ and fix a geometric point $\bar{x} : \Spec k\rightarrow\X$ with stabilizer $G_{\bar{x}}$. Then there is an \'{e}tale neighborhood $U\rightarrow X$ of $x := \pi\circ\bar{x}$ and a finite morphism of schemes $V\rightarrow U$ such that $G_{\bar{x}}$ acts on $V$ and $\X\times_{X}U \cong [V/G_{\bar{x}}]$ as stacks. 
\end{lem}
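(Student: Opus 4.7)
The plan is to reduce this to the general local structure theorem for separated Deligne--Mumford stacks with finite inertia, as in \cite[11.3.1]{ols}. First I would verify the hypotheses: by definition of a stacky curve, $\X$ is a separated Deligne--Mumford stack of finite type over $k$, the geometric stabilizer $G_{\bar{x}}$ is a finite \'{e}tale $k$-group scheme (since $\Delta_{\X}$ is unramified, the fiber of $\Delta_{\X}$ over $(\bar{x},\bar{x})$ being a finite étale group scheme), and $\pi : \X \to X$ is the Keel--Mori coarse moduli space, which is proper and a homeomorphism on underlying topological spaces. These are precisely the hypotheses needed to apply the general local structure result.

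Next I would produce an \'{e}tale slice through $\bar{x}$. Pick any \'{e}tale atlas $W \to \X$ from a scheme, which exists because $\X$ is Deligne--Mumford. The pullback $W_{0} = W \times_{\X} \Spec k$ along $\bar{x}$ is a finite \'{e}tale $k$-scheme on which $G_{\bar{x}}$ acts freely by permuting lifts of $\bar{x}$. Restricting $W$ to an \'{e}tale neighborhood of one chosen point of $W_{0}$ yields a pointed \'{e}tale morphism $V \to \X$, and by averaging over the $G_{\bar{x}}$-orbit (using the $2$-automorphisms given by elements of $G_{\bar{x}}$) one obtains a canonical factorization $V \to [V/G_{\bar{x}}] \to \X$.

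The heart of the argument, and where I expect the main obstacle, is showing that $[V/G_{\bar{x}}] \to \X$ becomes an isomorphism over some \'{e}tale neighborhood $U$ of $x$ in $X$. This descent step uses henselization of $X$ at $x$ together with a lifting argument for $G_{\bar{x}}$-equivariant \'{e}tale neighborhoods of $\bar{x}$: morally, the only way $V$ could fail to realize the whole local structure is if the $G_{\bar{x}}$-action failed to extend across nearby geometric points, but the finite \'{e}tale nature of the inertia stack forces such an extension after restricting to a sufficiently small $U$. Once this is established, the induced map $V \to U$ factors through $[V/G_{\bar{x}}] \cong \X \times_{X} U$, and it is automatically finite because it is a geometric quotient by the finite group $G_{\bar{x}}$, completing the proof.
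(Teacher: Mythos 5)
Your proposal is correct and follows essentially the same route as the paper, which simply invokes the local structure theorem for separated Deligne--Mumford stacks with finite inertia from \cite[11.3.1]{ols}; your verification of the hypotheses and sketch of how that theorem is proved are consistent with this. Nothing further is needed beyond the citation, as the paper itself gives no additional argument.
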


\begin{proof}
This is \cite[11.3.1]{ols}.
\end{proof}

This says that that every stacky curve is, \'{e}tale locally, a quotient stack $[U/G]$ for $U$ a scheme and $G = G_{\bar{x}}$ the stabilizer of a geometric point (and thus a constant group scheme). By ramification theory (cf.~\cite[Ch.~IV]{ser}), if $\X$ is a tame stacky curve then every stabilizer group of $\X$ is cyclic. Consequently, with some hypotheses (cf. \cite{gs}) a tame stacky curve can be described completely by specifying its coarse moduli space and a finite list of numbers corresponding to the finitely many points with nontrivial stabilizers of those orders. In contrast, if $\X$ is wild, it may have higher ramification data and even nonabelian stabilizers. The main goal of this article is to describe how wild stacky curves can still be classified. 

Let $\X$ be a stacky curve, $\bar{x} : \Spec k\rightarrow\X$ a geometric point with image $x$ and stabilizer $G_{x}$ and let $\mathcal{G}_{x}\rightarrow X$ be the residue gerbe at $x$, i.e.~the unique monomorphism from a gerbe $\mathcal{G}$ with $|\mathcal{G}|$ a point through which $\bar{x}$ factors. We say $x$ is a {\it stacky point} of $\X$ if $G_{x}\not = 1$. As a substack of $\X$, this $\mathcal{G}_{x}$ may be regarded as a ``fractional point'', in the sense that $\deg\mathcal{G}_{x} = \frac{1}{|G_{x}|}$. 

The following technical lemma will be useful for constructing isomorphisms of stacks in later sections. 

\begin{lem}
\label{lem:CFGequiv}
If $F : \X\rightarrow\Y$ is a functor between categories fibred in groupoids over $\cat{Sch}_{S}$, then $F$ is an equivalence of categories fibred in groupoids if and only if for each $S$-scheme $T$, the functor $F_{T} : \X(T)\rightarrow\Y(T)$ is an equivalence of categories. 
\end{lem}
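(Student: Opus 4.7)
The forward direction is routine: if $F$ is an equivalence with quasi-inverse $G:\Y\to\X$ and natural isomorphisms $\eta:\mathrm{id}_\X\xrightarrow{\sim}GF$, $\epsilon:FG\xrightarrow{\sim}\mathrm{id}_\Y$, then because all of $F$, $G$, $\eta$, $\epsilon$ are compatible with the projections to $\cat{Sch}_S$, their restrictions to the fibre over any $T$ yield a quasi-inverse $G_T$ and natural isomorphisms exhibiting $F_T$ as an equivalence of categories.

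For the converse, suppose each $F_T$ is an equivalence. I will show that $F$ is fully faithful and essentially surjective, which suffices. Essential surjectivity is immediate: any object $y\in\Y$ lies over some $S$-scheme $T$, and essential surjectivity of $F_T$ produces $x\in\X(T)$ with $F(x)\cong y$ in $\Y(T)$, hence in $\Y$. For fullness and faithfulness, the key reduction is to the fibre categories, and here I use the fact that in a category fibred in groupoids every morphism is cartesian (since all morphisms in the fibres are isomorphisms). Consequently, $F$ automatically preserves cartesian arrows, so cartesian factorizations in $\X$ map to cartesian factorizations in $\Y$.

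Concretely, given objects $x_1, x_2 \in \X$ lying over $T_1, T_2$ and a morphism $\alpha:x_1\to x_2$ in $\X$ covering some $f:T_1\to T_2$, choose a cartesian lift $\tilde f:f^*x_2\to x_2$; then $\alpha$ factors uniquely as $\tilde f\circ\alpha'$ with $\alpha'\in\X(T_1)(x_1,f^*x_2)$. To prove faithfulness, if $\alpha,\beta:x_1\to x_2$ satisfy $F(\alpha)=F(\beta)$, their projections agree, so both lie over the same $f$, and cancelling the cartesian arrow $F(\tilde f)$ in $\Y$ yields $F_{T_1}(\alpha')=F_{T_1}(\beta')$; faithfulness of $F_{T_1}$ gives $\alpha'=\beta'$ and hence $\alpha=\beta$. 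For fullness, given $\gamma:F(x_1)\to F(x_2)$ over $f$, the cartesianness of $F(\tilde f)$ produces a unique $\gamma':F(x_1)\to F(f^*x_2)$ in $\Y(T_1)$ with $\gamma=F(\tilde f)\circ\gamma'$; fullness of $F_{T_1}$ lifts $\gamma'$ to some $\alpha'\in\X(T_1)$, and then $\alpha:=\tilde f\circ\alpha'$ satisfies $F(\alpha)=\gamma$. The only mild subtlety, and the one point to handle with care, is the bookkeeping that ensures all morphisms in sight lie over the same base morphism so that the fibrewise hypothesis actually applies; the observation that every arrow in a CFG is cartesian makes this bookkeeping automatic.
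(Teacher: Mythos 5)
Your argument is correct, but it takes a different route from the paper only in the sense that the paper offers no argument at all: Lemma~\ref{lem:CFGequiv} is proved there by citing \cite[Tag 003Z]{sp}, whereas you have unpacked essentially the standard proof of that citation. Your reduction is the right one: since every arrow in a category fibred in groupoids is (strongly) cartesian, any arrow over $f\colon T_{1}\to T_{2}$ factors uniquely as a vertical arrow followed by a chosen lift of $f$, and this cartesian bookkeeping correctly transfers faithfulness and fullness of the fibre functors $F_{T}$ to $F$ itself; essential surjectivity with \emph{vertical} witnessing isomorphisms comes straight from essential surjectivity of $F_{T}$. The one place you wave your hands is the closing phrase ``which suffices'': an equivalence of categories fibred in groupoids means an equivalence in the 2-category over $\cat{Sch}_{S}$, i.e.\ a quasi-inverse $G$ that is itself base-preserving together with natural isomorphisms whose components are vertical, and the generic ``fully faithful plus essentially surjective implies equivalence'' statement only hands you a quasi-inverse of the underlying category. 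The gap is easily closed with the ingredients you already have: because your essential-surjectivity isomorphisms $\epsilon_{y}\colon F(x_{y})\xrightarrow{\sim} y$ lie in the fibres, defining $G(y)=x_{y}$ and, for $\phi\colon y\to y'$ over $f$, letting $G(\phi)$ be the unique arrow with $F(G(\phi))=\epsilon_{y'}^{-1}\circ\phi\circ\epsilon_{y}$ (using the full faithfulness you just established) produces a functor over $\cat{Sch}_{S}$ with vertical natural isomorphisms $FG\cong\mathrm{id}$ and $GF\cong\mathrm{id}$. With that sentence added, your proof is a complete, self-contained replacement for the citation; what the paper's approach buys is brevity, while yours makes visible exactly where the groupoid-fibration axioms (every arrow cartesian, unique vertical factorization) are used.
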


\begin{proof}
This is a special case of~\cite[Tag 003Z]{sp}. 
\end{proof}

Recall that for a locally noetherian scheme $X$ over $S$, the {\it relative normalization} of $X$ is an $S$-scheme $X^{\nu}$ together with an $S$-morphism $X^{\nu}\rightarrow X$ uniquely determined by the following properties: 
\begin{enumerate}[\quad (1)]
  \item $X^{\nu}\rightarrow X$ is integral, surjective and induces a bijection on irreducible components. 
  \item $X^{\nu}\rightarrow X$ is terminal among morphisms of $S$-schemes $Z\rightarrow X$ where $Z$ is normal. 
\end{enumerate}
Equivalently, $X^{\nu}$ is the normalization of $X$ in its total ring of fractions (cf.~\cite[Tag 035E]{sp}). 

\begin{lem}[{\cite[Tag 07TD]{sp}}]
If $Y\rightarrow X$ is a smooth morphism of locally noetherian $S$-schemes and $X^{\nu}$ is the relative normalization of $X$, then $X^{\nu}\times_{X}Y$ is the relative normalization of $Y$. 
\end{lem}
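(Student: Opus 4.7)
The plan is to verify that the projection $\pi : X^{\nu}\times_{X}Y\rightarrow Y$ satisfies the two characterizing properties of the relative normalization of $Y$ listed just before the lemma. Property (1) asks for integrality, surjectivity, and a bijection on irreducible components; property (2) asks for the appropriate universal property with respect to normal schemes.

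For property (1), integrality and surjectivity are preserved under arbitrary base change, so they transfer from $X^{\nu}\rightarrow X$ to $\pi$ immediately. The bijection on irreducible components is where smoothness enters: since $Y\rightarrow X$ is flat, every generic point of $Y$ maps to a generic point of $X$, and each component of $Y$ lying over a given component of $X$ pulls back to a single component of $X^{\nu}\times_{X}Y$ because $X^{\nu}\rightarrow X$ is already a bijection on generic points. Making this precise requires localizing on affine patches of $X$ and $Y$ and tracking generic points through the base change; this is the most delicate step of the argument.

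The crucial geometric input for property (2) is that $X^{\nu}\times_{X}Y$ is itself normal. The second projection $X^{\nu}\times_{X}Y\rightarrow X^{\nu}$ is obtained by base change from the smooth morphism $Y\rightarrow X$, hence is smooth; since smoothness preserves normality over a normal base (both Serre conditions $R_{1}$ and $S_{2}$ ascend along smooth morphisms), the total space $X^{\nu}\times_{X}Y$ is normal. Once this is known, the universal property is formal: given any $S$-morphism $Z\rightarrow Y$ with $Z$ normal, the composite $Z\rightarrow Y\rightarrow X$ factors uniquely through $X^{\nu}$ by the universal property defining relative normalization, and the universal property of the fibre product then produces a unique lift $Z\rightarrow X^{\nu}\times_{X}Y$ compatible with the projections to both $X^{\nu}$ and $Y$. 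The main obstacle is the local verification of the bijection on irreducible components in step (1); everything else reduces to standard stability properties of smooth morphisms and the formal definition of $X^{\nu}$.
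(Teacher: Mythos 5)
The paper gives no argument of its own here; it simply cites \cite[Tag 07TD]{sp}, whose proof runs through the algebraic statement that formation of integral closure commutes with smooth base change, applied to the quasi-coherent algebra defining $X^{\nu}$. Your route --- verifying directly that $X^{\nu}\times_{X}Y\rightarrow Y$ satisfies the two characterizing properties, with normality of the fibre product supplied by ascent of normality along the smooth projection to $X^{\nu}$ --- is genuinely different and is viable; the normality step is indeed where smoothness does its real work. Two caveats on the parts you treat as routine. First, Serre's criterion ($R_{1}+S_{2}$) presupposes locally noetherian rings, and $X^{\nu}$ need not be locally noetherian even when $X$ is; it is safer to invoke ``smooth over a normal base is normal'' via the \'{e}tale-local structure of smooth morphisms (\'{e}tale over affine space over $X^{\nu}$). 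Second, property (2) must be read as in the stack version \cite[Lem.~A.5]{ab}, i.e.\ terminal among \emph{normal} $Z\rightarrow X$ which are \emph{dominant on irreducible components}; as literally quoted for schemes, (1) and (2) would also be satisfied by $\mathrm{id}_{X}$. Your factorization argument survives this correction --- if $Z\rightarrow Y$ is dominant on components then so is $Z\rightarrow Y\rightarrow X$, because $Y\rightarrow X$ is flat --- and it is precisely the normality of $X^{\nu}\times_{X}Y$ that makes it an object of the category in which you claim terminality, so that part of your plan hangs together once stated carefully.

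The genuine gap is the step you flag yourself: the bijection on irreducible components. The reason you offer --- that $X^{\nu}\rightarrow X$ is a bijection on generic points --- is not sufficient: an integral surjection that is bijective on generic points but induces a nontrivial residue field extension at a generic point can split into several components after base change along $Y\rightarrow X$. What you must use is the stronger fact that the normalization is an isomorphism over the generic points, i.e.\ its fibre over each generic point $\xi$ of $X$ is $\Spec\kappa(\xi)$; then the fibre of $X^{\nu}\times_{X}Y\rightarrow Y$ over each generic point $\eta$ of $Y$ (which lies over a generic $\xi$, by flatness of $Y\rightarrow X$) is the single point $\Spec\kappa(\eta)$. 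One must then identify these points with the maximal points of $X^{\nu}\times_{X}Y$: a point lying over a maximal point of $Y$ with one-point fibre admits no proper generalization; conversely, a maximal point of the fibre product maps to a maximal point of $X^{\nu}$ because the second projection is flat (generalizations lift), hence is a maximal point of the fibre $Y_{\xi}$, and a maximal point of $Y_{\xi}$ with $\xi$ maximal in $X$ is maximal in $Y$. With an argument of this shape the ``delicate step'' closes; as written, it remains an assertion, and the hint given for it points at the wrong invariant.
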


As a consequence we can extend the definition of normal/normalization to an algebraic stack. 

\begin{defn}
Let $\X$ be a locally noetherian algebraic stack over $S$. Then $\X$ is {\bf normal} if there is a smooth presentation $U\rightarrow\X$ where $U$ is a normal scheme. The {\bf relative normalization} of $\X$ is an algebraic stack $\X^{\nu}$ and a representable morphism of stacks $\X^{\nu}\rightarrow\X$ such that for any smooth morphism $U\rightarrow\X$ where $U$ is a scheme, $U\times_{\X}\X^{\nu}$ is the relative normalization of $U\rightarrow S$. 
\end{defn}

\begin{lem}[{\cite[Lem.~A.5]{ab}}]
For a locally noetherian algebraic stack $\X$, the relative normalization $\X^{\nu}$ is uniquely determined by the following two properties: 
\begin{enumerate}[\quad (1)]
  \item $\X^{\nu}\rightarrow\X$ is an integral surjection which induces a bijection on irreducible components. 
  \item $\X^{\nu}\rightarrow\X$ is terminal among morphisms of algebraic stacks $\Zz\rightarrow\X$, where $\Zz$ is normal, which are dominant on irreducible components. 
\end{enumerate}
\end{lem}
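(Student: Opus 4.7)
The plan is to derive both properties from the corresponding universal property for schemes, using a smooth presentation $U \to \X$ together with descent. First I would observe that $\X^\nu$ is automatically normal, since by definition the base change $U \times_\X \X^\nu$ equals the scheme-level normalization $U^\nu$, which is a normal scheme furnishing a smooth cover of $\X^\nu$. For property (1), the map $\X^\nu \times_\X U = U^\nu \to U$ is integral, surjective, and bijective on irreducible components. Integrality and surjectivity can be checked smooth-locally on the target, so they transfer to $\X^\nu \to \X$. For the bijection on irreducible components I would use that the irreducible components of an algebraic stack correspond to orbits of irreducible components of $U$ under the groupoid $R := U \times_\X U \rightrightarrows U$; the preceding lemma applied to each smooth projection identifies $R \times_\X \X^\nu$ with $R^\nu$, so the bijection survives passing to the quotient.

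For property (2), given $\Zz \to \X$ with $\Zz$ normal and dominant on irreducible components, I would form $U_\Zz := U \times_\X \Zz$. Since $U_\Zz \to \Zz$ is smooth, $U_\Zz$ is normal, and $U_\Zz \to U$ is dominant on irreducible components. The scheme-level terminal property then yields a unique morphism $U_\Zz \to U^\nu = U \times_\X \X^\nu$ over $U$. The remaining step is descent: the preceding lemma applied to each projection $R \to U$ identifies $R \times_\X \X^\nu$ with the scheme-level normalization of $R$, so the uniqueness of the factorization on $R$ forces compatibility of the two pullbacks with the groupoid structure. Effective descent for morphisms of stacks then produces a unique morphism $\Zz \to \X^\nu$ over $\X$.

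For the uniqueness characterization, given any $\Y \to \X$ satisfying (1) and (2), applying (2) of $\Y$ to $\Zz = \X^\nu$ (normal and dominant on components, as just shown) produces a morphism $\X^\nu \to \Y$ over $\X$; symmetrically, (2) of $\X^\nu$ yields $\Y \to \X^\nu$. The uniqueness clauses force both compositions to be the identity, giving the canonical isomorphism. The main obstacle I anticipate is the descent step in property (2): ensuring that the scheme-level factorization $U_\Zz \to U^\nu$ actually glues to a morphism of stacks. The preceding lemma is exactly what powers this, identifying relative normalizations under smooth base change so that scheme-level uniqueness forces agreement of the two pullbacks along the groupoid projections, thereby enabling descent.
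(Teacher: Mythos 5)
First, note that the paper itself does not prove this lemma at all---it is imported verbatim from \cite[Lem.~A.5]{ab}---so there is no in-text argument to compare against; your strategy of reducing to the scheme case along a smooth presentation $U\rightarrow\X$ and descending is exactly the expected one. Most of your sketch is sound: $\X^{\nu}$ is normal because $U\times_{\X}\X^{\nu}=U^{\nu}$ is a normal scheme covering it smoothly; ``integral'' and ``surjective'' are smooth-local (indeed fppf-local) on the target; and identifying irreducible components of $\X$ (resp.\ $\X^{\nu}$) with orbits of components of $U$ (resp.\ $U^{\nu}$) under $R=U\times_{\X}U$ (resp.\ $R^{\nu}$), using that normalization commutes with the smooth projections $R\rightrightarrows U$, gives the bijection in (1).

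The genuine gap is in your treatment of (2): you apply ``the scheme-level terminal property'' to $U_{\Zz}=U\times_{\X}\Zz$, but $\Zz$ is an arbitrary algebraic stack and $\Zz\rightarrow\X$ need not be representable, so $U_{\Zz}$ is in general an algebraic stack, not a scheme; even after presenting $\Zz$ by a smooth cover $W\rightarrow\Zz$ with $W$ a normal scheme, the fibre product $W\times_{\X}U$ is only an algebraic space (the same issue affects $R$ and $R^{\nu}$ in your argument for (1)). So the universal property you invoke is not yet available at that point: you need a bootstrap, first extending the scheme statement to algebraic spaces via \'etale presentations, and then running your descent with $W\times_{\X}U$ in place of $U_{\Zz}$. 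The descent itself is actually simpler than you suggest: since $\X^{\nu}\rightarrow\X$ is representable, lifts of a fixed morphism $\Zz\rightarrow\X$ through $\X^{\nu}$ form a sheaf on $\Zz$, so no $2$-categorical descent data is needed---local existence plus the space-level uniqueness over $W$ and $W\times_{\Zz}W$ suffices. Finally, in your uniqueness step the map $\Y\rightarrow\X^{\nu}$ obtained from terminality of $\X^{\nu}$ requires $\Y$ to be normal, which does not follow from (1) alone (for instance $\Spec k[x]/(x^{2})\rightarrow\Spec k$ is an integral surjection, bijective on components, and receives a unique map from every normal scheme); one must read ``terminal among \ldots'' as asserting that the object itself lies in the class of normal, dominant-on-components stacks over $\X$, after which your two-way argument is the standard uniqueness of a terminal object.
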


For more properties of relative normalizations of stacks, see \cite[Appendix A]{ab}. 

\begin{defn}
\label{defn:normalpullback}
Let $\X,\Y$ and $\Zz$ be algebraic stacks and suppose there are morphisms $\Y\rightarrow\X$ and $\Zz\rightarrow\X$. Define the {\bf normalized pullback} $\Y\times_{\X}^{\nu}\Zz$ to be the relative normalization of the fibre product $\Y\times_{\X}\Zz$. 
\end{defn}

We will write the normalized pullback as a diagram 
\begin{center}
\begin{tikzpicture}[scale=2]
  \node at (0,1) (a) {$\Y\times_{\X}^{\nu}\Zz$};
  \node at (1,1) (b) {$\Zz$};
  \node at (0,0) (c) {$\Y$};
  \node at (1,0) (d) {$\X$};
  \draw[->] (a) -- (b);
  \draw[->] (a) -- (c);
  \draw[->] (b) -- (d);
  \draw[->] (c) -- (d);
  \node at (.3,.7) {$\nu$};
  \draw (.2,.6) -- (.4,.6) -- (.4,.8);
\end{tikzpicture}
\end{center}


\section{Divisors and Vector Bundles}
\label{sec:divisorsbundles}

Let $\X$ be a normal Deligne--Mumford stack with coarse space $X$. As in scheme theory, we make the following definitions: 
\begin{itemize}
\item An {\bf irreducible (Weil) divisor} on $\X$ is an irreducible, closed substack of $\X$ of codimension $1$. 

\item The {\bf (Weil) divisor group} of $\X$, denoted $\Div\X$, is the free abelian on the irreducible divisors of $\X$; its elements are called {\bf (Weil) divisors} on $\X$. 

\item A {\bf principal (Weil) divisor} on $\X$ is a divisor $\div(f)$ associated to a morphism $f : \X\rightarrow\P_{k}^{1}$ (equivalently, a rational section $f$ of $\orb_{\X}$) given by 
$$
\div(f) = \sum_{Z} v_{Z}(f)Z
$$
where $v_{Z}(f)$ is the valuation of $f$ in the local ring $\orb_{X,Z}$. 

\item Two divisors $D,D'\in\Div\X$ are {\bf linearly equivalent} if $D = D' + \div(f)$ for some morphism $f : \X\rightarrow\P_{k}^{1}$. 

\item The subgroup of principal divisors in $\Div\X$ is denoted $\PDiv\X$. The {\bf divisor class group} of $\X$ is the quotient group $\Cl(\X) = \Div\X/\PDiv\X$. 
\end{itemize}

\begin{lem}
\label{lem:prindivcoarse}
Let $\X$ be a stacky curve over $k$ with coarse space morphism $\pi : \X\rightarrow X$. Then for any nonconstant map $f : \X\rightarrow\P_{k}^{1}$, $\div(f) = \pi^{*}\div(f')$, where $f' : X\rightarrow\P_{k}^{1}$ is the unique map making the diagram 
\begin{center}
\begin{tikzpicture}[scale=2]
  \node at (-.7,1) (a) {$\X$};
  \node at (.7,1) (b) {$X$};
  \node at (0,0) (c) {$\P_{k}^{1}$};
  \draw[->] (a) -- (b) node[above,pos=.5] {$\pi$};
  \draw[->] (a) -- (c) node[left,pos=.5] {$f$};
  \draw[->,dashed] (b) -- (c) node[right,pos=.5] {$f'$};
\end{tikzpicture}
\end{center}
commute. 
\end{lem}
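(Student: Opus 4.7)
The plan is first to establish the existence of $f'$. Since $\P_{k}^{1}$ is a scheme, viewed as a stack with trivial stabilizers, the universal property of the coarse moduli space $\pi : \X \to X$ supplies a unique morphism $f' : X \to \P_{k}^{1}$ with $f = f'\circ\pi$. With $f'$ in hand, I will verify the divisor identity locally at each irreducible divisor of $\X$. Away from the finitely many stacky points, $\pi$ restricts to an isomorphism, so $\div(f)$ and $\pi^{*}\div(f')$ agree there tautologically; the only real work is to match their coefficients at each residue gerbe $\mathcal{G}_{x}$, which I plan to do by descending to an \'{e}tale-local quotient chart provided by Lemma~\ref{lem:locallyquotientstack}.

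Fixing a stacky point $x$ with stabilizer $G := G_{\bar x}$, Lemma~\ref{lem:locallyquotientstack} furnishes an \'{e}tale neighborhood $U\to X$ of $x$ and a finite $G$-scheme $V\to U$ with $\X\times_{X}U \cong [V/G]$ and coarse space $V/G = U$. A rational section of $\orb_{\X}$ on this chart corresponds to a $G$-invariant rational function $\tilde f$ on $V$, and by the uniqueness of $f'$ this same $\tilde f$ represents $f'$ on $U$. After shrinking $U$, I may assume $v\in V$ is the unique preimage of $x$; then the extension $\orb_{U,x}\hookrightarrow\orb_{V,v}$ has degree $|G|$, with trivial residue extension (since $v$ is $G$-fixed, $G$ equals both its decomposition and inertia subgroups) and hence ramification index $|G|$. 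Picking local parameters $s$ at $x$ and $t$ at $v$, one obtains $s = u\,t^{|G|}$ for some unit $u$, valid even in the wild case. Writing $n = v_{x}(f')$, substitution yields $v_{v}(\tilde f) = |G|\cdot n$.

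Identifying $\mathcal{G}_{x}$ with the prime divisor on $\X$ whose pullback to $V$ via the smooth presentation $V\to[V/G]$ is the point $[v]$, the coefficient of $\mathcal{G}_{x}$ in $\div(f)$ is therefore $v_{\mathcal{G}_{x}}(f) = v_{v}(\tilde f) = |G|\cdot v_{x}(f')$. On the other side, $\pi^{*}[x]$ is the divisor on $\X$ pulling back to $|G|\cdot[v]$ on $V$, so $\pi^{*}[x] = |G|\cdot\mathcal{G}_{x}$ (consistent with the degree formula $\deg\mathcal{G}_{x} = 1/|G|$); hence the coefficient of $\mathcal{G}_{x}$ in $\pi^{*}\div(f')$ is also $|G|\cdot v_{x}(f')$. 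Summing over all stacky points and combining with the schematic locus finishes the argument. The main obstacle is bookkeeping at the stacky points: pinning down the pullback formula $\pi^{*}[x] = |G|\cdot\mathcal{G}_{x}$ and verifying that the divisor equality descends from the \'{e}tale chart $V$ back to $\X$. The ramification calculation itself is entirely standard, because it uses only the ramification index $|G|$ and sidesteps any higher ramification data that would otherwise complicate the wild case.
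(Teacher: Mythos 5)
Your argument is correct in substance, and it is worth noting that the paper gives no argument of its own here: its proof is a citation to \cite[Lem.~5.4.4]{vzb}, and the computation behind that citation is essentially the one you carry out (reduce to the local quotient chart $[V/G_{\bar{x}}]$ of Lemma~\ref{lem:locallyquotientstack}, observe that $f$ pulls back to the invariant function $\tilde{f} = f'\circ (V\to U)$, and compare valuations at $v$ on the \'{e}tale presentation $V\to [V/G_{\bar{x}}]$). So you have supplied the local computation that the citation hides, and your use of the universal property of the coarse space for the existence and uniqueness of $f'$ matches the intended reading.

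One justification needs repair. You assert the residue extension at $v$ is trivial ``since $v$ is $G$-fixed, $G$ equals both its decomposition and inertia subgroups.'' Fixing $v$ only makes $G$ the decomposition group; a fixed point can have inertia a proper subgroup when $G$ acts nontrivially on $\kappa(v)$, which happens as soon as residue fields are not algebraically closed. What rescues the claim is that the group in Lemma~\ref{lem:locallyquotientstack} is $G_{\bar{x}}$, the stabilizer of a \emph{geometric} point of $\X$, and under $\X\times_{X}U\cong [V/G_{\bar{x}}]$ the stabilizer of the geometric point over $x$ is precisely the inertia group $I_{v}$; hence $I_{v} = G_{\bar{x}}$. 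It is this fact, not a shrinking of $U$, that makes $v$ the unique preimage of $x$ (the fibre of $V\to U = V/G_{\bar{x}}$ over $x$ is one orbit with full stabilizer) and, for $k$ perfect so that $\kappa(v)/\kappa(x)$ is separable, gives $e = |G_{\bar{x}}|$. In fact you can sidestep the identification $e = |G_{\bar{x}}|$ entirely: the coefficient of $\mathcal{G}_{x}$ in $\div(f)$ is $v_{v}(\tilde{f}) = e\cdot v_{x}(f')$, while the coefficient of $\mathcal{G}_{x}$ in $\pi^{*}\div(f')$ is $v_{v}(\pi^{*}s)\cdot v_{x}(f') = e\cdot v_{x}(f')$ for a local parameter $s$ at $x$, with the \emph{same} ramification index $e$ of $\orb_{U,x}\hookrightarrow\orb_{V,v}$ on both sides; so the identity $\div(f) = \pi^{*}\div(f')$ holds without knowing $e$ explicitly, which makes the argument robust in the wild case and independent of any residue-field subtleties.
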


\begin{proof}
See \cite[5.4.4]{vzb}. 
\end{proof}

For a scheme $X$, recall that a {\it rational divisor} on $X$ is a formal sum $E = \sum_{Z\subset X} r_{Z}Z$ where $r_{Z}\in\Q$ for each irreducible divisor $Z\subset X$. Denote the abelian group of rational divisors on $X$ by $\Q\Div X$. 

\begin{prop}
\label{prop:Qdivcorresp}
Let $\X$ be a stacky curve with stacky points $P_{1},\ldots,P_{n}$ whose stabilizers have orders $m_{1},\ldots,m_{n}$, respectively, and let $X$ be the coarse space of $\X$. Then there is a one-to-one correspondence between divisors $D\in\Div\X$ and rational divisors $E = \sum_{P} r_{P}P\in\Q\Div X$ such that $m_{i}r_{P_{i}}\in\Z$ for each $1\leq i\leq n$. 
\end{prop}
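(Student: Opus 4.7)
The plan is to build the correspondence explicitly by matching irreducible Weil divisors on $\X$ with closed points of $X$ through the residue gerbe construction. First I would note that since $\X$ is a smooth, one-dimensional Deligne--Mumford stack, every irreducible closed substack of codimension one is supported on a single closed point, and is cut out as the residue gerbe $\mathcal{G}_{x}$ of that point $x$. Likewise, $X$ is a smooth curve, so irreducible divisors on $X$ are just its closed points. Under $\pi : \X \to X$, the residue gerbes on $\X$ are in natural bijection with closed points of $X$: non-stacky points pull back to ordinary scheme points, while each stacky point $P_i$ is covered by the residue gerbe $\mathcal{G}_{P_i}$.

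Next I would define the map $\Phi : \Div\X \to \Q\Div X$ by extending linearly the rule
\[
\Phi([\mathcal{G}_{P_i}]) = \tfrac{1}{m_i} P_i \quad \text{and} \quad \Phi([Q]) = [Q] \text{ for each non-stacky closed point } Q.
\]
A general divisor $D = \sum_i a_i [\mathcal{G}_{P_i}] + \sum_j b_j [Q_j] \in \Div\X$ then maps to $E = \sum_i \tfrac{a_i}{m_i} P_i + \sum_j b_j Q_j$, and the coefficient $r_{P_i} = a_i/m_i$ clearly satisfies $m_i r_{P_i} = a_i \in \Z$, so the image lies in the stated subgroup of $\Q\Div X$.

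For surjectivity and injectivity, I would construct the inverse $\Psi$ sending each $\frac{1}{m_i}P_i$ to $[\mathcal{G}_{P_i}]$ and each non-stacky $Q$ to $[Q]$; the integrality condition $m_i r_{P_i} \in \Z$ guarantees that when we apply $\Psi$ to an arbitrary such $E$, every gerbe $\mathcal{G}_{P_i}$ appears with integer multiplicity, producing a genuine element of $\Div\X$. To justify the factor of $1/m_i$ at a stacky point, I would invoke Lemma~\ref{lem:locallyquotientstack}: étale-locally around $P_i$, $\X$ is a quotient $[V/G_{\bar{P}_i}]$ with $|G_{\bar{P}_i}| = m_i$, and the preimage of the point $P_i \in X$ in $V$ is a single $G_{\bar{P}_i}$-orbit, so the pullback of $P_i$ along $\pi$ equals $m_i [\mathcal{G}_{P_i}]$ as divisors on $\X$. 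This identifies $\Phi$ and $\Psi$ as mutually inverse group homomorphisms.

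The main technical step is the identification of irreducible divisors on $\X$ with residue gerbes at closed points, together with the local pullback computation $\pi^{*}[P_i] = m_i[\mathcal{G}_{P_i}]$. Once this is in hand, the correspondence and the integrality condition fall out essentially by bookkeeping, and compatibility with linear equivalence (not needed here, but useful downstream) follows from Lemma~\ref{lem:prindivcoarse}.
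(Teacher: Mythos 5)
Your argument is correct, and it is worth noting that the paper itself offers no argument here: its ``proof'' is a citation to \cite[Thm.~1.187]{beh}. So what you have done is supply the standard direct argument that the citation encapsulates, and it is the right one: irreducible divisors on a stacky curve are the residue gerbes at closed points, $|\X|\rightarrow|X|$ is a bijection for the coarse map, and both sides are free abelian groups on generating sets that you match up, with the integrality condition $m_{i}r_{P_{i}}\in\Z$ recording exactly which rational multiples of the $P_{i}$ are hit. The choice of normalization $\mathcal{G}_{P_{i}}\mapsto\frac{1}{m_{i}}P_{i}$ is not forced by the mere existence of a bijection, but it is the one the paper uses downstream (it makes the correspondence compatible with $\deg\mathcal{G}_{P_{i}} = \frac{1}{m_{i}}$, with Lemma~\ref{lem:floor}, and with $\div(f)=\pi^{*}\div(f')$ from Lemma~\ref{lem:prindivcoarse}), so justifying it via $\pi^{*}P_{i}=m_{i}\mathcal{G}_{P_{i}}$ is a good instinct. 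One small imprecision: ``the preimage is a single orbit'' does not by itself give multiplicity $m_{i}$; what you need is that the ramification index of $V\rightarrow V/G_{\bar{P}_{i}}$ at the point over $P_{i}$ equals $|G_{\bar{P}_{i}}|$, which follows from $efg=|G_{\bar{P}_{i}}|$ with trivial residue extension and a one-point orbit ($f=g=1$) --- and this holds in the wild case too, where the scheme-theoretic fibre is non-reduced rather than an orbit of $m_{i}$ distinct points. Since that identity is only needed to pin down the normalization and not for the bijection itself, this is a cosmetic rather than a substantive gap.
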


\begin{proof}
This is \cite[Thm. 1.187]{beh}. 
\end{proof}

\begin{defn}
The {\bf degree} of a divisor $D = \sum_{P} n_{P}P$ on a stacky curve is the formal sum $\deg(D) = \sum_{P} n_{P}\deg P$, where $\deg P = \deg\mathcal{G}_{P} = \frac{1}{|G_{P}|}$ is the degree of the residue gerbe $\mathcal{G}_{P}$ at $P$. 
\end{defn}

\begin{rem}
The degree of a divisor on a higher dimensional normal Deligne--Mumford stack is also defined, though we will not need it here. In general, a divisor on $\X$ need not have integer degree or coefficients, as the example below shows. However, Lemma~\ref{lem:prindivcoarse} shows that for a morphism $f : \X\rightarrow\P_{k}^{1}$ (i.e.~a rational section of $\orb_{\X}$), the principal divisor $\div(f)$ always does. 
\end{rem}

\begin{ex}
Let $n\geq 1$ and consider the quotient stack $\X = [\A_{\C}^{1}/\mu_{n}]$, where $\mu_{n}$ is the finite group scheme of $n$th roots of unity acting on $\A_{\C}^{1}$ by multiplication. Then $\X$ has coarse space $X = \A_{\C}^{1}/\mu_{n} = \A_{\C}^{1}$; let $\pi : \X\rightarrow X$ be the coarse map. Consider the morphism of sheaves $\pi^{*}\Omega_{\X}\rightarrow\Omega_{\X}$, where $\Omega_{\X}$ is the sheaf of differentials (defined below). Here, $\Omega_{\A_{\C}^{1}}$ is freely generated by $dt$ where $\A_{\C}^{1} = \Spec\C[t]$. Let $z$ be a coordinate on the upstairs copy of $\A_{\C}^{1}$, so that the cover $\A_{\C}^{1}\rightarrow [\A_{\C}^{1}/\mu_{n}]\rightarrow \A_{\C}^{1}$ is given by $t\mapsto z^{n}$. Since $\A_{\C}^{1}\rightarrow [\A_{\C}^{1}/\mu_{n}]$ is \'{e}tale, we can identify $\Omega_{\X}$ with $\Omega_{\A_{\C}^{1}}$ generated by $dz$. Then the section $s$ of $\Omega_{\X}\otimes\pi^{*}\Omega_{X}^{\vee}$ given by $\pi^{*}dt\mapsto d(z^{n}) = nz^{n - 1}\, dt$ has divisor $\div(s) = (n - 1)P$, where $P$ is the stacky point over the origin in $\A_{\C}^{1}$. Thus $\deg(\div(s)) = (n - 1)\deg(P) = \frac{n - 1}{n}$, so divisors of sections of nontrivial line bundles can have non-integer coefficients. 
\end{ex}

\begin{defn}
A {\bf Cartier divisor} on $\X$ is a Weil divisor which is (\'{e}tale-)locally of the form $(f)$ for a rational section $f$ of $\orb_{\X}$. 
\end{defn}

\begin{lem}
If $\X$ is a smooth Deligne--Mumford stack, then every Weil divisor is a Cartier divisor. 
\end{lem}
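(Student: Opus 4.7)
The plan is to reduce to the scheme case via an \'{e}tale atlas, invoking the classical fact that on a regular scheme every Weil divisor is Cartier because regular local rings are UFDs (Auslander--Buchsbaum--Nagata). Throughout, recall that $\X$ smooth Deligne--Mumford means there is an \'{e}tale surjection $p : U\rightarrow\X$ with $U$ a smooth (hence regular) scheme.

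First, I would fix a Weil divisor $D\in\Div\X$ and an arbitrary geometric point $\bar{x} : \Spec k\rightarrow\X$. Choose an \'{e}tale neighborhood $V\rightarrow\X$ of $\bar{x}$ with $V$ a smooth affine scheme admitting a lift $v\in V$ of $\bar{x}$; such a $V$ exists since we can pull $U\rightarrow\X$ back along $\bar{x}$ and pick an affine Zariski open containing the chosen preimage. The pullback $D_{V}$ of $D$ along the flat morphism $V\rightarrow\X$ is a Weil divisor on $V$: flatness together with the codimension-preserving property of \'{e}tale morphisms guarantees that the preimage of each irreducible codimension-one closed substack of $\X$ is a (finite) union of irreducible codimension-one closed subschemes of $V$, and the resulting pullback map $\Div\X\rightarrow\Div V$ is a well-defined group homomorphism.

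Next, I would apply the scheme-theoretic result on $V$: since $\orb_{V,v}$ is a regular local ring, it is a UFD, so the irreducible components of $D_{V}$ passing through $v$ are each cut out by a single element of $\orb_{V,v}$. Multiplying these local equations (with appropriate multiplicities) yields a rational function $f\in\Frac(\orb_{V,v})^{*}$ with $D_{V} = \div(f)$ in some Zariski open neighborhood of $v$; after shrinking $V$ further we may assume this equality holds on all of $V$. Because $V\rightarrow\X$ is \'{e}tale, $\orb_{\X}|_{V} = \orb_{V}$, so $f$ is tautologically a rational section of $\orb_{\X}$ on the \'{e}tale site restricted to the neighborhood $V$. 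By the definition of Cartier divisor given just above the lemma, this exhibits $D$ as \'{e}tale-locally principal in a neighborhood of $\bar{x}$. Since $\bar{x}$ was arbitrary, $D$ is Cartier.

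The main obstacle, and essentially the only nontrivial step, is verifying that pullback of Weil divisors along the flat \'{e}tale map $V\rightarrow\X$ is well-behaved enough to make the local computation on $V$ translate back into the desired local statement on $\X$. This requires checking the interplay between the cycle-theoretic pullback and the definition of $\Div\X$ as the free abelian group on irreducible closed substacks of codimension one; once one knows that an irreducible codimension-one closed substack of $\X$ restricts to a (possibly reducible) effective Cartier divisor on any smooth \'{e}tale chart, the rest of the argument is a direct invocation of the smooth scheme case.
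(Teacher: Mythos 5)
Your proof is correct and takes essentially the same route as the paper, whose entire proof is to pass to a smooth presentation and invoke the chart-level statement (cited there as \cite[Lem.~3.1]{gs}). You simply spell out that chart-level step directly, using that regular local rings are UFDs and then descending the local equation along the \'{e}tale neighborhood, which matches the paper's definition of Cartier as \'{e}tale-locally principal.
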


\begin{proof}
Pass to a smooth presentation and use~\cite[Lem.~3.1]{gs}. 
\end{proof}

\begin{lem}
\label{lem:linebundleOD}
Every line bundle $L$ on a stacky curve $\X$ is isomorphic to $\orb_{\X}(D)$ for some divisor $D\in\Div(\X)$. Moreover, $\orb_{\X}(D) \cong \orb_{\X}(D')$ if and only if $D$ and $D'$ are linearly equivalent. 
\end{lem}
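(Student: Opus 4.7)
The plan is to mirror the classical scheme-theoretic correspondence between line bundle isomorphism classes and linear equivalence classes of Cartier divisors, leveraging the preceding lemma which identifies Weil and Cartier divisors on the smooth Deligne--Mumford stack $\X$. The overall strategy for the existence half is: produce a nonzero rational section of $L$, take its divisor of zeros and poles, and check that the associated sheaf recovers $L$; for the uniqueness half, run the standard argument that a global isomorphism between $\orb_{\X}(D)$ and $\orb_{\X}(D')$ is the same data as a rational section of $\orb_{\X}$ witnessing the linear equivalence.

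First, I would show that every line bundle $L$ on $\X$ admits a nonzero rational section. Because $\X$ is generically a scheme, there is a dense open subscheme $U\subseteq X$ such that $\X\times_{X}U\to U$ is an isomorphism, so $L$ restricts to a line bundle on the scheme $U$. Shrinking $U$ if necessary, this restriction is trivial and admits a nowhere-vanishing section, which we promote to a rational section $s$ of $L$ on $\X$. Given such an $s$, define the Weil divisor $D=\div(s)=\sum_{Z}v_{Z}(s)\,Z$, where the sum runs over codimension-one irreducible closed substacks $Z\subseteq\X$ and $v_{Z}(s)$ is computed by choosing an \'{e}tale-local trivialization of $L$ near the generic point of $Z$ and then taking the valuation of the resulting rational function in the DVR $\orb_{\X,Z}$. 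The value $v_{Z}(s)$ is independent of the chosen trivialization because any two differ by a unit, and only finitely many $v_{Z}(s)$ are nonzero since $s$ is regular and nonvanishing on $\X\times_{X}U$.

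Next, I would produce the isomorphism $\orb_{\X}(D)\cong L$. By the preceding lemma, $D$ is Cartier, so it is cut out \'{e}tale-locally by a single equation $g$; the section $s/g$ is then a local trivialization of $L$, and the family of these local trivializations, compared against the canonical local trivializations of $\orb_{\X}(D)$, glues (via \'{e}tale descent for line bundles and their morphisms) into the desired global isomorphism. For the linear equivalence assertion: if $D-D'=\div(f)$ for some morphism $f:\X\to\P^{1}_{k}$, then multiplication by $f$ sends the canonical rational section $1_{D}$ of $\orb_{\X}(D)$ to a rational section of $\orb_{\X}(D')$ with the same divisor as $1_{D'}$, inducing an isomorphism $\orb_{\X}(D)\xrightarrow{\sim}\orb_{\X}(D')$. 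Conversely, any such isomorphism $\varphi$ must send $1_{D}$ to $f\cdot 1_{D'}$ for some rational section $f$ of $\orb_{\X}$, and taking divisors on both sides of $\varphi(1_{D})=f\cdot 1_{D'}$ recovers $D=D'+\div(f)$.

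The main technical hurdle is ensuring that each of the ingredients — the valuation $v_{Z}$, the local equations of $D$, the local trivializations of $L$, and the gluing — is genuinely well-defined on the stack rather than only on a chosen atlas. This is where the hypothesis that $\X$ is a smooth Deligne--Mumford stack is essential: an \'{e}tale presentation $V\to\X$ by a scheme pulls codimension-one irreducible substacks back to codimension-one subschemes and reduces each check to the scheme-theoretic statement, while the previous Weil-equals-Cartier lemma guarantees that all divisors in sight admit the local equations needed to perform the gluing. Once this bookkeeping is set up, the argument is essentially the same as the proof for curves in, e.g., Hartshorne II.6.
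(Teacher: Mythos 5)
Your proposal is correct, and it is essentially the same argument as the one the paper relies on: the paper gives no internal proof but simply cites \cite[5.4.5]{vzb}, whose proof likewise produces a rational section using that $\X$ is generically a scheme, takes its divisor, and runs the classical Cartier-divisor/linear-equivalence bookkeeping on an \'{e}tale presentation. Your reconstruction of that argument, including the use of the Weil-equals-Cartier lemma and the descent of the local trivializations, is sound as written.
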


\begin{proof}
This is \cite[5.4.5]{vzb}. 
\end{proof}

\begin{rem}
Lemma~\ref{lem:linebundleOD} says that for a stacky curve $\X$, the homomorphism $\Cl(\X)\rightarrow\Pic(\X),D\mapsto\orb_{\X}(D)$ is an isomorphism. This isomorphism holds more generally for locally factorial schemes which are reduced (cf. \cite[Tag 0BE9]{sp}), and it should similarly hold for reduced, locally factorial stacks, though we were not able to find a reference. 
\end{rem}

\begin{lem}
\label{lem:floor}
Let $\X$ be a Deligne--Mumford stack and $D\in\Div\X$. Then there is an isomorphism of sheaves (on $X$)
$$
\pi_{*}\orb_{\X}(D) \cong \orb_{X}(\lfloor D\rfloor)
$$
where $\lfloor D\rfloor$ is the floor divisor obtained by rounding down all $\Q$-coefficients of $D$, considered as an integral divisor on $X$. 
\end{lem}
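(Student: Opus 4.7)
The plan is to reduce to an \'{e}tale-local statement. By Lemma~\ref{lem:locallyquotientstack}, I may assume $\X \cong [V/G]$ with $V = \Spec B$ a scheme and $G$ the stabilizer of a geometric point $\bar{x}$; away from the stacky locus $\pi$ is an isomorphism and $D$ has integer coefficients, so nothing is to prove there and I focus on a neighborhood of a stacky point $P$ with stabilizer of order $m$. Localizing further, I may take $B$ to be a strictly Henselian DVR with uniformizer $z$ and $G$ acting with unique closed fixed point $P_V$. Setting $A = B^G$, so that $X = \Spec A$ locally, the extension $A \subseteq B$ is totally ramified of degree $m$: for a uniformizer $t$ of $A$ there is a unit $u \in B$ with $t = u z^m$, and $v_B(a) = m \, v_A(a)$ for every $a \in \Frac A$.

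Next I would use the equivalence between quasi-coherent sheaves on $\X = [V/G]$ and $G$-equivariant quasi-coherent sheaves on $V$. Under this equivalence the coarse-space pushforward is given by $G$-invariants, $\pi_* \mathcal{F} = (p_* \widetilde{\mathcal{F}})^G$, where $p : V \to X$ is the quotient map and $\widetilde{\mathcal{F}}$ is the $G$-equivariant sheaf on $V$ corresponding to $\mathcal{F}$. Writing $D = a[P]$ near $P$ with $a \in \Z$, its pullback to $V$ is the honest divisor $a[P_V]$, so $\orb_\X(D)$ corresponds to the fractional $B$-ideal $z^{-a}B \subseteq \Frac B$. The key calculation is then
\[
(z^{-a}B)^G = \{f \in \Frac A : v_B(f) \geq -a\} = \{f \in \Frac A : v_A(f) \geq -a/m\} = t^{-\lfloor a/m \rfloor}A,
\]
using $\lceil -a/m \rceil = -\lfloor a/m \rfloor$. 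By Proposition~\ref{prop:Qdivcorresp}, the divisor $a[P] \in \Div \X$ corresponds to the rational divisor $(a/m)[x] \in \Q\Div X$ with $x = \pi(P)$, whose floor is $\lfloor a/m \rfloor [x]$, matching the right-hand side.

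These local identifications are canonical in $D$, so they glue to a global isomorphism $\pi_* \orb_\X(D) \cong \orb_X(\lfloor D \rfloor)$. I expect the main obstacle to be purely a bookkeeping one: to align the multiplicity conventions on $\X$, on $V$, and on $X$ carefully enough that the exponent of $z$ in the local fractional ideal is the integer $a$ coming from $D = a[P]$ on $\X$, while the coefficient at $\pi(P)$ of the associated rational divisor on $X$ comes out to $a/m$. Once this is sorted out, the floor appears for the elementary reason that extracting the $A$-submodule of $G$-invariants inside $z^{-a}B$ rounds the fractional valuation bound $-a/m$ up to $-\lfloor a/m \rfloor$.
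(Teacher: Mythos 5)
Your argument is correct, but it proceeds along a genuinely different route than the paper's proof. The paper never passes to the local quotient presentation: it defines the map globally by sending $f\in\orb_{X}(\lfloor D\rfloor)(U)$ to $f\circ p_{2}$, where $p_{2} : \X\times_{X}U\rightarrow U$ is the projection, checks well-definedness using Lemma~\ref{lem:prindivcoarse} ($\div(f\circ p_{2}) = \pi^{*}\div(f)$) together with the observation that taking floors does not change effectivity, and obtains surjectivity from the universal property of the coarse moduli map (any morphism $\X\times_{X}U\rightarrow\P^{1}$ factors through $U$). You instead reduce via Lemma~\ref{lem:locallyquotientstack} to $\X\cong[V/G]$, invoke the dictionary between quasi-coherent sheaves on $[V/G]$ and $G$-equivariant sheaves on $V$ with coarse pushforward computed as $G$-invariants, and carry out the valuation computation $(z^{-a}B)^{G} = t^{-\lfloor a/m\rfloor}A$ in a strictly Henselian DVR, matching it against Proposition~\ref{prop:Qdivcorresp}. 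Your version makes the origin of the floor completely transparent (rounding of the bound $-a/m$ under a totally ramified degree-$m$ extension, valid uniformly in the tame and wild cases) and does not need Lemma~\ref{lem:prindivcoarse}; the cost is that you must import the equivariant-sheaf and invariants-as-pushforward facts, justify the reduction to strict henselizations, and handle the multiplicity bookkeeping you flag, plus say explicitly that both sides sit inside the pushforward of the sheaf of rational functions so that the local equalities of submodules glue to the asserted global isomorphism rather than merely giving pointwise abstract isomorphisms. The paper's proof is shorter and stays at the level of rational sections; yours is more structural and arguably explains better why the statement is insensitive to wildness.
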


\begin{proof}
For an \'{e}tale morphism $U\rightarrow X$, define the map 
\begin{align*}
  \orb_{X}(\lfloor D\rfloor)(U) &\longrightarrow \pi_{*}\orb_{\X}(D)(U)\\
    f &\longmapsto f\circ p_{2}
\end{align*}
where $p_{2} : \X\times_{X}U\rightarrow U$ is the canonical projection. A section $f\in\orb_{X}(\lfloor D\rfloor)(U)$ is a rational function on $\X$ satisfying $\lfloor D\rfloor + \div(f) \geq 0$, or equivalently, $D + \div(f\circ p_{2}) \geq 0$. Indeed, by Lemma~\ref{lem:prindivcoarse}, $\pi^{*}\div(f) = \div(f\circ p_{2})$ and the floor function does not change effectivity. Moreover, it is clear that $f\mapsto f\circ p_{2}$ is injective. For a map $g : \X\times_{X}U\rightarrow\P^{1}$, there is a map $f : U\rightarrow\P^{1}$ making the diagram 
\begin{center}
\begin{tikzpicture}[scale=2]
  \node at (-.7,1) (a) {$\X\times_{X}U$};
  \node at (.7,1) (b) {$U$};
  \node at (0,0) (c) {$\P^{1}$};
  \draw[->] (a) -- (b) node[above,pos=.5] {$p_{2}$};
  \draw[->] (a) -- (c) node[left,pos=.5] {$g$};
  \draw[->] (b) -- (c) node[right,pos=.5] {$f$};
\end{tikzpicture}
\end{center}
commute by the universal property of $p_{2}$. Therefore $f\mapsto f\circ p_{2}$ is surjective, so we get the desired isomorphism $\orb_{X}(\lfloor D\rfloor) \xrightarrow{\sim} \pi_{*}\orb_{\X}(D)$. 
\end{proof}

For any morphism of Deligne--Mumford stacks $f : \X\rightarrow\Y$, define the {\it sheaf of relative differentials} $\Omega_{\X/\Y}$ to be the sheaf which takes an \'{e}tale map $U\rightarrow\X$ to $\Omega_{U/k}(U)$, the vector space of $k$-differentials over $U$. Set $\Omega_{\X} := \Omega_{\X/\Spec k}$. (Equivalently, $\Omega_{\X/\Y}$ can be defined as the sheafification of the presheaf $U\mapsto\Omega_{\orb_{\X}(U)/f^{-1}\orb_{\Y}(U)}$). 

\begin{lem}
For a morphism of Deligne--Mumford stacks $f : \X\rightarrow\Y$, let $\mathcal{I}\subseteq\orb_{\X\times_{\Y}\X}$ be the ideal sheaf corresponding to the diagonal morphism $\Delta : \X\rightarrow\X\times_{\Y}\X$. Then $\Omega_{\X/\Y} \cong \Delta^{*}\mathcal{I}/\mathcal{I}^{2}$. 
\end{lem}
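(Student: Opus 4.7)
The plan is to construct a canonical morphism $\Omega_{\X/\Y}\to\Delta^{*}\I/\I^{2}$ mirroring the classical scheme-theoretic construction, then verify it is an isomorphism étale-locally on $\X$. Both sides are quasi-coherent $\orb_{\X}$-modules, so it suffices to check that the map becomes an isomorphism after pulling back along an étale presentation of $\X$, at which point quasi-coherent descent finishes the job.

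First I would construct the map. The two projections $p_{1},p_{2}:\X\times_{\Y}\X\to\X$ agree after restriction along $\Delta$, so for any local section $f$ of $\orb_{\X}$ the difference $p_{1}^{*}f-p_{2}^{*}f$ is a local section of $\I$. Reducing modulo $\I^{2}$ and pulling back along $\Delta$ yields a map
\[
d:\orb_{\X}\longrightarrow\Delta^{*}\I/\I^{2},\qquad f\longmapsto\Delta^{*}\bigl(p_{1}^{*}f-p_{2}^{*}f\bigr)\bmod\I^{2},
\]
which one checks is an $f^{-1}\orb_{\Y}$-linear derivation. By the universal property of Kähler differentials, combined with the sheafification definition of $\Omega_{\X/\Y}$ given in the paper, this produces the desired canonical morphism $\psi:\Omega_{\X/\Y}\to\Delta^{*}\I/\I^{2}$.

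Next I would verify $\psi$ is an isomorphism by reducing to the scheme case. Choose étale surjections $V\to\Y$ and $U\to\X\times_{\Y}V$ by schemes, producing a morphism of schemes $g:U\to V$ covering $f$ and an étale morphism $p:U\to\X$. The classical identity for morphisms of schemes gives an isomorphism $\psi_{U/V}:\Omega_{U/V}\xrightarrow{\sim}\Delta_{U/V}^{*}\I_{U/V}/\I_{U/V}^{2}$. The identification $p^{*}\Omega_{\X/\Y}\cong\Omega_{U/V}$ follows from the sheafification definition together with the vanishing of $\Omega_{U/\X}$ for the étale morphism $p$. It remains to identify $p^{*}(\Delta^{*}\I/\I^{2})$ with $\Delta_{U/V}^{*}\I_{U/V}/\I_{U/V}^{2}$ in a way compatible with $\psi$.

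The hard part is this last base-change identification. The natural morphism $U\times_{V}U\to\X\times_{\Y}\X$ factors as an open immersion $U\times_{V}U\hookrightarrow U\times_{\Y}U$ (base change of the open immersion $\Delta_{V/\Y}$, which is open because $V\to\Y$ is étale) followed by an étale morphism $U\times_{\Y}U\to\X\times_{\Y}\X$. Tracking the ideal $\I$ through this factorization, and using that the preimage of $\Delta(\X)$ under an étale morphism coincides near the diagonal of $U$ with $\Delta_{U/V}(U)$ itself (since $p$ is étale, so $\Delta_{U/\X}$ is an open immersion), one obtains the required compatibility $p^{*}(\Delta^{*}\I/\I^{2})\cong\Delta_{U/V}^{*}\I_{U/V}/\I_{U/V}^{2}$. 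Once $p^{*}\psi$ is identified with $\psi_{U/V}$, which is an isomorphism, étale descent promotes this to the conclusion that $\psi$ itself is an isomorphism on $\X$.
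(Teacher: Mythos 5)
Your argument is correct, and it is exactly the ``standard'' argument the paper appeals to without writing it out (the paper's proof is literally the single word ``Standard''): construct the universal derivation $f\mapsto p_{1}^{*}f-p_{2}^{*}f \bmod \I^{2}$, then check the resulting map is an isomorphism after pulling back along an \'{e}tale presentation and invoke descent for quasi-coherent sheaves. Your \'{e}tale-local bookkeeping is also the right way to handle the only real subtlety here --- that for a Deligne--Mumford stack the diagonal is merely unramified rather than a monomorphism --- since the factorization through $U\times_{\X}U$, which contains $U$ as an open subscheme via $\Delta_{U/\X}$, together with flat base change for conormal sheaves and the open immersion $U\times_{V}U\hookrightarrow U\times_{\Y}U$, reduces everything to the classical scheme-theoretic statement; no gaps to report.
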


\begin{proof}
Standard. 
\end{proof}

%

\begin{cor}
Let $\X$ be a stacky curve over $k$. Then $\Omega_{\X}$ is a line bundle. 
\end{cor}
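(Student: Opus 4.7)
The plan is to reduce to the well-known scheme case via an étale presentation, using that ``being a line bundle'' is a local property on the smooth (indeed étale) site of an algebraic stack. Concretely, since $\X$ is Deligne--Mumford, I would pick an étale surjection $u : U \to \X$ with $U$ a scheme, and since $\X$ is smooth, separated, connected and one-dimensional, $U$ is a smooth $1$-dimensional $k$-scheme, i.e.\ (a disjoint union of) smooth curves over $k$.

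Next I would identify the pullback $u^{*}\Omega_{\X}$ with $\Omega_{U/k}$. This is the standard compatibility of the sheaf of differentials with étale (indeed smooth) base change: the defining presheaf $V \mapsto \Omega_{\orb_{\X}(V)/k}$ pulls back along an étale map to the analogous presheaf on $U$, because étale morphisms are formally unramified so the relative differentials $\Omega_{U/\X}$ vanish, and then the standard conormal/cotangent exact sequence
\[
u^{*}\Omega_{\X} \longrightarrow \Omega_{U/k} \longrightarrow \Omega_{U/\X} \longrightarrow 0
\]
combined with the smoothness of $u$ (which makes the left-hand map injective) gives $u^{*}\Omega_{\X} \cong \Omega_{U/k}$. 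On a smooth curve over $k$, $\Omega_{U/k}$ is a well-known invertible sheaf of rank one.

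Finally, I would invoke descent for the property ``locally free of rank one'': since $u : U \to \X$ is an étale (and hence fpqc) surjection and $u^{*}\Omega_{\X}$ is locally free of rank one on $U$, the sheaf $\Omega_{\X}$ is itself locally free of rank one on $\X$, i.e.\ a line bundle. The main (and essentially only) subtlety is justifying the identification $u^{*}\Omega_{\X} \cong \Omega_{U/k}$ carefully from the sheafification definition given just before the corollary; everything else is formal.
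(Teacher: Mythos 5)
Your proof is correct and follows essentially the same route as the paper: pull back along an \'{e}tale presentation $U\to\X$, identify $u^{*}\Omega_{\X}$ with $\Omega_{U/k}$ (which the paper's definition of $\Omega_{\X}$ essentially builds in), note that this is invertible on the smooth curve $U$, and conclude that $\Omega_{\X}$ is a line bundle. Your explicit descent step for ``locally free of rank one'' is just a more carefully justified version of the paper's closing sentence, so there is nothing to add.
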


\begin{proof}
For any \'{e}tale map $U\rightarrow\X$, $f^{*}\Omega_{\X}\cong\Omega_{U}$ is a line bundle. Moreover, $\Omega_{\X}$ is locally free and rank of locally free sheaves is preserved under $f^{*}$, so $\Omega_{\X}$ is itself a line bundle. 
\end{proof}

As a result, Lemma~\ref{lem:linebundleOD} shows that $\Omega_{\X} \cong \orb_{\X}(K_{\X})$ for some divisor $K_{\X}\in\Div\X$, called a {\it canonical divisor} of $\X$. Note that a canonical divisor is unique up to linear equivalence. 

\begin{prop}[Stacky Riemann--Hurwitz -- Tame Case]
\label{prop:stackyRH}
For a tame stacky curve $\X$ with coarse moduli space $\pi : \X\rightarrow X$, the formula 
$$
K_{\X} = \pi^{*}K_{X} + \sum_{x\in\X(k)} (|G_{x}| - 1)x
$$
defines a canonical divisor $K_{\X}$ on $\X$. 
\end{prop}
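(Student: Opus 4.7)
The strategy is to compare $\Omega_\X$ and $\pi^*\Omega_X$ via the natural pullback map and compute the divisor of their quotient. Concretely, the plan is to produce a rational section of the line bundle $\Omega_\X\otimes\pi^*\Omega_X^{\vee}$ which is nonvanishing away from the stacky locus and whose order at each stacky point of order $m$ is exactly $m-1$. Since $\pi^*K_X$ is a divisor representing $\pi^*\Omega_X$, this immediately identifies $K_\X$ with $\pi^*K_X + \sum_{x}(|G_x|-1)x$ up to linear equivalence. The computation in Example after Proposition~\ref{prop:Qdivcorresp} already carries out precisely this calculation for $[\A^1_{\C}/\mu_n]$, and the proof amounts to showing that the same local picture holds étale-locally at every stacky point of any tame stacky curve.

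Concretely, first I would note that for any étale morphism $V\to\X$, the pullback $\Omega_\X|_V\cong\Omega_V$ (since smoothness implies $\Omega$ pulls back nicely along étale maps), and the same holds for the coarse map at a non-stacky point (where $\pi$ is an isomorphism on an open neighborhood). Hence the natural map $\pi^*\Omega_X\to\Omega_\X$ is an isomorphism on the complement of the (finite) stacky locus, so the induced section of $\Omega_\X\otimes\pi^*\Omega_X^{\vee}$ is nonvanishing there. Next I would localize at a stacky point $x$ of order $m$: by Lemma~\ref{lem:locallyquotientstack} there is an étale neighborhood $U\to X$ of $x$ so that $\X\times_X U\cong [V/G_x]$ with $G_x\cong\Z/m\Z$; in the tame case $(m,\char k)=1$, so the cover $V\to U$ is Kummer, and we may choose coordinates $t$ on $U$ and $z$ on $V$ such that $t=z^m$. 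Then $\pi^*(dt)=mz^{m-1}\,dz$ in $\Omega_V=\Omega_{[V/G_x]}|_V$.

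Translating back to $\X$, the section $\pi^*dt\mapsto mz^{m-1}dz$ has divisor $(m-1)x$ locally on the stacky curve: since $z$ is a uniformizer at the stacky point (and $m$ is a unit in $k$), the factor $mz^{m-1}$ contributes valuation $m-1$ at $x$. Assembling these local contributions over the finitely many stacky points gives
$$
\Omega_\X\otimes\pi^*\Omega_X^{\vee}\;\cong\;\orb_\X\!\left(\sum_{x\in\X(k)}(|G_x|-1)\,x\right),
$$
which by Lemma~\ref{lem:linebundleOD} proves the formula up to linear equivalence.

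The main subtlety is the local computation at the stacky points: one must be careful that $z$ really is a uniformizer on the stack $[V/G_x]$ at $x$ (so that $v_x(z)=1$ in the stacky sense) and that $v_x(m)=0$, which is precisely where tameness is essential. In the wild case $m=|G_x|$ would vanish in $k$ and the expression $mz^{m-1}\,dz$ degenerates, reflecting exactly the failure of this clean formula that motivates the higher ramification data and the Artin--Schreier root stack formalism developed in Section~\ref{sec:ASrootstacks}.
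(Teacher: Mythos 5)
Your proposal is correct and follows essentially the same route as the paper: reduce to the local quotient presentation $[V/\mu_m]$ at each stacky point via Lemma~\ref{lem:locallyquotientstack} and perform the tame Kummer computation $t=z^m$, $\pi^*dt=mz^{m-1}\,dz$, with tameness ensuring $m$ is a unit. The paper phrases the local contribution as the length $m-1$ of the stalk of $\Omega_{\X/X}$ pulled back along the \'{e}tale chart, which is just the cokernel-length formulation of your divisor-of-a-section computation (the same calculation as in the $[\A^{1}_{\C}/\mu_{n}]$ example).
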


\begin{proof}
This is \cite[Prop.~5.5.6]{vzb}, but the technique will be useful in a later generalization so we paraphrase it here. First assume $\X$ has a single stacky point $P$. Since $\pi$ is an isomorphism away from the stacky points, we may assume by Lemma~\ref{lem:locallyquotientstack} that $\X$ is of the form $\X = [U/\mu_{r}]$ for a scheme $U$ and $\mu_{r}$ the group scheme of $r$th roots of unity, where $r$ is coprime to $\char k$. Set $X = U/\mu_{r}$ and consider the \'{e}tale cover $f : U\rightarrow [U/\mu_{r}]$. Then $f^{*}\Omega_{\X/X}\cong\Omega_{U/X}$ so the stalk of $\Omega_{\X/X}$ at the stacky point has length $r - 1$. In general, since $\X\rightarrow X$ is an isomorphism away from the stacky points, the above calculation at each stacky point implies that the given formula for $K_{\X}$ defines a canonical divisor globally. 
\end{proof}

For a tame stacky curve $\X$, define its {\it (topological) Euler characteristic} $\chi(\X) = -\deg(K_{\X})$ and define its {\it genus} $g(\X)$ by the equation $\chi(\X) = 2 - 2g(\X)$. 

\begin{cor}
\label{cor:stackyRH}
For a tame stacky curve $\X$ with coarse space $X$, 
$$
g(\X) = g(X) + \frac{1}{2}\sum_{x\in\X(k)} \left (1 - \frac{1}{|G_{x}|}\right )\deg\pi(x). 
$$
where $\pi : \X\rightarrow X$ is the coarse map. 
\end{cor}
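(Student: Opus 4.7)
The plan is to derive the genus formula by taking degrees in the canonical divisor formula of Proposition~\ref{prop:stackyRH}. Unpacking the definitions, $\chi(\X) = -\deg K_{\X}$ and $\chi(\X) = 2 - 2g(\X)$ together give $g(\X) = 1 + \tfrac{1}{2}\deg K_{\X}$, and likewise $g(X) = 1 + \tfrac{1}{2}\deg K_{X}$ for the (ordinary) coarse space. So the corollary reduces to computing $\deg K_{\X}$ and matching it against $2g(X) - 2$ plus the ramification sum.

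Applying Proposition~\ref{prop:stackyRH} and taking degrees yields
\[
  \deg K_{\X} \;=\; \deg(\pi^{*}K_{X}) \;+\; \sum_{x\in\X(k)}(|G_{x}| - 1)\deg x.
\]
Two ingredients are then needed. First, $\deg(\pi^{*}K_{X}) = \deg K_{X}$: this follows from the correspondence of Proposition~\ref{prop:Qdivcorresp}, since $\pi^{*}$ sends a divisor $D$ on $X$ to the stacky divisor whose associated rational divisor on $X$ is $D$ itself, and the degree on $\X$ is by definition computed through the residue gerbe in a way compatible with this correspondence. (Equivalently, $\pi$ is an isomorphism on a dense open substack, so for principal divisors one can invoke Lemma~\ref{lem:prindivcoarse} directly and then extend by linear equivalence.) Second, for a geometric point $x\in\X(k)$ with image $\pi(x)\in X$, the residue gerbe $\mathcal{G}_{x}\cong BG_{x}$ sits over $\Spec\kappa(\pi(x))$ so that $\deg x = \deg\mathcal{G}_{x} = \deg\pi(x)/|G_{x}|$, generalising the formula $\deg P = 1/|G_{P}|$ stated earlier in the $k$-rational case.

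Plugging these in and noting that the summands with $|G_{x}| = 1$ vanish, we obtain
\[
  \deg K_{\X} \;=\; (2g(X) - 2) \;+\; \sum_{x\in\X(k)}(|G_{x}| - 1)\cdot\frac{\deg\pi(x)}{|G_{x}|} \;=\; (2g(X) - 2) \;+\; \sum_{x\in\X(k)}\left(1 - \frac{1}{|G_{x}|}\right)\deg\pi(x).
\]
Dividing by $2$ and adding $1$ produces exactly the formula in the statement.

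The proof is essentially algebraic once the two degree identities above are known; the only real content to check is the invariance $\deg(\pi^{*}K_{X}) = \deg K_{X}$ under coarse-space pullback and the gerbe-degree formula $\deg x = \deg\pi(x)/|G_{x}|$. Both are standard consequences of the framework set up in Sections~\ref{sec:stackycurves} and~\ref{sec:divisorsbundles}, so the main obstacle is really just bookkeeping rather than any new argument.
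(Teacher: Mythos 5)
Your proof is correct and follows exactly the route the paper intends: the corollary is stated as an immediate consequence of Proposition~\ref{prop:stackyRH}, obtained by taking degrees in the canonical divisor formula together with the degree conventions $\deg x = \deg\pi(x)/|G_{x}|$ and the invariance $\deg(\pi^{*}K_{X}) = \deg K_{X}$, which is precisely your bookkeeping. No discrepancy with the paper's (implicit) argument.
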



\section{Line Bundles, Sections and Root Stacks}
\label{sec:kummerrootstack}

Suppose $L$ is a line bundle on a scheme $X$. A natural question to ask is whether there exists another line bundle, say $E$, such that $E^{r} := E^{\otimes r} = L$ for a given integer $r\geq 1$. The following construction, originally found in \cite{cad} and \cite{agv}, produces a stacky version of $X$ called a root stack on which objects like $L^{1/r}$ live. An important application for our purposes is that every tame stacky curve over an algebraically closed field is a root stack. The authors in \cite{vzb} use this to give a complete description of the canonical ring of tame stacky curve, which will be the subject of Section~\ref{sec:canrings}. 

Recall that in the category of topological spaces, for any group $G$ there is a principal $G$-bundle functor 
$$
X \longmapsto \widetilde{\Bun}_{G}(X) = \{\text{principal $G$-bundles } P\rightarrow X\}/\text{iso.}
$$
which is represented by a classifying space $BG$, i.e.~there is a natural isomorphism 
$$
\widetilde{\Bun}_{G}(-) \cong [-,BG]
$$
where $[X,BG]$ denotes the set of homotopy classes of maps $X\rightarrow BG$. Topologically, $BG$ is the base of a universal bundle $EG\rightarrow BG$ with contractible total space $EG$, so informally, we can think of the classifying space as $BG = \bullet/G$. In fact, this works algebraically: set $EG = \Spec k$ (for $k$-schemes, and $EG = S$ for schemes over an arbitrary base $S$) and $BG = EG/G$ where $G$ acts trivially on $EG$. Additionally, it is often desirable to have a classification of \emph{all} principal $G$-bundles over $X$, rather than just isomorphism classes of bundles. 

With these things in mind, the best object for understanding principal bundles in algebraic geometry is the {\it classifying stack} $BG = [EG/G]$, where as above, $EG = \Spec k$ and $G$ is a group scheme over $k$. (From here on, let $BG$ denote the classifying stack; there should be no confusion with the topological space discussed above.) For a fixed $k$-scheme $X$, the assignment $T\mapsto \Bun_{G}(X)(T) = \{\text{principal $G$-bundles over } T\times_{k}X\}$ defines a stack (\cite[VIII, Thm.~1.1 and Prop.~1.10]{sga1}), and we have: 

\begin{prop}
For all $k$-schemes $X$, there is an isomorphism of stacks
$$
\Bun_{G}(X) \xrightarrow{\;\sim\;} \Hom_{\cat{Stacks}}(X,BG)
$$
where $\Hom_{\cat{Stacks}}(-,-)$ denotes the internal $\Hom$ stack in the category of $k$-stacks. 
\end{prop}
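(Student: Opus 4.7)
My plan is to unwind the definitions on both sides, identify each stack's $T$-points with the groupoid of principal $G$-bundles on $X \times_k T$, and conclude by applying Lemma~\ref{lem:CFGequiv}. The key observation is that $BG = [\Spec k/G]$ is designed precisely so that its $T$-points parametrize principal $G$-bundles, and combining this with the $2$-Yoneda lemma will make the equivalence almost tautological.

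First I would identify $BG(T)$ with the groupoid of principal $G$-bundles on $T$ for any $k$-scheme $T$. By Example~\ref{ex:quotientstack}, an object of $[\Spec k/G](T)$ consists of a principal $(G \times_k T)$-torsor $P \to T$ together with a $(G \times_k T)$-equivariant map $\pi : P \to \Spec k \times_k T = T$. Since $G$ acts trivially on $\Spec k$, the map $\pi$ is forced to coincide with the structure map of the torsor and carries no additional data, so $BG(T)$ is canonically equivalent to the groupoid of principal $G$-bundles on $T$, functorially in $T$ via pullback of torsors. For the left-hand side, by definition the internal $\Hom$ stack satisfies $\Hom_{\cat{Stacks}}(X, BG)(T) = \Hom_{\cat{Stacks}/k}(X \times_k T,\, BG)$, and the $2$-Yoneda lemma (cf.~\cite[3.2.2]{ols}) applied to the scheme $X \times_k T$ identifies this groupoid canonically with $BG(X \times_k T)$. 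By the previous step this is in turn the groupoid of principal $G$-bundles on $X \times_k T$.

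On the other side, by definition $\Bun_G(X)(T)$ is already the groupoid of principal $G$-bundles on $T \times_k X \cong X \times_k T$, with morphisms given by torsor isomorphisms. So for each $T$ the two groupoids agree, yielding an equivalence $\Bun_G(X)(T) \xrightarrow{\sim} \Hom_{\cat{Stacks}}(X, BG)(T)$, and Lemma~\ref{lem:CFGequiv} then produces the claimed isomorphism of stacks provided the equivalence is natural in $T$. The only mildly delicate point, which I expect to be the main technical step, is verifying that restriction along a morphism $T' \to T$ commutes with all the identifications: on the $\Hom$ side it is precomposition with $X \times_k T' \to X \times_k T$, while on the $\Bun_G$ side it is pullback of the torsor along this same morphism, and the two match essentially by the naturality built into the $2$-Yoneda lemma. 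This is a standard but slightly tedious descent bookkeeping exercise that does not involve any new input beyond the definitions and the $2$-Yoneda statement.
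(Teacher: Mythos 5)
Your proposal is correct and follows essentially the same route as the paper, whose proof is simply the one-line assertion that the statement follows from the definition of $BG = [\Spec k/G]$ in Example~\ref{ex:quotientstack}; you have merely filled in the definitional unwinding (identifying $BG(T)$ with $G$-torsors on $T$, applying the $2$-Yoneda lemma to $X\times_{k}T$, and assembling the fibrewise equivalences via Lemma~\ref{lem:CFGequiv}) that the paper leaves implicit. No gaps.
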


\begin{proof}
Follows from the definition of $BG = [\Spec k/G]$ in Example~\ref{ex:quotientstack}. 
\end{proof}

\begin{ex}
\label{ex:vectorbundle}
When $G = GL_{n}(k)$, principal $G$-bundles are in one-to-one correspondence with rank $n$ vector bundles (locally free sheaves) on $X$. Any line bundle $L\rightarrow X$ determines a $\G_{m}$-bundle $L_{0} = L\smallsetminus s_{0}(X)$, where $s_{0} : X\rightarrow L$ is the zero section. (Equivalently, $L_{0}$ is the frame bundle of $L$.) Conversely, a $\G_{m}$-bundle $P\rightarrow X$ determines a line bundle by the associated bundle construction: 
$$
L = P\times^{\G_{m}}\A^{1} := \{(y,\lambda)\in P\times\A^{1}\}/(y\cdot g,\lambda)\sim (y,g\lambda) \text{ for } g\in\G_{m}. 
$$
(We will use the convention that a group acts on principal bundles on the right.) One can check that $L\mapsto L_{0}$ and $P\mapsto P\times^{\G_{m}}\A^{1}$ are mutual inverses. In particular, principal $\G_{m}$-bundles are identified with line bundles on $X$, and this correspondence extends to an isomorphism of stacks $\Pic(X) \xrightarrow{\sim} \Hom_{\cat{Stacks}}(X,B\G_{m})$, where $\Pic(X)$ is the Picard stack on $X$ (cf.~\cite[Tag 0372]{sp}). 
\end{ex}

For a scheme $X$, let $\DIV^{[1]}(X)$ denote the category whose objects are pairs $(L,s)$, with $L\rightarrow X$ a line bundle and $s\in H^{0}(X,L)$ is a global section. A morphism $(L,s)\rightarrow (M,t)$ in $\DIV^{[1]}(X)$ is given by a bundle isomorphism 
\begin{center}
\begin{tikzpicture}[scale=2]
  \node at (-.7,1) (a) {$L$};
  \node at (.7,1) (b) {$M$};
  \node at (0,0) (c) {$X$};
  \draw[->] (a) -- (b) node[above,pos=.5] {$\varphi$};
  \draw[->] (a) -- (c);
  \draw[->] (b) -- (c);
\end{tikzpicture}
\end{center}
under which $\varphi(s) = t$. The notation $\DIV^{[1]}(X)$ is adapted from the notation $\DIV^{+}(X)$ used in some places in the literature, e.g.~\cite{ols}. We make the change to allow for a generalization in Section~\ref{sec:ASrootstacks}. 

By Example~\ref{ex:vectorbundle}, $B\G_{m}$ classifies line bundles, but to classify pairs $(L,s)$, we need to add a little ``fuzz'' to $B\G_{m}$. The next result shows how to do this with a quotient stack that is a ``thickened'' version of $B\G_{m}$. 

\begin{prop}
\label{prop:linebundlesectionclassification}
There is an isomorphism of categories fibred in groupoids $\DIV^{[1]} \cong [\A^{1}/\G_{m}]$. 
\end{prop}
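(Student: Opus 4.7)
The plan is to invoke Lemma~\ref{lem:CFGequiv}: it suffices to exhibit, for each $k$-scheme $T$, an equivalence of categories $F_{T}: \DIV^{[1]}(T)\to[\A^{1}/\G_{m}](T)$ that is natural in $T$. Unpacking Example~\ref{ex:quotientstack} with $G=\G_{m}$ and $X=\A^{1}$, an object of $[\A^{1}/\G_{m}](T)$ consists of a $\G_{m}$-torsor $P\to T$ for the \'{e}tale topology together with a $\G_{m}$-equivariant $T$-morphism $\pi:P\to\A^{1}\times_{k}T$; projecting to the first factor, this is the same data as a $\G_{m}$-equivariant morphism $f_{\pi}:P\to\A^{1}$, where $\G_{m}$ acts on $\A^{1}$ by multiplication.

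On objects I would define $F_{T}(L,s)=(L_{0},\pi_{s})$, where $L_{0}$ is the frame bundle of $L$ as in Example~\ref{ex:vectorbundle}, and $\pi_{s}$ is determined by $s$ via the following dictionary: under the isomorphism $L\cong L_{0}\times^{\G_{m}}\A^{1}$, a section $s\in H^{0}(T,L)$ corresponds uniquely to a function $f_{s}:L_{0}\to\A^{1}$ satisfying $f_{s}(y\cdot g)=g^{-1}f_{s}(y)$ for $g\in\G_{m}$, which is precisely the $\G_{m}$-equivariance condition (the relation $f_{s}(yg)=g^{-1}f_{s}(y)$ being forced by well-definedness of the class $[y,f_{s}(y)]\in L$). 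On morphisms, an isomorphism $\varphi:(L,s)\to(L',s')$ restricts to an isomorphism $\varphi_{0}:L_{0}\to L_{0}'$ of $\G_{m}$-torsors, and the condition $\varphi(s)=s'$ translates to $f_{s'}\circ\varphi_{0}=f_{s}$, so $\varphi_{0}$ is a morphism in $[\A^{1}/\G_{m}](T)$.

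For the quasi-inverse $G_{T}$, I would send $(P,\pi)$ to $(P\times^{\G_{m}}\A^{1},s_{\pi})$, where $s_{\pi}$ is the section produced by the reverse dictionary applied to $f_{\pi}$. Verifying $F_{T}\circ G_{T}\simeq\operatorname{id}$ and $G_{T}\circ F_{T}\simeq\operatorname{id}$ then reduces to combining the already-noted equivalence $L\mapsto L_{0}$, $P\mapsto P\times^{\G_{m}}\A^{1}$ between line bundles and $\G_{m}$-torsors (Example~\ref{ex:vectorbundle}) with the section/equivariant-map bijection above.

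Finally, naturality in $T$ comes from the compatibility of pullback along a morphism $T'\to T$ with each of the three operations (taking frame bundles, forming associated line bundles, and pulling back equivariant maps/sections), so the $F_{T}$ assemble into a morphism of fibred categories. The main obstacle is essentially notational bookkeeping: pinning down the $\G_{m}$-action conventions (in particular, the placement of $g$ versus $g^{-1}$ in $f_{s}(yg)=g^{-1}f_{s}(y)$) consistently on both sides, and checking that the section/equivariant-map dictionary is natural in $T$ rather than merely an object-level bijection.
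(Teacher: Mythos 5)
Your proposal is correct and follows essentially the same route as the paper: both reduce to the line bundle/$\G_{m}$-torsor dictionary of Example~\ref{ex:vectorbundle} and conclude by assembling the fibrewise equivalences via Lemma~\ref{lem:CFGequiv}. The only real difference is presentational --- the paper encodes the section by factoring through the forgetful map $[\A^{1}/\G_{m}]\rightarrow B\G_{m}$ and using the universal property of the pullback (identifying $T\times_{B\G_{m}}[\A^{1}/\G_{m}]$ with the total space of $L$), whereas you unpack $[\A^{1}/\G_{m}](T)$ directly as torsors with equivariant maps and translate $s$ into the function $f_{s}$ on the frame bundle, with your explicit quasi-inverse and your flagged caution about the $g$ versus $g^{-1}$ convention being exactly the right bookkeeping to watch.
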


\begin{proof}
Let $[\A^{1}/\G_{m}]\rightarrow B\G_{m}$ be the ``forgetful map'', sending an $X$-point 
$$
\left (\tikz[scale=1.2,baseline=12]{
  \node at (0,1) (a) {$P$};
  \node at (1,1) (b) {$\A^{1}$};
  \node at (0,0) (c) {$X$};
  \draw[->] (a) -- (b) node[above,pos=.5] {$f$};
  \draw[->] (a) -- (c) node[left,pos=.5] {$\pi$};
}\right )\in [\A^{1}/\G_{m}](X)
$$
to the $\G_{m}$-bundle $(P\xrightarrow{\pi}X)\in B\G_{m}(X)$. By the universal property of pullbacks, a map $X\rightarrow [\A^{1}/\G_{m}]$ is equivalent to the choice of a map $g : X\rightarrow B\G_{m}$ and a section $s$ of $X\times_{B\G_{m}}[\A^{1}/\G_{m}]\rightarrow X$. Further, $g$ is equivalent to a $\G_{m}$-bundle $E = g^{*}E\G_{m}\rightarrow X$, where $E\G_{m}\rightarrow B\G_{m}$ is the universal $\G_{m}$-bundle, and $L = X\times_{B\G_{m}}[\A^{1}/\G_{m}]$ is the line bundle on $X$ corresponding to $E$ (as in Example~\ref{ex:vectorbundle}), so $[\A^{1}/\G_{m}](X)$ is in bijection with $\DIV^{[1]}(X)$. All of the above choices are natural, so we have constructed an equivalence of categories on fibres $\DIV^{[1]}(X)\xrightarrow{\sim}[\A^{1}/\G_{m}](X)$ for each $X$. By definition, $\DIV^{[1]}$ is a category fibred in groupoids (over $\cat{Sch}$), so by Lemma~\ref{lem:CFGequiv}, this defines an isomorphism of categories fibred in groupoids $\DIV^{[1]}\xrightarrow{\sim} [\A^{1}/\G_{m}]$. 
\end{proof}

\begin{cor}
$\DIV^{[1]}$ is an algebraic stack. 
\end{cor}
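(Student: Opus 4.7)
The proof is essentially immediate from what has already been established. The plan is to apply Proposition~\ref{prop:linebundlesectionclassification} to reduce the claim to a statement about a quotient stack, then invoke the structure theorem for quotient stacks already recorded in the paper.

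First, by Proposition~\ref{prop:linebundlesectionclassification}, there is an isomorphism of categories fibred in groupoids $\DIV^{[1]} \cong [\A^{1}/\G_{m}]$. Since algebraicity is preserved under equivalence of categories fibred in groupoids, it therefore suffices to show that $[\A^{1}/\G_{m}]$ is an algebraic stack.

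Next, I would apply the proposition following Example~\ref{ex:quotientstack}, which asserts that for a smooth group scheme $G$ acting on a smooth quasi-projective scheme $X$, the quotient stack $[X/G]$ is algebraic. Here $\G_{m} = \Spec k[t,t^{-1}]$ is a smooth affine group scheme acting on the smooth quasi-projective scheme $\A^{1}$ by scalar multiplication, so the hypotheses are met and $[\A^{1}/\G_{m}]$ is an algebraic stack.

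There is essentially no obstacle here: the nontrivial content was already packaged into Proposition~\ref{prop:linebundlesectionclassification}, and the algebraicity of a quotient by a smooth group scheme action on a scheme is a standard fact cited earlier. One could optionally remark that $[\A^{1}/\G_{m}]$ is \emph{not} Deligne--Mumford, because the origin of $\A^{1}$ has stabilizer all of $\G_{m}$, which fails to be \'{e}tale over $k$; this explains why the classifying object for $(L,s)$ must be genuinely algebraic rather than Deligne--Mumford.
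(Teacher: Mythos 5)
Your argument is correct and matches the paper's (implicit) reasoning exactly: the corollary is meant to follow immediately from Proposition~\ref{prop:linebundlesectionclassification} together with the standard algebraicity of $[\A^{1}/\G_{m}]$ as a quotient of a smooth quasi-projective scheme by a smooth group scheme, which is precisely the proposition recorded after Example~\ref{ex:quotientstack}. Your closing remark that $[\A^{1}/\G_{m}]$ fails to be Deligne--Mumford at the origin is also accurate and consistent with the paper's framing.
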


We can similarly classify sequences of pairs $(L_{1},s_{1}),\ldots,(L_{n},s_{n})$ by a single quotient stack. 

\begin{lem}
\label{lem:nfoldproductA1Gm}
For all $n\geq 2$, $[\A^{n}/\G_{m}]\cong \underbrace{[\A^{1}/\G_{m}]\times_{B\G_{m}}\cdots\times_{B\G_{m}}[\A^{1}/\G_{m}]}_{n}$. 
\end{lem}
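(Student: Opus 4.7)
The plan is to bootstrap from Proposition~\ref{prop:linebundlesectionclassification} and Lemma~\ref{lem:CFGequiv}, showing that both stacks represent the same moduli problem: a line bundle $L$ on the test scheme $T$ together with an ordered $n$-tuple of sections $(s_{1},\ldots,s_{n})\in H^{0}(T,L)^{n}$.

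First I would unpack the right-hand side. By Proposition~\ref{prop:linebundlesectionclassification}, each factor $[\A^{1}/\G_{m}](T) \simeq \DIV^{[1]}(T)$ is equivalent to the groupoid of pairs $(L,s)$ with $L\in\Pic(T)$ and $s\in H^{0}(T,L)$, and the forgetful morphism $[\A^{1}/\G_{m}]\rightarrow B\G_{m}$ is induced by $(L,s)\mapsto L$ (equivalently by the zero section $B\G_{m}\hookrightarrow [\A^{1}/\G_{m}]$ together with the open embedding $\G_{m}\hookrightarrow \A^{1}$). By the universal property of the (iterated) $2$-fibre product over $B\G_{m}$, a $T$-point of the $n$-fold fibre product is the data of $n$ pairs $(L_{i},s_{i})$ together with a chain of isomorphisms $L_{1}\xrightarrow{\sim} L_{2}\xrightarrow{\sim}\cdots\xrightarrow{\sim} L_{n}$; after transporting structure through these isomorphisms, this is naturally equivalent to a single pair $(L,(s_{1},\ldots,s_{n}))$ with a common line bundle $L$.

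Next I would unpack the left-hand side with the diagonal $\G_{m}$-action on $\A^{n}$. A $T$-point of $[\A^{n}/\G_{m}]$ is, by Example~\ref{ex:quotientstack}, a $\G_{m}$-torsor $\pi : P\rightarrow T$ together with a $\G_{m}$-equivariant morphism $f : P\rightarrow\A^{n}$. Composing with each of the $n$ projections $p_{i} : \A^{n}\rightarrow\A^{1}$ (which are $\G_{m}$-equivariant for the diagonal and standard actions) produces $n$ equivariant maps $f_{i} = p_{i}\circ f : P\rightarrow\A^{1}$. Under the correspondence in Example~\ref{ex:vectorbundle}, the torsor $P$ corresponds to a line bundle $L$ on $T$, and each equivariant map $P\rightarrow\A^{1}$ corresponds to a section of $L$ (via the associated bundle description). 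Conversely, any tuple $(L,s_{1},\ldots,s_{n})$ assembles into an equivariant map $P\rightarrow\A^{1}\times\cdots\times\A^{1}=\A^{n}$ because the diagonal $\G_{m}$-action on $\A^{n}$ is precisely the product of the scaling actions on each factor.

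These identifications are evidently natural in $T$ and compatible with pullbacks, so I obtain an equivalence of fibre categories $[\A^{n}/\G_{m}](T)\simeq\bigl([\A^{1}/\G_{m}]\times_{B\G_{m}}\cdots\times_{B\G_{m}}[\A^{1}/\G_{m}]\bigr)(T)$ for every $k$-scheme $T$; Lemma~\ref{lem:CFGequiv} then upgrades this to the desired isomorphism of categories fibred in groupoids. The only genuinely delicate point, and the one I would spend the most care on, is the bookkeeping involved in the iterated $2$-fibre product: one must verify that the various transition isomorphisms $L_{i}\cong L_{j}$ produced by the universal property can be canonically absorbed into the data of a single line bundle with an ordered tuple of sections, so that the equivalence is strict enough to be functorial rather than merely essentially surjective on each fibre. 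Once this coherence is in hand, the isomorphism is formal.
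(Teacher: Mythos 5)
Your proof is correct and in essence the same as the paper's: the paper's argument simply observes that a $T$-point of the $n$-fold fibre product over $B\G_{m}$ is a single $\G_{m}$-bundle $P\rightarrow T$ equipped with $n$ equivariant maps $P\rightarrow\A^{1}$, which is exactly the data of one equivariant map $P\rightarrow\A^{n}$, i.e.~a $T$-point of $[\A^{n}/\G_{m}]$. Your detour through the $\DIV^{[1]}$ description of the right-hand side is harmless but unnecessary; staying in torsor language, as the paper does, avoids the transport-of-structure bookkeeping you flag as the delicate point.
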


\begin{proof}
An $X$-point of the $n$-fold product $[\A^{1}/\G_{m}]\times_{B\G_{m}}\cdots\times_{B\G_{m}}[\A^{1}/\G_{m}]$ is the same thing as a $\G_{m}$-bundle $P\rightarrow X$ with a collection of equivariant maps $f_{1},\ldots,f_{n} : P\rightarrow\A^{1}$, but this data is equivalent to the same bundle $P\rightarrow X$ with a single map $(f_{1},\ldots,f_{n}) : P\rightarrow\A^{n}$, i.e. an $X$-point of the quotient stack $[\A^{n}/\G_{m}]$. 
\end{proof}

Let us now return to the question of when an $r$th root of a line bundle exists. This can alternatively be phrased in terms of cyclic $G$-covers (when the characteristic of the ground field does not divide $|G|$), and Kummer theory gives a natural answer to the question. Recall that a Kummer extension of fields is a Galois extension $L/K$ with group $G = \Z/r\Z$. We will assume $(r,p) = 1$ when $\char K = p > 0$. Explicitly, every such extension is of the form 
$$
L = K[x]/(x^{r} - s) \quad\text{for some } s\in K^{\times}
$$
when $K$ contains all $r$th roots of unity, and the general case has a similar form. To understand cyclic extensions in the language of stacks, we have the following construction due independently to \cite{cad} and \cite{agv}. 

\begin{defn}
For $r\geq 1$, the {\bf universal Kummer stack} is the cover of stacks 
$$
r : [\A^{1}/\G_{m}] \longrightarrow [\A^{1}/\G_{m}]
$$
induced by $x\mapsto x^{r}$ on both $\A^{1}$ and $\G_{m}$. 
\end{defn}

\begin{defn}
For a scheme $X$, a line bundle $L\rightarrow X$ with section $s$ and an integer $r\geq 1$, the $r$th {\bf root stack} of $X$ along $(L,s)$, written $\sqrt[r]{(L,s)/X}$, is defined to be the pullback 
\begin{center}
\begin{tikzpicture}[xscale=3,yscale=2]
  \node at (0,1) (a) {$\sqrt[r]{(L,s)/X}$};
  \node at (1,1) (b) {$[\A^{1}/\G_{m}]$};
  \node at (0,0) (c) {$X$};
  \node at (1,0) (d) {$[\A^{1}/\G_{m}]$};
  \draw[->] (a) -- (b);
  \draw[->] (a) -- (c);
  \draw[->] (b) -- (d) node[right,pos=.5] {$r$};
  \draw[->] (c) -- (d);
\end{tikzpicture}
\end{center}
where the bottom row is the morphism corresponding to $(L,s)$ via Proposition~\ref{prop:linebundlesectionclassification}. 
\end{defn}

\begin{rem}
\label{rem:rootstackpts}
Explicitly, for a test scheme $T$, the category $\sqrt[r]{(L,s)/X}(T)$ consists of tuples $(T\xrightarrow{\varphi}X,M,t,\psi)$ where $M\rightarrow T$ is a line bundle with section $t$ and $\psi : M^{r}\xrightarrow{\sim}\varphi^{*}L$ is an isomorphism of line bundles such that $\psi(t^{r}) = \varphi^{*}s$. 
\end{rem}

To take iterated roots, we need to extend our definition of $\DIV^{[1]}$ to Deligne--Mumford stacks. This is implicitly used in the literature but let us carefully spell things out here. For a Deligne--Mumford stack $\X$, let $\DIV^{[1]}(\X)$ be the collection of pairs $(\L,s)$ where $\L$ is a line bundle on $\X$ and $s$ is a \emph{section of $\L$}, a term which requires some explanation. By definition (cf.~\cite[1.171]{beh}), a vector bundle on a stack $\X$ is the data of a representable morphism of stacks $\E\rightarrow\X$ such that for all schemes $T\xrightarrow{f}\X$, the fibre product $\E_{T} := f^{*}\E\rightarrow T$ in the diagram 
\begin{center}
\begin{tikzpicture}[scale=2]
  \node at (0,1) (a) {$\E_{T}$};
  \node at (1,1) (b) {$\E$};
  \node at (0,0) (c) {$T$};
  \node at (1,0) (d) {$\X$};
  \draw[->] (a) -- (b);
  \draw[->] (a) -- (c);
  \draw[->] (b) -- (d);
  \draw[->] (c) -- (d) node[above,pos=.5] {$f$};
\end{tikzpicture}
\end{center}
is a vector bundle on $T$. In addition, we require the following naturality condition: for any morphism of schemes $\varphi : T'\rightarrow T$, there is a bundle map $\E_{T'} = (f\circ\varphi)^{*}\E\rightarrow \varphi^{*}f^{*}\E = \varphi^{*}\E_{T}$. (Alternatively, one can define vector bundles using the groupoid definition of stacks.) The sheaf of sections of a vector bundle $\E\rightarrow\X$ is given by $\Gamma(\X,E) : T\mapsto H^{0}(T,\E_{T})$. So a choice of `section' $s$ of a line bundle $\L\rightarrow\X$ is really a choice of section $s_{T}\in H^{0}(T,\E_{T})$ for each scheme $T\rightarrow\X$, compatible with morphisms $T'\rightarrow T$. As with schemes, there is a natural notion of morphisms $(\L,s)\rightarrow (\L',s')$ in $\DIV^{[1]}(\X)$. A direct consequence of Proposition~\ref{prop:linebundlesectionclassification} is the following. 

\begin{cor}
\label{cor:linebundlesectionclassification}
Let $\X$ be a Deligne--Mumford stack. There is an equivalence of categories 
$$
\DIV^{[1]}(\X) \xrightarrow{\;\sim\;} \Hom_{\cat{Stacks}}(\X,[\A^{1}/\G_{m}]). 
$$
\end{cor}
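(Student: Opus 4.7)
The plan is to bootstrap from Proposition~\ref{prop:linebundlesectionclassification}, which gives the scheme-level equivalence $\DIV^{[1]}\cong [\A^{1}/\G_{m}]$ as categories fibred in groupoids over $\cat{Sch}_{S}$, and then leverage the fact that both sides are actually stacks to extend the equivalence from schemes to Deligne--Mumford stacks. Concretely, I would first observe that $\DIV^{[1]}$ is a stack on $\cat{Sch}_{S}$ with the \'etale topology: this is descent for quasi-coherent sheaves applied to line bundles, together with the fact that global sections glue. Since Proposition~\ref{prop:linebundlesectionclassification} already shows that $\DIV^{[1]}\to [\A^{1}/\G_{m}]$ is an isomorphism of categories fibred in groupoids, and the latter is an algebraic stack, the two are isomorphic as stacks on $\cat{Sch}_{S}$.

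Next, I would identify $\DIV^{[1]}(\X)$ on the left-hand side with $\Hom_{\cat{Stacks}}(\X,\DIV^{[1]})$. This is essentially the $2$-Yoneda lemma for stacks: unwinding the definition of a line bundle on $\X$ with a section (as given in the paragraph preceding the corollary), the data of a pair $(\L,s)\in\DIV^{[1]}(\X)$ is precisely the assignment, to each scheme $T\xrightarrow{f}\X$, of an object $(\L_{T},s_{T})\in\DIV^{[1]}(T)$, together with coherent isomorphisms for every morphism $T'\to T$ over $\X$. But such an assignment is, by definition, a morphism of categories fibred in groupoids $\X\to\DIV^{[1]}$ over $\cat{Sch}_{S}$; the pullback compatibility for vector bundles stated in the paragraph preceding the corollary matches the cartesian-arrow condition for a morphism of CFGs. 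So the natural map
$$
\DIV^{[1]}(\X)\longrightarrow \Hom_{\cat{Stacks}}(\X,\DIV^{[1]}),\qquad (\L,s)\longmapsto \bigl((T\xrightarrow{f}\X)\mapsto (f^{*}\L,f^{*}s)\bigr),
$$
is an equivalence of categories; injectivity/essential surjectivity on morphisms comes from the compatibility data built into the definition of a section of $\L$.

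Composing this equivalence with $\Hom_{\cat{Stacks}}(\X,\DIV^{[1]})\xrightarrow{\sim}\Hom_{\cat{Stacks}}(\X,[\A^{1}/\G_{m}])$ induced by the isomorphism of stacks of Proposition~\ref{prop:linebundlesectionclassification} then yields the desired equivalence. The entire argument is a formal consequence of Lemma~\ref{lem:CFGequiv} (for the scheme-level equivalence) and stacky $2$-Yoneda.

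The main subtlety, and the step to verify most carefully, is the $2$-Yoneda identification $\DIV^{[1]}(\X)\cong\Hom_{\cat{Stacks}}(\X,\DIV^{[1]})$: one must match the naturality datum in the definition of a section of a line bundle on $\X$ (the maps $\E_{T'}\to\varphi^{*}\E_{T}$ for $\varphi:T'\to T$, together with the compatibility of $s_{T'}$ with $s_{T}$) with the cartesian-arrow compatibility in a morphism of fibred categories. Everything else is formal, but this is where the bookkeeping lives, so I would spell it out using the groupoid/atlas presentation of $\X$ if needed: choose an \'etale atlas $U\to\X$ with groupoid $R=U\times_{\X}U\rightrightarrows U$, reduce both sides to descent data on $(U,R)$, and invoke Proposition~\ref{prop:linebundlesectionclassification} on $U$ and $R$ (which are schemes).
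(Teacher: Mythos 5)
Your argument is correct and is essentially the route the paper intends: the paper states the corollary as a direct consequence of Proposition~\ref{prop:linebundlesectionclassification}, precisely because the definition of a pair $(\L,s)\in\DIV^{[1]}(\X)$ as a compatible family $(\L_{T},s_{T})$ over schemes $T\to\X$ is nothing but a morphism of stacks $\X\to\DIV^{[1]}\cong[\A^{1}/\G_{m}]$. Your explicit unwinding of the $2$-Yoneda identification (and the remark that the naturality maps $\E_{T'}\to\varphi^{*}\E_{T}$ play the role of the cartesian-arrow data) simply spells out what the paper leaves implicit.
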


This extends the definition of a root stack to a stacky base: for a line bundle $\L\rightarrow\X$ over a stack $\X$ with section $s$, let $\sqrt[r]{(\L,s)/\X}$ be the pullback 
\begin{center}
\begin{tikzpicture}[xscale=3,yscale=2]
  \node at (0,1) (a) {$\sqrt[r]{(\L,s)/\X}$};
  \node at (1,1) (b) {$[\A^{1}/\G_{m}]$};
  \node at (0,0) (c) {$\X$};
  \node at (1,0) (d) {$[\A^{1}/\G_{m}]$};
  \draw[->] (a) -- (b);
  \draw[->] (a) -- (c);
  \draw[->] (b) -- (d) node[right,pos=.5] {$r$};
  \draw[->] (c) -- (d);
\end{tikzpicture}
\end{center}
where the bottom row comes from Corollary~\ref{cor:linebundlesectionclassification}. The following basic results and examples may be found in \cite{cad}. 

\begin{lem}
\label{lem:rootstacknatural}
For any morphism of stacks $h : \Y\rightarrow\X$ and line bundle $\L\rightarrow\X$ with section $s$, there is an isomorphism of root stacks 
$$
\sqrt[r]{(h^{*}\L,h^{*}s)/\Y} \xrightarrow{\;\;\sim\;\;} \sqrt[r]{(\L,s)/\X}\times_{\X}\Y. 
$$
\end{lem}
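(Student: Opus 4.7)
The plan is to reduce the claim to standard properties of $2$-fibre products (in particular, the pasting lemma), using Corollary~\ref{cor:linebundlesectionclassification} to translate between data $(\L,s)$ and morphisms to $[\A^{1}/\G_{m}]$.

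First I would check that Corollary~\ref{cor:linebundlesectionclassification} is natural in the base: if $g : \X\to [\A^{1}/\G_{m}]$ is the morphism classifying $(\L,s)$, then the morphism classifying the pullback pair $(h^{*}\L,h^{*}s)$ on $\Y$ is exactly the composition $g\circ h : \Y\to [\A^{1}/\G_{m}]$. This is essentially built in to the proof of Proposition~\ref{prop:linebundlesectionclassification} — both the $\G_{m}$-bundle associated to $h^{*}\L$ and the section $h^{*}s$ are obtained by pulling back along $h$ — but it needs to be recorded before proceeding.

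Once naturality is in place, by definition
\[
\sqrt[r]{(h^{*}\L,h^{*}s)/\Y} \;=\; \Y\times_{(g\circ h),[\A^{1}/\G_{m}],r}[\A^{1}/\G_{m}],
\]
and I would apply the pasting lemma for $2$-fibre products to factor this through $\X$:
\[
\Y\times_{[\A^{1}/\G_{m}]}[\A^{1}/\G_{m}] \;\cong\; \Y\times_{\X}\bigl(\X\times_{[\A^{1}/\G_{m}]}[\A^{1}/\G_{m}]\bigr) \;=\; \Y\times_{\X}\sqrt[r]{(\L,s)/\X},
\]
where the middle isomorphism uses that $\X\to [\A^{1}/\G_{m}]$ is the map $g$ classifying $(\L,s)$ and the bottom-right factor is the defining pullback of $\sqrt[r]{(\L,s)/\X}$. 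This gives the desired equivalence.

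The only real obstacle is bookkeeping: the pasting lemma for $2$-fibre products of stacks requires specifying the coherent $2$-isomorphisms, so one should either invoke it as a general categorical statement (e.g.\ via \cite[Tag 0040]{sp}) or verify the universal property on test schemes using the explicit description of $\sqrt[r]{(\L,s)/\X}(T)$ from Remark~\ref{rem:rootstackpts}. In the latter approach, an object of $(\Y\times_{\X}\sqrt[r]{(\L,s)/\X})(T)$ is a morphism $T\to\Y$ together with a line bundle $M$ on $T$, a section $t$, and an isomorphism $M^{r}\cong (g\circ h\circ \varphi)^{*}\L = \varphi^{*}h^{*}\L$ carrying $t^{r}$ to $\varphi^{*}h^{*}s$, which is precisely the data of a $T$-point of $\sqrt[r]{(h^{*}\L,h^{*}s)/\Y}$; naturality in $T$ gives the equivalence of categories fibred in groupoids, and Lemma~\ref{lem:CFGequiv} then upgrades this to an isomorphism of stacks.
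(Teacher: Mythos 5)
Your argument is correct and is exactly the route the paper takes: its proof simply says the lemma ``follows easily from either the definition of root stack as a pullback, or from the description of its $T$-points (Remark~\ref{rem:rootstackpts}),'' and your proposal fills in precisely these two options (naturality of the classifying map plus pasting of $2$-fibre products, or the explicit $T$-point check with Lemma~\ref{lem:CFGequiv}). No gaps; you have just written out the details the paper leaves to the reader.
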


\begin{proof}
Follows easily from either the definition of root stack as a pullback, or from the description of its $T$-points above (Remark~\ref{rem:rootstackpts}). 
\end{proof}

\begin{ex}
\label{ex:rootstackA1}
Let $X$ be the affine line $\A^{1} = \Spec k[x]$. Then $L = \orb = \orb_{X}$ is a line bundle and the coordinate $x$ gives a section of $L$. Choose $r\geq 1$ that is coprime to $\char k$. We claim that $\sqrt[r]{(\orb,x)/\A^{1}} \cong [\A^{1}/\mu_{r}]$. By the comments above, for a test scheme $T$ the category $\sqrt[r]{(\orb,x)/\A^{1}}(T)$ consists of tuples $(T\xrightarrow{\varphi}\A^{1},M,t,\psi)$ where $M\rightarrow T$ is a line bundle with section $t$ and $\psi : M^{r}\xrightarrow{\sim}\varphi^{*}\orb_{X}$ sending $t^{r}\mapsto \varphi^{*}x$. Note that $\varphi^{*}\orb_{X} = \orb_{T}$ so that $\varphi^{*}x$ corresponds to a section $f\in H^{0}(T,\orb_{T})$ and thus determines a map $f : T\rightarrow\A^{1}$. On the other hand, $[\A^{1}/\mu_{r}](T)$ consists of principal $\mu_{r}$-bundles $P\rightarrow T$ together with $\mu_{r}$-equivariant morphisms $P\rightarrow\A^{1}$. Define a map $\sqrt[r]{(\orb,x)/\A^{1}}(T) \rightarrow [\A^{1}/\mu_{r}](T)$ by sending 
$$
(T\rightarrow X,M,t,\psi) \longmapsto \left (\tikz[scale=1.2,baseline=12]{
  \node at (0,1) (a) {$M_{0}$};
  \node at (1,1) (b) {$\A^{1}$};
  \node at (0,0) (c) {$T$};
  \draw[->] (a) -- (b) node[above,pos=.5] {$h$};
  \draw[->] (a) -- (c) node[left,pos=.5] {$\pi$};
}\right )
$$
where $M_{0}\rightarrow T$ and $h$ are obtained as follows. The Kummer sequence 
$$
\sesx{\mu_{r}}{\G_{m}}{\G_{m}}{}{r}
$$
induces an exact sequence 
$$
H^{1}(T,\mu_{r})\rightarrow H^{1}(T,\G_{m})\xrightarrow{r} H^{1}(T,\G_{m}). 
$$
By exactness, the choice of trivialization $\psi : M^{r}\xrightarrow{\sim}\orb_{T}$ determines a lift of $M$ to a $\mu_{r}$-bundle $M_{0}\rightarrow T$. Then $h$ is the map which takes a root of unity $\zeta$ in the fibre over $q\in T$ to the value $\zeta\sqrt[r]{f(q)}\in\A^{1}$, where $\sqrt[r]{f(q)}$ is a fixed $r$th root of $f(q)$ which is determined after choosing an explicit trivialization of $\varphi^{*}\orb_{X} = \orb_{T}$. (So in each fibre, $1\mapsto\sqrt[m]{f(q)}$.) This extends to an isomorphism of stacks 
$$
\sqrt[r]{(\orb,x)/\A^{1}} \xrightarrow{\;\;\sim\;\;} [\A^{1}/\mu_{r}]. 
$$
\end{ex}

\begin{ex}
\label{ex:elemrootstack}
More generally, when $X = \Spec A$ and $L = \orb_{X}$ with any section $s$, we have 
$$
\sqrt[r]{(\orb_{X},s)/X} \cong [\Spec B/\mu_{r}] \quad\text{where } B = A[x]/(x^{r} - s). 
$$
To see this, note that $(\orb_{X},s)$ induces a morphism $\Spec A\rightarrow [\A^{1}/\G_{m}]$ by Proposition~\ref{prop:linebundlesectionclassification}. Lifting along the $\G_{m}$-cover $\A^{1}\rightarrow [\A^{1}/\G_{m}]$, we get a map $F : X\rightarrow\A^{1}$ making the diagram 
\begin{center}
\begin{tikzpicture}[scale=2]
  \node at (0,0) (1) {$X$};
  \node at (1,1) (2) {$\A^{1}$};
  \node at (1,0) (3) {$[\A^{1}/\G_{m}]$};
  \draw[->,dashed] (1) -- (2) node[above,pos=.5] {$F$};
  \draw[->] (1) -- (3);
  \draw[->] (2) -- (3);
\end{tikzpicture}
\end{center}
commute and such that $\orb_{X} = F^{*}\orb_{\A^{1}}$ and $s = F^{*}x$. By Lemma~\ref{lem:rootstacknatural}, 
\begin{align*}
  \sqrt[r]{(\orb_{X},s)/X} &\cong \sqrt[r]{(\orb_{\A^{1}},x)/\A^{1}}\times_{\A^{1}}X\\
    &\cong [\A^{1}/\mu_{r}]\times_{\A^{1}}X \quad\text{by Example~\ref{ex:rootstackA1}}\\
    &\cong [\Spec B/\mu_{r}]
\end{align*}
for $B = k[x]/(x^{r} - s)\otimes_{k}A = A[x]/(x^{r} - s)$. In general, any root stack $\sqrt[r]{(L,s)/X}$ may be covered by such ``affine'' root stacks $[\Spec B/\mu_{r}]$: 
\end{ex}

\begin{prop}
\label{prop:coverbyrootstacks}
Let $\X = \sqrt[r]{(L,s)/X}$ be an $r$th root stack of a scheme $X$ along a pair $(L,s)$, with coarse map $\pi : \X\rightarrow X$. Then for any point $\bar{x} : \Spec k\rightarrow\X$, there is an affine \'{e}tale neighborhood $U = \Spec A\rightarrow X$ of $x = \pi\circ\bar{x}$ such that $U\times_{X}\X \cong [\Spec B/\mu_{r}]$, where $B = A[x]/(x^{r} - s)$. 
\end{prop}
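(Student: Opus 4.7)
The plan is to reduce to the case already handled in Example~\ref{ex:elemrootstack} by trivializing the line bundle $L$ on a suitable affine neighborhood of $x$, then applying base change for root stacks from Lemma~\ref{lem:rootstacknatural}.

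First I would pick the neighborhood. Since $L$ is a line bundle on the scheme $X$ and line bundles are Zariski-locally trivial, I can find an affine Zariski open $U = \Spec A$ of $X$ containing $x$ together with a trivialization $\varphi : L|_{U} \xrightarrow{\sim} \orb_{U}$. Any Zariski open immersion is \'{e}tale, so $U\to X$ is an \'{e}tale neighborhood of $x$ as required. Under $\varphi$ the restricted section $s|_{U} \in H^{0}(U, L|_U)$ corresponds to an element of $A$ which by abuse of notation I still write $s$, giving a pair $(\orb_U, s)$ with the same data (up to the trivialization isomorphism) as $(L|_U, s|_U)$.

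Next I would invoke the base change property. By Lemma~\ref{lem:rootstacknatural} applied to the open immersion $U \hookrightarrow X$,
$$
U\times_{X}\X \;=\; U\times_{X}\sqrt[r]{(L,s)/X} \;\cong\; \sqrt[r]{(L|_U, s|_U)/U},
$$
and transporting along the trivialization $\varphi$ yields $\sqrt[r]{(L|_U, s|_U)/U} \cong \sqrt[r]{(\orb_U, s)/U}$. Finally, Example~\ref{ex:elemrootstack} identifies the latter with $[\Spec B/\mu_r]$, where $B = A[x]/(x^r - s)$. Chaining these isomorphisms gives the desired description $U\times_{X}\X \cong [\Spec B/\mu_r]$.

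There is no real obstacle: the content has been built up in the previous results, and the proof is essentially the observation that local triviality of $L$ plus naturality of the root stack construction reduces the global assertion to the affine/trivial case of Example~\ref{ex:elemrootstack}. If one prefers to work in a strictly \'{e}tale (but non-Zariski) neighborhood, the same argument applies verbatim since \'{e}tale-local trivializations of $L$ also exist and Lemma~\ref{lem:rootstacknatural} is stated for arbitrary morphisms of stacks.
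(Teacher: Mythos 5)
Your proof is correct and matches the paper's intended argument: the paper simply states that the proposition is an easy consequence of Lemma~\ref{lem:rootstacknatural} and Example~\ref{ex:elemrootstack}, which is exactly the reduction you carry out (trivialize $L$ on an affine neighborhood, base change the root stack, and apply the affine case).
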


\begin{proof}
This is an easy consequence of Lemma~\ref{lem:rootstacknatural} and Example~\ref{ex:elemrootstack}. 
\end{proof}

\begin{thm}
\label{thm:rootstackDM}
If $\X$ is a Deligne--Mumford stack with line bundle $\L$ and section $s$ and $r$ is invertible on $\X$, then $\sqrt[r]{(\L,s)/\X}$ is a Deligne--Mumford stack. 
\end{thm}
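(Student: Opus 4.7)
The plan is to reduce to the local structure of a root stack established in Proposition~\ref{prop:coverbyrootstacks}, where the key point is that $\mu_{r}$ is an étale group scheme when $r$ is invertible.

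First I would verify that $\sqrt[r]{(\L,s)/\X}$ is an algebraic stack. By construction it is the fibre product $\X\times_{[\A^{1}/\G_{m}]}[\A^{1}/\G_{m}]$, where $[\A^{1}/\G_{m}]$ is algebraic (it is a quotient of a scheme by a smooth group scheme, as noted in Example~\ref{ex:quotientstack}) and $\X$ is algebraic by assumption. Since algebraic stacks are closed under fibre products, the root stack is algebraic.

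Next I would produce an étale surjection from a scheme. Fix an étale surjection $f : U\rightarrow\X$ with $U$ a scheme, which exists because $\X$ is Deligne--Mumford. By Lemma~\ref{lem:rootstacknatural}, base change gives an isomorphism
\[
\sqrt[r]{(\L,s)/\X}\times_{\X}U \;\cong\; \sqrt[r]{(f^{*}\L,f^{*}s)/U},
\]
and the induced map $\sqrt[r]{(f^{*}\L,f^{*}s)/U}\rightarrow\sqrt[r]{(\L,s)/\X}$ is étale and surjective (as the base change of $f$). So it suffices to show that for the scheme $U$, the root stack $\sqrt[r]{(f^{*}\L,f^{*}s)/U}$ is Deligne--Mumford. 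Cover $U$ by affine opens $V = \Spec A$ trivializing $f^{*}\L$; by Example~\ref{ex:elemrootstack} (or Proposition~\ref{prop:coverbyrootstacks}), on each such chart the root stack is isomorphic to $[\Spec B/\mu_{r}]$ with $B = A[x]/(x^{r}-s)$.

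The key observation now is that $r$ is invertible on $\X$, hence on $U$, hence on $A$, so $\mu_{r}=\Spec A[t]/(t^{r}-1)$ is an étale group scheme over $\Spec A$. Therefore the quotient map $\Spec B\rightarrow[\Spec B/\mu_{r}]$ is an étale surjection, exhibiting each chart as a Deligne--Mumford stack. Composing the étale surjections
\[
\coprod \Spec B \;\longrightarrow\; \coprod [\Spec B/\mu_{r}] \;\longrightarrow\; \sqrt[r]{(f^{*}\L,f^{*}s)/U} \;\longrightarrow\; \sqrt[r]{(\L,s)/\X}
\]
over a trivializing affine cover yields an étale surjection from a scheme, establishing the Deligne--Mumford property. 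The main (mild) obstacle is the bookkeeping needed to verify the étaleness of $\mu_{r}$ in characteristic dividing $r$-free settings and to assemble these local étale presentations into a global one; invertibility of $r$ is exactly what prevents $\mu_{r}$ from acquiring infinitesimal stabilizers, and without it the diagonal of $[\Spec B/\mu_{r}]$ fails to be unramified.
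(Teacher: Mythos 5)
Your argument is correct and follows essentially the same route as the paper: the paper defers to Cadman~\cite[2.3.3]{cad}, and the technique it reuses for the Artin--Schreier analogue (Theorem~\ref{thm:ASrootstackDM}) is exactly your reduction — pull back along an \'{e}tale presentation trivializing the bundle via Lemma~\ref{lem:rootstacknatural}, identify the local charts as $[\Spec B/\mu_{r}]$ via Example~\ref{ex:elemrootstack}, and conclude from the \'{e}taleness of $\mu_{r}$ when $r$ is invertible. No substantive gaps.
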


\begin{proof}
See \cite[2.3.3]{cad}. The technique of this proof will be used to prove an analogous result for Artin--Schreier root stacks (Theorem~\ref{thm:ASrootstackDM}). 
\end{proof}

\begin{ex}
If $s$ is a nonvanishing section of a line bundle $L\rightarrow X$ over a scheme, then $\sqrt[r]{(L,s)/X} \cong X$ as stacks. To see this, note that the following statements are equivalent: 
\begin{enumerate}[\quad (a)]
  \item $s$ is nonvanishing.
  \item $L$ is trivial.
  \item $L_{0}\cong X\times\G_{m}$ as principal bundles over $X$. 
  \item The induced map $X\rightarrow [\A^{1}/\G_{m}]$ factors through $X\rightarrow [\G_{m}/\G_{m}] = \Spec k$. 
\end{enumerate}
Further, (d) implies that $\sqrt[r]{(L,s)/X}\rightarrow X$ is an isomorphism, so (a) does as well. So for any pair $(L,s)$, the stacky structure of $\sqrt[r]{(L,s)/X}$ occurs precisely at the vanishing locus of $s$. 
\end{ex}

\begin{ex}
Let $L$ be a line bundle on a scheme $X$ and consider the pullback: 
\begin{center}
\begin{tikzpicture}[xscale=3,yscale=2]
  \node at (0,1) (a) {$X\times_{B\G_{m}}B\G_{m}$};
  \node at (1,1) (b) {$B\G_{m}$};
  \node at (0,0) (c) {$X$};
  \node at (1,0) (d) {$B\G_{m}$};
  \draw[->] (a) -- (b);
  \draw[->] (a) -- (c);
  \draw[->] (b) -- (d) node[right,pos=.5] {$r$};
  \draw[->] (c) -- (d) node[above,pos=.5] {$L$};
\end{tikzpicture}
\end{center}
Here, the bottom row is induced from the line bundle $L$, using that $B\G_{m}$ classifies line bundles. For the zero section $0$ of $L$, we can view the root stack $\sqrt[r]{(L,0)/X}$ as an infinitesimal thickening of this fibre product $X\times_{B\G_{m}}B\G_{m}$. To see this explicitly, note that by Proposition~\ref{prop:coverbyrootstacks}, $\sqrt[r]{(L,0)/X}$ may be covered by root stacks of the form $[\Spec B/\mu_{r}]$, where $B = A[x]/(x^{r} - 0) = A[x]/(x^{r}) = A\otimes_{k}k[x]/(x^{r})$ for some $\Spec A\subseteq X$. Thus $[\Spec B/\mu_{r}] \cong \Spec A\times B\mu_{r}$, which is indeed an infinitesimal thickening of $\Spec A\times_{B\G_{m}}B\G_{m}$. Alternatively: $X\times_{B\G_{m}}B\G_{m}$ is the closed substack of $\sqrt[r]{(L,0)/X}$ whose $T$-points for a scheme $T$ are given by 
$$
(X\times_{B\G_{m}}B\G_{m})(T) = \{(T\rightarrow X,M,t,\psi)\mid \psi(t^{r}) = 0\}. 
$$
In some places in the literature, the notation $\sqrt[r]{L/X}$ is used for the stack $X\times_{B\G_{m}}B\G_{m}$, in which case $\sqrt[r]{L/X}\hookrightarrow \sqrt[r]{(L,0)/X}$ is an intuitive notation for this infinitesimal thickening. 
\end{ex}

Note that if $p$ divides $r$, the root stack construction of \cite{cad} and \cite{agv} is still well-defined, but Theorem~\ref{thm:rootstackDM} fails. Thus to be able to study $p$th order (and more general) stacky structure in characteristic $p$, we must work with a different notion of root stack.


\section{Artin--Schreier Root Stacks}
\label{sec:ASrootstacks}

When $\char k = p > 0$ and we want to compute a $p$th root of a line bundle, the Frobenius immediately presents problems. Specifically, the cover $[\A^{1}/\G_{m}]\rightarrow [\A^{1}/\G_{m}]$ induced by $x\mapsto x^{p}$ is \emph{not} \'{e}tale. To remedy this, we can once again rephrase the question in terms of cyclic $G$-covers. This time, we will make use of Artin--Schreier theory when $G = \Z/p\Z$ and Artin--Schreier--Witt theory in the general case. Fix $G = \Z/p\Z$ and recall that an Artin--Schreier extension of a perfect field $K$ of characteristic $p$ is a Galois cover $L/K$ with group $\Z/p\Z$. These are explicitly of the form 
$$
L = K[x]/(x^{p} - x - a) \quad\text{for some } a\in K, a\not = b^{p} - b \text{ for any } b\in K. 
$$
When $K$ is a local field with valuation $v$, the integer $m = -v(a)$ is coprime to $p$ and is called the {\it ramification jump} of $L/K$. Different ramification jumps yield \emph{non-isomorphic} $\Z/p\Z$-extensions, a phenomenon which certainly doesn't occur in characteristic $0$. Moreover, Artin--Schreier extensions of a local field are completely classified up to isomorphism by their ramification jump (cf.~\cite[Ch.~IV]{ser}), so this discrete invariant is quite important to understanding $\Z/p\Z$-extensions in characteristic $p$. 

In the one-dimensional case, Artin--Schreier theory is also used to give a precise classification of Galois covers of curves $Y\rightarrow X$ with group $\Z/p\Z$. Suppose $k$ is algebraically closed and $X$ and $Y$ are $k$-curves. If $k(X)$ (resp. $k(Y)$) is the function field of $X$ (resp. $Y$) then the completion of $k(X)$ is isomorphic to the field of Laurent series $k((x))$. The extension $k((y))/k((x))$ given by the completion of $k(Y)$ is Galois of degree $p$, so it is given by an equation $y^{p} - y = f(x)$, and the ramification jump is $m = -v(f)$. We can represent $f(x) = x^{-m}g(x)$ for some $g\in k[[x]]$, and after a change of formal variables, we can even arrange for $g = 1$. When $k$ is algebraically closed, this shows there are infinitely many non-isomorphic $\Z/p\Z$-covers of any given curve $X$ since the ramification jump is an isomorphism invariant of the field extension. 

The following lemma will be useful in later arguments. 

\begin{lem}
\label{lem:ASinteqn}
Let $K = k((x))$ be the local field of Laurent series with valuation ring $A = k[[x]]$ and let $L/K$ be the $\Z/p\Z$-extension given by the equation $y^{p} - y = x^{-m}g(x)$ with $g\in A^{\times}$ $m\equiv -1\pmod{p}$. Write $m + 1= pn$ for $n\in\N$ and let $z = x^{n}y$. If $B$ denotes the integral closure of $A$ in $L$, then $B = A[z]$. 
\end{lem}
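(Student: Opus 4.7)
My approach will be to produce an Eisenstein polynomial for $z$ over $A$ and then invoke the classical fact that in a totally ramified extension of complete discrete valuation rings, the ring of integers is monogenic and generated by any uniformizer.

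First I would derive the minimal polynomial of $z$ over $K$. Substituting $y = x^{-n}z$ into $y^{p} - y = x^{-m}g(x)$ and multiplying through by $x^{pn} = x^{m+1}$ yields
\[
f(T) := T^{p} - x^{n(p-1)}\, T - x\, g(x) \;\in\; A[T],
\]
a monic polynomial of degree $p$ with $f(z) = 0$. Since $[L:K] = p$ is prime and $z\notin K$, this must be the minimal polynomial of $z$ over $K$.

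Next I would check that $f$ is Eisenstein at the maximal ideal $(x)\subset A$. The constant term $-x\, g(x)$ has $x$-adic valuation exactly $1$ because $g\in A^{\times}$, and the coefficient $-x^{n(p-1)}$ of $T$ lies in $(x)$ since $n(p-1)\geq 1$ (using $n\geq 1$ and $p\geq 2$); all other non-leading coefficients vanish. This is precisely the Eisenstein condition.

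From here the conclusion is essentially automatic: by the standard theory of complete discrete valuation rings (see, e.g., \cite[Ch.~I, \S 6]{ser}), Eisenstein-ness of $f$ forces $L/K$ to be totally ramified of degree $p$, forces $z$ to be a uniformizer of the integral closure $B$, and gives the equality $B = A[z]$. The only step demanding any real attention is the exponent bookkeeping in the first computation, which relies crucially on the hypothesis $m+1 = pn$; once $f$ has been written down in the stated form, everything else follows from well-known properties of Eisenstein polynomials over complete DVRs.
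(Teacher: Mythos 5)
Your proposal is correct. You arrive at the same integral equation $z^{p} - x^{n(p-1)}z = xg(x)$ as the paper, but from there the two arguments diverge: you observe that this polynomial is Eisenstein at $(x)$ (the constant term $-xg(x)$ has valuation exactly $1$ since $g\in A^{\times}$, and the linear coefficient lies in $(x)$ because $n(p-1)\geq 1$) and then invoke the classical proposition for complete DVRs that a root of an Eisenstein polynomial is a uniformizer generating the integral closure, which delivers total ramification, $v_{L}(z)=1$, and $B=A[z]$ in one stroke. The paper instead takes total ramification of the Artin--Schreier extension as known (so $v_{L}(x)=p$), computes $v_{L}(y)=-m$ from $v_{L}(y^{p}-y)=-mp$, deduces $v_{L}(z)=np-m=1$ directly, and then uses the fact that a uniformizer of a totally ramified extension of complete DVRs generates the ring of integers. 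Your route is slightly more self-contained, since Eisenstein-ness \emph{proves} rather than presupposes that $e=p$, while the paper's route makes the valuation of $z$ explicit via the ramification-jump arithmetic that recurs later in the section (compare Remark~\ref{rem:ASinteqn}, where the same valuation bookkeeping explains why the case $m\not\equiv -1\pmod{p}$ fails to be monogenic in this naive way). Both ultimately rest on the same standard facts from \cite[Ch.~I, \S 6]{ser}, and your exponent bookkeeping, including the check that $z\notin K$ so that the degree-$p$ polynomial is minimal, is accurate.
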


\begin{proof}
It is easy to see that $z$ satisfies the integral equation 
$$
z^{p} - zx^{n(p - 1)} = xg(x). 
$$
Next, note that $v_{L}(x) = pv_{K}(x) = p$ and $-mp = v_{L}(x^{-m}) = v_{L}(x^{-m}g) = v_{L}(y^{p} - y) = \min\{pv_{L}(y),v_{L}(y)\}$ which implies $v_{L}(y) = -m$. So $v_{L}(z) = nv_{L}(x) + v_{L}(y) = np - m = 1$. Hence $z$ is a uniformizer of $B$, so $B = A[z]$ as claimed. 
\end{proof}

\begin{rem}
\label{rem:ASinteqn}
When $m\not\equiv -1\pmod{p}$, it is not as easy to write down a normal integral equation for $B/A$. Write $m = pn - r$ for $1 < r < p$. Then $z = x^{n}y$ still satisfies the integral equation $z^{p} - zx^{n(p - 1)} = xg(x)$, but now $v_{L}(z) = r > 1$ so we don't get a uniformizer in $A[z]$ for free. In fact, $A[z]\not = B$ in these cases. To fix this, let $c,d\in\Z$ be the unique integers with $0 < c < p$ such that $cr - dp = 1$ and set $u = z^{c}x^{-d} = x^{nc - d}y^{c}$. Then $v_{L}(u) = cv_{L}(z) - dv_{L}(x) = cr - dp = 1$, so $A[u] = B$. However, it is difficult to write down the minimal polynomial of $u$ over $A$ and one should not expect it to produce a normal equation for $B/A$ in general. Instead, one can write down an integral basis of $B/A$ by resolving the singularities in any of the above integral equations step-by-step (such algorithms can be found in~\cite[Lem.~6.3]{op} or \cite[Lem.~5.5]{ls}). 
\end{rem}

Transitioning to the geometric question, suppose we have a line bundle $L\rightarrow X$ and a section $s\in H^{0}(X,L)$ of which we would like to find a $p$th root, i.e.~a pair $(E,t)$ with $(E^{\otimes p},t^{p}) \cong (L,s)$. By Proposition~\ref{prop:linebundlesectionclassification}, such pairs $(L,s)$ are classified by $X$-points of the quotient stack $[\A^{1}/\G_{m}]$. For our purposes, the algebraic stack $[\A^{1}/\G_{m}]$ is not sensitive enough to keep track of the extra information present in Artin--Schreier theory, namely the ramification jump. 

Instead, let $\P(1,m)$ be the \emph{weighted projective line} for an integer $m\geq 1$ which is coprime to $p$. Some authors view this as a scheme, in which case it is the projective line with homogeneous coordinates $[x,y]$ corresponding to the graded ring $k[x_{0},x_{1}]$, but with a generator $x_{0}$ in degree $1$ and a generator $x_{1}$ in degree $m$. However, it is more natural to view $\P(1,m)$ as a stack with a single nontrivial stabilizer group $\Z/m\Z$ at the point $\infty = [0,1]$. In particular, $\P(1,m)$ is a stacky curve and the natural morphism $\P(1,m)\rightarrow\P^{1}$ given by sending $[x,y]\mapsto [x,z]$, where $z = y^{m}$, is a coarse moduli map. The following shows two explicit constructions of $\P(1,m)$ as an algebraic stack. 

\begin{lem}
\label{lem:weightedprojrootstack}
For any $m\geq 1$ coprime to $p$, there are isomorphisms of stacks 
$$
[\A^{2}\smallsetminus\{0\}/\G_{m}] \cong \P(1,m) \cong \sqrt[m]{(\orb(1),s_{\infty})/\P^{1}}
$$
where in the first term, $\G_{m}$ acts on $\A^{2}\smallsetminus\{0\}$ by $\lambda\cdot (x,y) = (\lambda x,\lambda^{m}y)$, and in the third term, $s_{\infty}$ is the section whose divisor is the point $[0,1]$. Furthermore, $\P(1,m)$ is a Deligne--Mumford stack. 
\end{lem}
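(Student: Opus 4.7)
The plan is to verify each isomorphism by comparing functors of points via Lemma~\ref{lem:CFGequiv}, then read off the Deligne--Mumford property from the point stabilizers of the weighted action.

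For the first isomorphism, a $T$-point of $[\A^{2}\smallsetminus\{0\}/\G_{m}]$ under the given weighted action is a $\G_{m}$-torsor $P\to T$ together with a $\G_{m}$-equivariant map $(f_{0},f_{1})\colon P\to\A^{2}\smallsetminus\{0\}$ where $f_{0}$ has weight $1$ and $f_{1}$ has weight $m$. Combining Example~\ref{ex:vectorbundle} with the idea of Lemma~\ref{lem:nfoldproductA1Gm}, this data is equivalent to a line bundle $M$ on $T$ together with sections $t\in H^{0}(T,M)$ and $u\in H^{0}(T,M^{\otimes m})$, where the condition that $(f_{0},f_{1})$ avoids the origin translates to $t$ and $u$ having no common vanishing locus. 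This is the standard description of $T$-points of the weighted projective stack $\P(1,m)$, giving the first isomorphism.

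For the second isomorphism, by Remark~\ref{rem:rootstackpts} a $T$-point of $\sqrt[m]{(\orb(1),s_{\infty})/\P^{1}}$ is a quadruple $(\varphi,M,t,\psi)$ with $\varphi\colon T\to\P^{1}$, $M$ a line bundle on $T$, $t\in H^{0}(T,M)$, and $\psi\colon M^{\otimes m}\xrightarrow{\sim}\varphi^{*}\orb(1)$ satisfying $\psi(t^{m})=\varphi^{*}s_{\infty}$. Since a map $\varphi\colon T\to\P^{1}$ corresponds to a line bundle $L$ on $T$ with two generating sections $s_{0},s_{1}$, and $s_{\infty}$ is (up to scalar) the section of $\orb(1)$ vanishing at $\infty=[0,1]$, we may take $\varphi^{*}s_{\infty}=s_{0}$. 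Using $\psi$ to identify $L=M^{\otimes m}$ and setting $u:=s_{1}\in H^{0}(T,M^{\otimes m})$, the data reduces to the triple $(M,t,u)$ subject to the condition that $t^{m}$ and $u$ generate $M^{\otimes m}$. The key observation is that $t^{m}$ vanishes exactly where $t$ does, so this condition is equivalent to $t$ and $u$ having no common zero, matching the first paragraph's description.

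Finally, the stabilizer of $(x,y)\in\A^{2}\smallsetminus\{0\}$ under the weighted action is $\{\lambda\in\G_{m}\colon \lambda x=x,\ \lambda^{m}y=y\}$, which is trivial when $x\neq 0$ and equals $\mu_{m}$ when $x=0$ (forcing $y\neq 0$). Since $\gcd(m,p)=1$, $\mu_{m}$ is an \'{e}tale group scheme, so the proposition following Example~\ref{ex:quotientstack} yields that $[\A^{2}\smallsetminus\{0\}/\G_{m}]$ is Deligne--Mumford. The main obstacle in this argument is the bookkeeping in the second step: checking that $\varphi^{*}s_{\infty}$ corresponds to $t^{m}$ (rather than to $u$) under the identification $L=M^{\otimes m}$, and that ``$t^{m}$ and $u$ generate $M^{\otimes m}$'' agrees with ``$t$ and $u$ have no common zero,'' which is the non-vanishing condition inherited from the punctured-plane description.
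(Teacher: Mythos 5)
Your argument is correct, but it follows a different route from the paper's. The paper disposes of the lemma in one line: the first isomorphism is taken as essentially definitional, the second is checked on the standard affine cover of $\P^{1}$ using Proposition~\ref{prop:coverbyrootstacks} (which identifies the root stack over each affine chart with $[\Spec A[x]/(x^{m}-s)\,/\mu_{m}]$, matching the two affine charts $[\A^{1}/\mu_{m}]$ and $\A^{1}$ of the weighted projective stack), and the Deligne--Mumford property is then imported wholesale from Theorem~\ref{thm:rootstackDM}. You instead compare functors of points globally: your first paragraph is in effect the content of Proposition~\ref{prop:ASlinebundlesectionclassification} (triples $(M,t,u)$ with $t,u$ not simultaneously vanishing), and your second paragraph reproduces the explicit equivalence $\Upsilon$ that the paper only writes down in the remark following Corollary~\ref{cor:ASlinebundlesectionclassification}; your key bookkeeping point --- that $\psi(t^{m})=\varphi^{*}s_{\infty}$ forces $t^{m}$ (not $u$) to play the role of the section cutting out $\infty$, and that ``$t^{m},u$ generate $M^{\otimes m}$'' is the same as ``$t,u$ have no common zero'' --- is exactly right and is where the content lies. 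For the Deligne--Mumford claim you argue directly that the stabilizers of the weighted $\G_{m}$-action are $1$ or $\mu_{m}$, étale since $(m,p)=1$, and invoke the criterion for quotient stacks; the paper instead gets it from the general root-stack theorem. What each buys: your route is more self-contained and makes the moduli description (and the precise role of $(m,p)=1$) explicit up front, which the paper in any case needs later; the paper's route is shorter because it recycles Proposition~\ref{prop:coverbyrootstacks} and Theorem~\ref{thm:rootstackDM}, at the cost of deferring the explicit $T$-point description to a later remark. The only soft spot in your write-up, shared with the paper, is that the first isomorphism is only as rigorous as one's chosen definition of $\P(1,m)$ as a stack --- if $\P(1,m)$ is defined as the weighted quotient, there is nothing to prove; if it is defined by its coarse description with a $\Z/m\Z$ stabilizer at $\infty$, then ``the standard description of $T$-points'' should be flagged as the definition you are adopting.
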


\begin{proof}
The first isomorphism is obvious from the definition of $\P(1,m)$ as a weighted projective space, while the second is clear from the structure of $\P(1,m)$ on the standard covering by affine opens, along with Proposition~\ref{prop:coverbyrootstacks}. Finally, Theorem~\ref{thm:rootstackDM} and the second isomorphism imply $\P(1,m)$ is Deligne--Mumford. 
\end{proof}

Fix $m\geq 1$ coprime to $p$ and let $\DIV^{[1,m]}(X)$ be the category consisting of triples $(L,s,f)$ with $L\in\Pic(X)$ and sections $s\in H^{0}(X,L)$ and $f\in H^{0}(X,L^{m})$ that don't vanish simultaneously. Morphisms $(L,s_{L},f_{L})\rightarrow (M,s_{M},f_{M})$ in $\DIV^{[1,m]}(X)$ are given by bundle isomorphisms 
\begin{center}
\begin{tikzpicture}[scale=2]
  \node at (-.7,1) (a) {$L$};
  \node at (.7,1) (b) {$M$};
  \node at (0,0) (c) {$X$};
  \draw[->] (a) -- (b) node[above,pos=.5] {$\varphi$};
  \draw[->] (a) -- (c);
  \draw[->] (b) -- (c);
\end{tikzpicture}
\end{center}
under which $\varphi(s_{L}) = s_{M}$ and $\varphi^{\otimes m}(f_{L}) = f_{M}$. Then $\DIV^{[1,m]}$ is a category fibred in groupoids over $\cat{Sch}_{k}$. In analogy with Proposition~\ref{prop:linebundlesectionclassification}, we have: 

\begin{prop}
\label{prop:ASlinebundlesectionclassification}
For each $m\geq 1$, there is an isomorphism of categories fibred in groupoids $\DIV^{[1,m]}\cong \P(1,m)$. 
\end{prop}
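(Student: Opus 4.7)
The plan is to mirror the proof of Proposition~\ref{prop:linebundlesectionclassification}, using the presentation $\P(1,m) \cong [(\A^{2}\smallsetminus\{0\})/\G_{m}]$ from Lemma~\ref{lem:weightedprojrootstack}, where $\G_{m}$ acts with weights $(1,m)$. By Lemma~\ref{lem:CFGequiv}, it suffices to exhibit an equivalence $\DIV^{[1,m]}(T)\xrightarrow{\sim}\P(1,m)(T)$ that is natural in the $k$-scheme $T$. A $T$-point of $[(\A^{2}\smallsetminus\{0\})/\G_{m}]$ consists of a $\G_{m}$-torsor $P\to T$ together with a $\G_{m}$-equivariant morphism $\Phi : P\to\A^{2}\smallsetminus\{0\}$, where $\G_{m}$ acts on the target by $\lambda\cdot(x,y) = (\lambda x,\lambda^{m}y)$.

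Next, I would decompose $\Phi = (\Phi_{1},\Phi_{m})$ into its two components, so that $\Phi_{1} : P\to\A^{1}$ is $\G_{m}$-equivariant of weight $1$ and $\Phi_{m} : P\to\A^{1}$ is $\G_{m}$-equivariant of weight $m$. Via the associated bundle construction of Example~\ref{ex:vectorbundle}, $P$ corresponds to a line bundle $L = P\times^{\G_{m}}\A^{1}$ on $T$ (with the weight $1$ action), and weight-$1$ equivariant maps $P\to\A^{1}$ are in bijection with sections $s\in H^{0}(T,L)$. For the weight-$m$ piece, the same construction applied to the weight-$m$ $\G_{m}$-action on $\A^{1}$ yields the line bundle $P\times^{\G_{m},m}\A^{1}\cong L^{\otimes m}$, whose sections correspond to weight-$m$ equivariant maps $P\to\A^{1}$; so $\Phi_{m}$ corresponds to an $f\in H^{0}(T,L^{\otimes m})$. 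Finally, the condition that $\Phi$ land in $\A^{2}\smallsetminus\{0\}$ translates fiberwise to the condition that $s$ and $f$ not vanish simultaneously at any point of $T$, which is precisely the data of a triple $(L,s,f)\in\DIV^{[1,m]}(T)$.

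Conversely, given $(L,s,f)\in\DIV^{[1,m]}(T)$, I would take $P = L_{0}$ (the associated frame $\G_{m}$-torsor) and use $s,f$ to build the weight-$(1,m)$ equivariant map $(\Phi_{1},\Phi_{m}) : P\to\A^{2}$; the simultaneous non-vanishing hypothesis is exactly what guarantees this map factors through $\A^{2}\smallsetminus\{0\}$. A morphism in $\DIV^{[1,m]}(T)$, i.e.~a bundle isomorphism $\varphi : L\to M$ identifying the respective sections under $\varphi$ and $\varphi^{\otimes m}$, induces a $\G_{m}$-equivariant isomorphism $L_{0}\to M_{0}$ commuting with the equivariant maps to $\A^{2}\smallsetminus\{0\}$; the two assignments are mutually inverse up to canonical $2$-isomorphism, giving an equivalence of groupoids $\DIV^{[1,m]}(T)\xrightarrow{\sim}\P(1,m)(T)$.

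The constructions are manifestly natural in $T$: pullback of torsors, line bundles, and sections commute with each step, so both assignments assemble into morphisms of categories fibred in groupoids. Applying Lemma~\ref{lem:CFGequiv} then promotes the fiberwise equivalence to an isomorphism $\DIV^{[1,m]}\cong\P(1,m)$. The main subtlety to be careful about is the bookkeeping in the second step, i.e.~verifying that a $\G_{m}$-equivariant map of weight $m$ on the torsor $L_{0}$ corresponds to a section of $L^{\otimes m}$ (rather than, say, of $L$ twisted by a character); this is where the weight $m$ of the $\G_{m}$-action on the second coordinate of $\A^{2}$ feeds into the tensor power, and it is what distinguishes $\P(1,m)$ from the product $[\A^{1}/\G_{m}]\times[\A^{1}/\G_{m}]$.
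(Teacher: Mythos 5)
Your proposal is correct and follows essentially the same route as the paper: both identify a $T$-point of $\P(1,m)\cong[(\A^{2}\smallsetminus\{0\})/\G_{m}]$ (weights $(1,m)$) with a triple $(L,s,f)$, with the non-simultaneous vanishing of $s$ and $f$ accounting for the removal of the origin, and then invoke Lemma~\ref{lem:CFGequiv} to pass from fibrewise equivalences to an isomorphism of categories fibred in groupoids. The only cosmetic difference is that you unpack the torsor-plus-equivariant-map description directly via the associated bundle construction, where the paper routes through the forgetful map to $B\G_{m}$ and the weighted analogue of Lemma~\ref{lem:nfoldproductA1Gm}; the weight-$m$ bookkeeping you flag is handled identically in both.
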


\begin{proof}
A map $X\rightarrow [\A^{2}\smallsetminus\{0\}/\G_{m}]$ is equivalent to a map $X\rightarrow [\A^{2}/\G_{m}]$ avoiding $(0,0)$, where $\G_{m}$ acts on $\A^{2}$ with weights $(1,m)$. Let $[\A^{2}/\G_{m}]\rightarrow B\G_{m}$ be the forgetful map. Then by the universal property of pullbacks, the map $X\rightarrow [\A^{2}/\G_{m}]$ is equivalent to the choice of a map $g : X\rightarrow B\G_{m}$ and a section $\sigma$ of the line bundle $L = X\times_{B\G_{m}}[\A^{2}/\G_{m}]\rightarrow X$. The proof of Lemma~\ref{lem:nfoldproductA1Gm} carries through when $\G_{m}$ acts on $\A^{2}$ with any weights, giving $[\A^{2}/\G_{m}]\cong [\A^{1}/\G_{m}]\times_{B\G_{m}}[\A^{1}/\G_{m}]$ with weights $(1,m)$. Then $\sigma$ really corresponds to a section $s$ of $L$, the line bundle associated to $g^{*}E\G_{m}$, and a section $f$ of $L^{m}$, the line bundle associated to $(g^{*}E\G_{m})^{m}$. All of these choices are natural, so we have constructed an equivalence of categories $\DIV^{[1,m]}(X)\xrightarrow{\sim}\P(1,m)(X)$ for each $X$. As in the proof of Proposition~\ref{prop:linebundlesectionclassification}, Lemma~\ref{lem:CFGequiv} guarantees that this extends to an isomorphism $\DIV^{[1,m]}\xrightarrow{\sim} \P(1,m)$ of categories fibred in groupoids. 
\end{proof}

\begin{cor}
$\DIV^{[1,m]}$ is a Deligne--Mumford stack. 
\end{cor}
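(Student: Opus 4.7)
The plan is to derive the corollary almost immediately from the two preceding results. By Proposition~\ref{prop:ASlinebundlesectionclassification}, there is an isomorphism of categories fibred in groupoids
$$
\DIV^{[1,m]} \xrightarrow{\;\sim\;} \P(1,m),
$$
and by Lemma~\ref{lem:weightedprojrootstack}, the weighted projective line $\P(1,m)$ is a Deligne--Mumford stack (exhibited concretely as either a global quotient $[\A^{2}\smallsetminus\{0\}/\G_{m}]$ or as a tame root stack $\sqrt[m]{(\orb(1),s_{\infty})/\P^{1}}$, the latter being Deligne--Mumford by Theorem~\ref{thm:rootstackDM} since $(m,p)=1$).

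First I would note that being a stack (descent for the \'{e}tale topology), being algebraic, and being Deligne--Mumford are properties of a CFG that are invariant under equivalence of categories fibred in groupoids: an equivalence induces bijections on descent data and transports any smooth/\'{e}tale presentation $U\to\P(1,m)$ to one for $\DIV^{[1,m]}$, and likewise transports the diagonal. So the isomorphism of Proposition~\ref{prop:ASlinebundlesectionclassification} allows us to pull back the DM-stack structure of $\P(1,m)$ to $\DIV^{[1,m]}$.

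Concretely, the smooth \'{e}tale atlas is transparent: the quotient presentation $\A^{2}\smallsetminus\{0\}\to [\A^{2}\smallsetminus\{0\}/\G_{m}]\cong\P(1,m)$ corresponds under the equivalence to a presentation of $\DIV^{[1,m]}$ by the scheme of triples $(\orb,s,f)$ with $s,f$ coordinates on $\A^{2}\smallsetminus\{0\}$ (and the $\G_{m}$-action being the change-of-trivialization action with weights $(1,m)$). The stabilizers at geometric points are subgroups of $\mu_{m}\subset\G_{m}$, which are \'{e}tale group schemes over $k$ since $(m,p)=1$, confirming the DM property.

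There is essentially no obstacle here; the only thing to keep straight is that Proposition~\ref{prop:ASlinebundlesectionclassification} was proved at the level of CFGs (via Lemma~\ref{lem:CFGequiv}) rather than as a morphism of pre-existing stacks, so one should verify that algebraicity/Deligne--Mumford-ness is genuinely a property that transfers along such an equivalence, which is standard.
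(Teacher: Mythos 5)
Your proof is correct and matches the paper's (implicit) argument exactly: the corollary is meant to follow immediately by combining the isomorphism $\DIV^{[1,m]}\cong\P(1,m)$ of Proposition~\ref{prop:ASlinebundlesectionclassification} with the fact from Lemma~\ref{lem:weightedprojrootstack} that $\P(1,m)$ is Deligne--Mumford (via Theorem~\ref{thm:rootstackDM}, using $(m,p)=1$). Your added remark about transporting the DM property along an equivalence of categories fibred in groupoids is a reasonable point of care but does not constitute a different route.
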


\begin{rem}
It also follows from Lemma~\ref{lem:weightedprojrootstack} and Proposition~\ref{prop:ASlinebundlesectionclassification} that $\DIV^{[1,m]}$ is a root stack, namely $\sqrt[m]{(\orb(1),s_{\infty})/\P^{1}}$ but it will be useful for later arguments to exhibit this isomorphism directly, which we do now. For any scheme $X$, a functor 
$$
\Upsilon : \sqrt[m]{(\orb(1),s_{\infty})/\P^{1}}(X) \longrightarrow \DIV^{[1,m]}(X)
$$
can be built in the following way. For an object $\B = (X\xrightarrow{\varphi}\P^{1},L,t,L^{m}\xrightarrow{\sim}\varphi^{*}\orb(1))$ of the root stack, the morphism $\varphi$ induces two sections $\varphi^{*}s_{0},\varphi^{*}s_{\infty}\in H^{0}(X,\varphi^{*}\orb(1))$ and under the isomorphism $L^{m}\cong\varphi^{*}\orb(1)$, $\varphi^{*}s_{\infty}$ may be identified with $t^{m}$. Set $\Upsilon(\B) = (L,s,f)$ where $s = t = s_{\infty}^{1/m}$ and $f = s_{0}$. Naturality of $\Upsilon$ is clear from the definitions of morphisms in each category. One can check this gives the same isomorphism of stacks as Proposition~\ref{prop:ASlinebundlesectionclassification}. 
\end{rem}

As in Section~\ref{sec:kummerrootstack}, we extend the definition of $\DIV^{[1,m]}$ to a stacky base $\X$ by taking $\DIV^{[1,m]}(\X)$ to be the category of triples $(\L,s,f)$ where $\L$ is a line bundle on $\X$ and $s$ and $f$ are sections on $\L$ and $\L^{m}$, respectively, i.e.~a choice of section $s_{T}\in H^{0}(T,\E_{T})$ for each scheme $T\rightarrow\X$, compatible with morphisms $T'\rightarrow T$, and likewise for $f$. Then Proposition~\ref{prop:ASlinebundlesectionclassification} implies the following. 

\begin{cor}
\label{cor:ASlinebundlesectionclassification}
Let $\X$ be a Deligne--Mumford stack. For each $m\geq 1$, there is an equivalence of categories 
$$
\DIV^{[1,m]}(\X) \xrightarrow{\;\sim\;} \Hom_{\cat{Stacks}}(\X,\P(1,m)). 
$$
\end{cor}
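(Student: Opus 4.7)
The plan is to bootstrap from the scheme-level isomorphism of categories fibred in groupoids $\DIV^{[1,m]} \cong \P(1,m)$ established in Proposition~\ref{prop:ASlinebundlesectionclassification}, mirroring the extension of Proposition~\ref{prop:linebundlesectionclassification} to the stacky base carried out in Corollary~\ref{cor:linebundlesectionclassification}. The key observation is that both sides of the desired equivalence are assembled from their values on schemes mapping to $\X$: on the one hand, a morphism of CFGs $h : \X \to \P(1,m)$ is the same data as a compatible family of morphisms $h_{T} : T \to \P(1,m)$ indexed by schemes $T$ equipped with a morphism to $\X$; on the other hand, by the definition of a vector bundle and its sections on a stack recalled just before the statement, a triple $(\L, s, f) \in \DIV^{[1,m]}(\X)$ is exactly a compatible family of triples $(\L_{T}, s_{T}, f_{T}) \in \DIV^{[1,m]}(T)$ indexed the same way.

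To construct the functor $\DIV^{[1,m]}(\X) \to \Hom_{\cat{Stacks}}(\X, \P(1,m))$, I take a triple $(\L, s, f)$ on $\X$, pull back along each $T \to \X$ to obtain $(\L_{T}, s_{T}, f_{T})$, and convert this via Proposition~\ref{prop:ASlinebundlesectionclassification} into a morphism $h_{T} : T \to \P(1,m)$. For any $\varphi : T' \to T$ over $\X$, the equation $h_{T'} = h_{T} \circ \varphi$ follows from the naturality over $\cat{Sch}_{k}$ of the isomorphism of Proposition~\ref{prop:ASlinebundlesectionclassification} combined with the pullback-compatibility built into $(\L, s, f)$ being global data on $\X$. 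The resulting compatible family defines a morphism $\X \to \P(1,m)$.

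The inverse functor is built symmetrically: given $h : \X \to \P(1,m)$, each composition $h \circ (T \to \X)$ produces a triple $(\L_{T}, s_{T}, f_{T})$ on $T$ via Proposition~\ref{prop:ASlinebundlesectionclassification}, and the resulting family is automatically compatible under pullback because $h$ itself is a single morphism of CFGs; this family therefore is precisely the data of a single triple $(\L, s, f) \in \DIV^{[1,m]}(\X)$. Functoriality of both assignments in morphisms of triples, and in $2$-morphisms of stack maps, is inherited directly from the scheme-level statement.

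The one step that needs attention is that an equivalence of categories requires bijections on morphism sets, not only on objects. This is not really an obstacle because Proposition~\ref{prop:ASlinebundlesectionclassification} is already a genuine isomorphism of CFGs, so morphisms of triples correspond bijectively to $2$-cells between $T$-points of $\P(1,m)$; the definitions of a morphism of stacks and of a morphism of sections of line bundles then guarantee that these fibrewise bijections assemble consistently, yielding the claimed equivalence.
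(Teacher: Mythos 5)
Your argument is correct and is essentially the paper's own deduction: the paper states this corollary without proof as a direct consequence of Proposition~\ref{prop:ASlinebundlesectionclassification}, precisely because a line bundle with sections on $\X$ is \emph{defined} as a compatible family of such data over all schemes $T\rightarrow\X$, while a stack morphism $\X\rightarrow\P(1,m)$ is (by the $2$-Yoneda viewpoint) a compatible family of $T$-points of $\P(1,m)$. You have simply made explicit the fibrewise application of the scheme-level isomorphism and the compatibility checks that the paper leaves implicit.
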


The main feature of $\P(1,m)$ that makes it valuable to our program of study is the fact that the cyclic order $p$ isogeny $\wp : \G_{a}\rightarrow\G_{a},\alpha\mapsto \alpha^{p} - \alpha$ extends to a ramified cyclic $p$-cover $\Psi : \P(1,m)\rightarrow\P(1,m)$ given by $[u,v]\mapsto [u^{p},v^{p} - vu^{m(p - 1)}]$. Meanwhile, $\G_{a}$ acts on $\P(1,m)$ via $\alpha\cdot [u,v] = [u,v + \alpha u^{m}]$ and it is easy to check this action commutes with $\Psi$, so we get an induced morphism on the quotient stack $[\P(1,m)/\G_{a}]$. We now use $[\P(1,m)/\G_{a}]$ to construct a characteristic $p$ analogue of the root stack of \cite{cad} and \cite{agv}. 

\begin{defn}
Let $m\geq 1$ be coprime to $p$. The {\bf universal Artin--Schreier cover with ramification jump $m$} is the cover of stacks 
$$
\wp_{m} : [\P(1,m)/\G_{a}] \longrightarrow [\P(1,m)/\G_{a}]
$$
induced by $[u,v] \mapsto [u^{p},v^{p} - vu^{m(p - 1)}]$ on $\P(1,m)$ and $\alpha\mapsto\alpha^{p} - \alpha$ on $\G_{a}$. 
\end{defn}

The following definition is inspired by a short article \cite{ryd} by D.~Rydh and email correspondence between him and the author. The work in \cite{ryd} ultimately dates back to discussions between Rydh and A. Kresch in October 2010, and the present article might be viewed as a realization of some of the questions about wildly ramified stacks that first arose then. 

\begin{defn}
\label{defn:ASrootstack}
For a stack $\X$, a line bundle $\L\rightarrow\X$ and sections $s$ of $\L$ and $f$ of $\L^{m}$, the {\bf Artin--Schreier root stack} $\wp_{m}^{-1}((\L,s,f)/\X)$ is defined to be the normalized pullback of the diagram 
\begin{center}
\begin{tikzpicture}[xscale=4,yscale=2]
  \node at (0,1) (a) {$\wp_{m}^{-1}((\L,s,f)/\X)$};
  \node at (1,1) (b) {$[\P(1,m)/\G_{a}]$};
  \node at (0,0) (c) {$\X$};
  \node at (1,0) (d) {$[\P(1,m)/\G_{a}]$};
  \draw[->] (a) -- (b);
  \draw[->] (a) -- (c);
  \draw[->] (b) -- (d) node[right,pos=.5] {$\wp_{m}$};
  \draw[->] (c) -- (d);
  \node at (.15,.7) {$\nu$};
  \draw (.1,.6) -- (.2,.6) -- (.2,.8);
\end{tikzpicture}
\end{center}
where the bottom row is the composition of the morphism $\X\rightarrow\P(1,m)$ corresponding to $(\L,s,f)$ by Corollary~\ref{cor:ASlinebundlesectionclassification} and the quotient map $\P(1,m)\rightarrow [\P(1,m)/\G_{a}]$. 
\end{defn}

That is, $\wp_{m}^{-1}((\L,s,f)/\X) = \X\times_{[\P(1,m)/\G_{a}]}^{\nu}[\P(1,m)/\G_{a}]$ with respect to the universal Artin--Schreier cover $\wp_{m} : [\P(1,m)/\G_{a}]\rightarrow [\P(1,m)/\G_{a}]$. 

\begin{rem}
\label{rem:ASrootstackpts}
Proposition~\ref{prop:ASlinebundlesectionclassification} showed that for a scheme $X$, the $X$-points of $\P(1,m)$ are given by triples $(L,s,f)$ for a line bundle $L\rightarrow X$ and two non-simultaneously vanishing sections $s\in H^{0}(X,L)$ and $f\in H^{0}(X,L^{m})$. However, the classifying map used to define an Artin--Schreier root stack construction has target $[\P(1,m)/\G_{a}]$, so a global section of $L^{m}$ is sometimes more than what is necessary. In fact, the $X$-points $[\P(1,m)/\G_{a}](X)$ by definition are $\G_{a}$-torsors $P\rightarrow X$ together with $\G_{a}$-equivariant maps $P\rightarrow\P(1,m)$. Assuming $s$ is a {\it regular section} of $L$ (i.e. a nonzero divisor in each stalk), let $D = \div(s)$, so that $L = \orb_{X}(D)$, and consider the short exact sequence of sheaves
$$
\sesx{\orb_{X}}{\orb_{X}(mD)}{\orb_{X}(mD)|_{mD}}{s^{m}}{}. 
$$
This induces a long exact sequence in sheaf cohomology: 
$$
0\rightarrow H^{0}(X,\orb_{X})\rightarrow H^{0}(X,\orb_{X}(mD))\rightarrow H^{0}(mD,\orb_{X}(mD)|_{mD})\xrightarrow{\delta} H^{1}(X,\orb_{X})\rightarrow\cdots. 
$$
So one way to produce a $\G_{a}$-torsor on $X$ is to take the image under the connecting homomorphism $\delta$ of a section $f\in H^{0}(mD,\orb_{X}(mD)|_{mD})$. That is, $P = \delta(f)$ where $f$ is a section of \emph{the restriction of $L^{m}$ to the divisor} $mD$. One can show that this is bijective: the $X$-points of $[\P(1,m)/\G_{a}]$ are in one-to-one correspondence with triples $(L,s,f)$ with $(L,s)$ as usual and $f$ a section of $L^{m}$ supported on the divisor $mD$. 

Now we can give an explicit description of the points of $\wp_{m}^{-1}((L,s,f)/X)$ in the style of Remark~\ref{rem:rootstackpts}. If $X$ is a scheme, let $\V = X\times_{[\P(1,m)/\G_{a}]}[\P(1,m)/\G_{a}]$ be the actual pullback of the diagram in Definition~\ref{defn:ASrootstack}. Then for a test scheme $T$ the category $\V(T)$ consists of tuples $(T\xrightarrow{\varphi}X,M,t,g,\psi)$ where $M\rightarrow T$ is a line bundle with section $t\in H^{0}(T,M)$, $g\in H^{0}(mE,M^{m}|_{mE})$ (where $E = (t)$) and $\psi : M^{p}\xrightarrow{\sim}\varphi^{*}L$ is an isomorphism of line bundles such that 
$$
\psi(t^{p}) = \varphi^{*}s \quad\text{and}\quad \psi_{mpE}(g^{p} - t^{m(p - 1)}g) = \varphi_{mD}^{*}f
$$
where $(-)_{mpE}$ denotes the restriction to $mpE$ and likewise for $\varphi_{mD}^{*}$. By \cite[Prop.~A.7]{ab}, the $T$-points of $\wp_{m}^{-1}((L,s,f)/X) = \V^{\nu}$ has the same description when $T$ is a normal scheme. In general, the defining equations on $t$ and $g$ are more complicated. There is a similar description of the $T$-points of the Artin--Schreier root stack $\wp_{m}^{-1}((\L,s,f)/\X)$ when $\X$ is a stack. 

For our purposes, namely when $X$ (resp.~$\X$) is a curve (resp.~stacky curve), we will not need this level of control over the sections $f$. Indeed, \'{e}tale-locally, $H^{1}(X,\orb_{X}) = 0$ so any $f\in H^{0}(mD,L^{m}|_{mD})$ as above lifts to a section $F\in H^{0}(X,L^{m})$. Therefore, \'{e}tale-locally the $T$-points of $\wp_{m}^{-1}((L,s,f)/X)$ are given by $(\varphi,M,t,g,\psi)$ where $g\in H^{0}(T,M^{m})$ and the rest are as above (when $T$ is normal). While some of our results require global sections, the computations happen locally so this technical point is not a significant issue in the present article. 
\end{rem}

\begin{lem}
\label{lem:ASrootstacknatural}
For any morphism of stacks $h : \Y\rightarrow\X$ and line bundle $\L\rightarrow\X$ with sections $s$ of $\L$ and $f$ of $\L^{m}$, there is an isomorphism of algebraic stacks 
$$
\wp_{m}^{-1}((h^{*}\L,h^{*}s,h^{*}f)/\Y) \xrightarrow{\;\;\sim\;\;} \wp_{m}^{-1}((\L,s,f)/\X)\times_{\X}^{\nu}\Y. 
$$
\end{lem}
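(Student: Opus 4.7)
The plan is to unwind the definition of Artin--Schreier root stack as a normalized pullback and use the naturality of the classifying morphism $\X\rightarrow [\P(1,m)/\G_a]$ under pullback along $h$, then argue that on the level of normalizations these operations commute.

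First, I would observe that Corollary~\ref{cor:ASlinebundlesectionclassification} assigns to $(\L,s,f)$ a morphism $\chi : \X\rightarrow [\P(1,m)/\G_a]$ (the composition of the classifying map to $\P(1,m)$ with the quotient map), and by the naturality built into that corollary, the triple $(h^*\L,h^*s,h^*f)$ on $\Y$ is classified by $\chi\circ h$. Let $\V = \X\times_{[\P(1,m)/\G_a]}[\P(1,m)/\G_a]$ denote the ordinary (un-normalized) fibre product with respect to $\chi$ and $\wp_m$, so that $\wp_m^{-1}((\L,s,f)/\X) = \V^{\nu}$. By associativity of fibre products and the naturality just noted,
\[
\Y\times_{[\P(1,m)/\G_a]}[\P(1,m)/\G_a] \;\cong\; \V\times_{\X}\Y,
\]
where the left-hand side is the ordinary pullback defining $\wp_m^{-1}((h^*\L,h^*s,h^*f)/\Y)$ before normalization.

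Therefore the lemma reduces to the claim that
\[
(\V\times_{\X}\Y)^{\nu} \;\cong\; (\V^{\nu}\times_{\X}\Y)^{\nu},
\]
and both sides can be identified via the universal property of relative normalization recorded in \cite[Lem.~A.5]{ab}. I would verify this by checking that each side is terminal among normal algebraic stacks $\Zz$ equipped with a morphism to $\V\times_{\X}\Y$ that is dominant on irreducible components: given such $\Zz\rightarrow \V\times_\X\Y$, the projection to $\V$ factors uniquely through $\V^{\nu}$ because $\Zz$ is normal, so one gets a compatible morphism $\Zz\rightarrow\V^{\nu}\times_{\X}\Y$, and conversely any such morphism composes with $\V^{\nu}\times_{\X}\Y\rightarrow\V\times_{\X}\Y$. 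This produces the bijection of universal solutions and hence the desired isomorphism, which in turn matches the description of $T$-points spelled out in Remark~\ref{rem:ASrootstackpts} (the defining conditions on $(\varphi,M,t,g,\psi)$ are evidently preserved by pullback along $h$).

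The main subtlety, and the step I expect to require the most care, is the interaction between the normalization $\V^{\nu}\rightarrow\V$ and base change by the possibly non-smooth morphism $\Y\rightarrow\X$: the map $\V^{\nu}\times_{\X}\Y \rightarrow \V\times_{\X}\Y$ may fail to be an isomorphism on the nose, so one cannot simply invoke \cite[Tag 07TD]{sp}. The universal-property argument above bypasses this because it only requires this base-changed morphism to be an integral surjection that is birational on each irreducible component, which is inherited from the analogous properties of $\V^{\nu}\rightarrow\V$ since integral surjections are stable under arbitrary base change and the birationality is checked over the dense open normal locus of $\V$ pulled back to $\Y$. This gives the isomorphism stated in the lemma.
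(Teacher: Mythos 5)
Your reduction is the right first move, and it is the ``consequence of the definition'' route the paper gestures at: since $(h^{*}\L,h^{*}s,h^{*}f)$ is classified by $\chi\circ h$, with $\V = \X\times_{[\P(1,m)/\G_{a}]}[\P(1,m)/\G_{a}]$ the left-hand side is $(\V\times_{\X}\Y)^{\nu}$ and the right-hand side is by definition $(\V^{\nu}\times_{\X}\Y)^{\nu}$, so the lemma is exactly the assertion that these two normalizations agree. The gap is in how you verify that assertion. First, to factor the composite $\Zz\rightarrow\V\times_{\X}\Y\rightarrow\V$ through $\V^{\nu}$ you invoke the universal property of \cite[Lem.~A.5]{ab}, but that property only applies to morphisms from normal stacks that are \emph{dominant on irreducible components}; what you assumed is dominance onto components of $\V\times_{\X}\Y$, and the projection $\V\times_{\X}\Y\rightarrow\V$ need not be dominant when $h$ is not (e.g.\ $\Y$ a closed point of $\X$), so the factorization is neither automatic nor unique (a point landing in the non-normal locus of $\V$ lifts to $\V^{\nu}$ in several ways, or the uniqueness clause simply does not apply). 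Second, and more seriously, your closing claim that $\V^{\nu}\times_{\X}\Y\rightarrow\V\times_{\X}\Y$ is birational on each irreducible component ``because the birationality is checked over the dense open normal locus of $\V$ pulled back to $\Y$'' fails for arbitrary $h$: the pullback of a dense open along a non-flat $h$ need not be dense, and it is empty precisely in the interesting case where $h$ hits the vanishing locus of $s$, which is where $\V$ fails to be normal. Birational, or bijective-on-components, morphisms are not stable under arbitrary base change (base changing the normalization of a nodal curve along the inclusion of the node already destroys both properties), so the two universal problems you compare are not identified by your argument, and the proof as written does not establish the isomorphism for a general morphism $h$.

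What makes the statement unproblematic in the situations the paper actually uses, and what its own one-line proof leans on, is one of two things. If $h$ is flat (in particular smooth or \'{e}tale, as in Theorem~\ref{thm:ASrootstackDM}), then relative normalization commutes with the base change $\V\times_{\X}\Y\rightarrow\V$ by \cite[Tag 07TD]{sp}, quoted in Section~\ref{sec:stackycurves}, so $\V^{\nu}\times_{\X}\Y$ is already the relative normalization of $\V\times_{\X}\Y$ and there is nothing left to check; your universal-property argument is only valid under a hypothesis of this kind (flatness, or at least that every component of $\V\times_{\X}\Y$ dominates a component of $\V$ and meets the normal locus). For general $h$, the paper instead compares the two sides through their points on \emph{normal} test schemes, using the explicit description of Remark~\ref{rem:ASrootstackpts} together with \cite[Prop.~A.7]{ab}: the defining data $(\varphi,M,t,g,\psi)$ visibly pulls back along $h$, and the normalized pullbacks are determined by these points. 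To repair your write-up, either restrict to flat $h$ and cite the base-change statement, or replace your last two paragraphs with this $T$-point comparison.
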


\begin{proof}
As before, this is an immediate consequence of the definition or the explicit description in Remark~\ref{rem:ASrootstackpts}. 
\end{proof}

\begin{ex}
\label{ex:ASrootstackP1}
Let $X = \P^{1} = \Proj k[x_{0},x_{1}]$ and suppose $Y\rightarrow X$ is the smooth projective model of the one-point cover given by the affine Artin--Schreier equation $y^{p} - y = x^{-m}$. Then $Y$ admits an additive $\Z/p\Z$-action such that $Y\rightarrow X$ is a Galois cover with group $\Z/p\Z$. There is an isomorphism of stacks 
$$
\wp_{m}^{-1}((\orb(1),x_{0},x_{1}^{m})/\P^{1}) \cong [Y/(\Z/p\Z)]
$$
which we describe now. As in Remark~\ref{rem:ASrootstackpts}, let $\V$ be the pullback $\P^{1}\times_{[\P(1,m)/\G_{a}]}[\P(1,m)/\G_{a}]$ so that $\wp_{m}^{-1}((\orb(1),x_{0},x_{1}^{m})/\P^{1}) = \V^{\nu}$, the normalized pullback. Also let $Y_{0}\rightarrow\P^{1}$ be the projective closure of the affine curve given by the equation $y^{p}x^{m} - yx^{m} = 1$, which is in general not normal. Then by \cite[Prop.~A.7]{ab}, it is enough to construct an isomorphism $\V(T)\cong [Y_{0}/(\Z/p\Z)](T)$ for any \emph{normal} test scheme $T$. By Remark~\ref{rem:ASrootstackpts}, assuming $T$ is ``local enough'', the category $\V(T)$ consists of tuples $(T\xrightarrow{\varphi}\P^{1},L,s,f,\psi)$ where $\psi : L^{p}\xrightarrow{\sim} \varphi^{*}\orb(1)$ is an isomorphism such that $\psi(s^{p}) = \varphi^{*}x_{0}$ and $\psi(f^{p} - fs^{m(p - 1)}) = \varphi^{*}x_{1}^{m}$. Define a mapping 
\begin{align*}
  \V(T) &\longrightarrow [Y/(\Z/p\Z)](T)\\
    (\varphi,L,s,f,\psi) &\longmapsto \left (\tikz[scale=1.2,baseline=12]{
  \node at (0,1) (a) {$\widetilde{L}$};
  \node at (1,1) (b) {$Y$};
  \node at (0,0) (c) {$T$};
  \draw[->] (a) -- (b);
  \draw[->] (a) -- (c);
}\right )
\end{align*}
where $\widetilde{L}\rightarrow T$ is the $\Z/p\Z$-bundle obtained by first constructing a $\G_{a}$-bundle $P\rightarrow T$ and showing its transition maps actually take values in $\Z/p\Z\subseteq\G_{a}$. 

Fix a cover $\{U_{i}\rightarrow T\}$ over which $L\rightarrow T$ is trivial; we may choose the $U_{i}$ small enough so that on each, either $s$ or $f$ is nonzero. Then $P$ can be constructed by specifying transition functions $\varphi_{ij} : U_{i}\cap U_{j}\rightarrow\G_{a}$ for any pair $U_{i},U_{j}$ in the cover. Let $s_{i} = s|_{U_{i}}$ and $f_{i} = f|_{U_{i}}$. There are three cases to consider. First, if $s$ is nonzero on both $U_{i}$ and $U_{j}$, then we let 
$$
\varphi_{ij} : t \longmapsto \frac{f_{i}(t)}{s_{i}(t)^{m}} - \frac{f_{j}(t)}{s_{j}(t)^{m}}\in\G_{a}. 
$$
If $s$ vanishes somewhere on $U_{i}$ and $U_{j}$, then set 
$$
\varphi_{ij} : t \longmapsto \frac{s_{i}(t)^{m}}{f_{i}(t)} - \frac{s_{j}(t)^{m}}{f_{j}(t)}. 
$$
Finally, if $s$ does not vanish on $U_{i}$ but does vanish somewhere on $U_{j}$, we let 
$$
\varphi_{ij} : t \longmapsto \frac{f_{i}(t)}{s_{i}(t)^{m}} - \frac{s_{j}(t)^{m}}{f_{j}(t)}. 
$$
It is easy to see that if $s$ is nonzero on three charts $U_{i},U_{j},U_{k}$, then the transition maps satisfy the additive cocycle relation $\varphi_{ij} + \varphi_{jk} + \varphi_{ki} = 0$. Similarly, for $s$ vanishing on any combination of $U_{i},U_{j},U_{k}$, the same cocycle relation holds, e.g.~if $s$ vanishes on $U_{i}$ but not on $U_{j}$ or $U_{k}$, then 
$$
\varphi_{ij} + \varphi_{jk} + \varphi_{ki} = \frac{s_{i}(t)^{m}}{f_{i}(t)} - \frac{f_{j}(t)}{s_{j}(t)^{m}} + \frac{f_{j}(t)}{s_{j}(t)^{m}} - \frac{f_{k}(t)}{s_{k}(t)^{m}} + \frac{f_{k}(t)}{s_{k}(t)^{m}} - \frac{s_{i}(t)^{m}}{f_{i}(t)} = 0. 
$$
Therefore $(\varphi_{ij})$ defines a $\G_{a}$-bundle $P\rightarrow T$ which is trivial over the cover $\{U_{i}\}$. One can show that the total space of $P$ is $(L\smallsetminus\{0\}\times L^{m})/\G_{m}$ over the locus where $s$ vanishes, while over the locus where $f$ vanishes, the total space is $(L\times L^{m}\smallsetminus\{0\})/\G_{m}$, where $\G_{m}$ acts on fibres with weights $(1,m)$ in both cases. Over any of the $U_{i}$, $P$ is trivialized by 
\begin{align*}
  \varphi_{i} : U_{i}\times\G_{a} &\longrightarrow P|_{U_{i}}\\
    (t,\alpha) &\longmapsto [s(t),f(t) + \alpha s(t)^{m}]. 
\end{align*}
Alternatively, $P$ can be defined as in Remark~\ref{rem:ASrootstackpts}. By construction, the transition maps are all $\Z/p\Z$-valued since $s$ and $f$ satisfy the equation $f^{p} - fs^{m(p - 1)} = \varphi^{*}x_{1}^{m}$, which over each type of trivialization looks like one of 
$$
\left (\frac{f}{s^{m}}\right )^{p} - \frac{f}{s^{m}} = \left (\frac{\varphi^{*}x_{1}}{s^{p}}\right )^{m} \qquad\text{or}\qquad \left (\frac{s^{m}}{f}\right )^{p} - \frac{s^{m}}{f} = -\frac{(s\varphi^{*}x_{1})^{m}}{f^{p + 1}}. 
$$
Thus $(\varphi_{ij})$ actually determine a $\Z/p\Z$-bundle $\widetilde{L}\rightarrow T$. One can also consider the Artin--Schreier sequence 
$$
\sesx{\Z/p\Z}{\G_{a}}{\G_{a}}{}{\wp}. 
$$
This induces an exact sequence 
$$
H^{1}(T,\Z/p\Z)\rightarrow H^{1}(T,\G_{a})\xrightarrow{\wp} H^{1}(T,\G_{a}). 
$$
Then $\wp(P)$ is the $\G_{a}$-bundle with transition functions $\wp(\varphi_{ij}) = \varphi_{ij}^{p} - \varphi_{ij}$, so using the defining equation for $s$ and $f$, one can again see that $\wp(\varphi_{ij})\in Z^{1}(T,\G_{a})$ is trivial in $H^{1}(T,\G_{a})$. Therefore by exactness, $P$ lifts to $\widetilde{L}\in H^{1}(T,\Z/p\Z)$. 

Next, there is a map $\widetilde{L}\rightarrow Y_{0}$ given by sending 
$$
(t,0)\in\widetilde{L}_{t} \longmapsto \left (\frac{s(t)}{\varphi^{*}x_{1}(t)},\frac{f(t)}{s(t)^{m}}\right )
$$
and extending by the $\Z/p\Z$-action on the fibre $\widetilde{L}_{t}$. That is, the entire fibre over $t$ is mapped to the Galois orbit of a corresponding point on $Y_{0}$. The resulting morphism 
$$
\V(T)\rightarrow [Y_{0}/(\Z/p\Z)](T)
$$
is an isomorphism for all $T$, and it is easy to check this is functorial in $T$. This proves that for any sufficiently local test scheme $T$ (in the sense of Remark~\ref{rem:ASrootstackpts}) which is normal, there is an isomorphism $\V(T)\xrightarrow{\sim}[Y_{0}/(\Z/p\Z)](T)$. The argument can be extended to all normal schemes $T$, so by \cite[Prop.~A.7]{ab}, we get an isomorphism of stacks $\wp_{m}^{-1}((\orb(1),x_{0},x_{1}^{m})/\P^{1})\cong [Y/(\Z/p\Z)]$. 

A similar analysis shows that for an arbitrary curve $X$ and for any $F\in k(X)\smallsetminus\wp(k(X))$, there is an isomorphism 
$$
\wp_{m}^{-1}((L,\sigma,\tau)/X)\xrightarrow{\;\sim\;} [Y_{F}/(\Z/p\Z)]
$$
where $(L,\sigma)$ corresponds to the divisor $\div(F)\in\Div(X)$, $\tau$ is a section of $L^{m}$ such that $\sigma$ restricts to a local parameter at any zero of $\tau$, $Y_{F}$ is the Galois cover of $X$ with birational Artin--Schreier equation $y^{p} - y = F(x)$. 
\end{ex}

\begin{ex}
When $m\equiv -1\pmod{p}$, we even have $\wp_{m}^{-1}((\orb(1),x_{0},x_{1}^{m})/\P^{1})\cong \V$, i.e.~the pullback of $\P^{1}$ along the universal Artin--Schreier cover $\wp_{m} : [\P(1,m)/\G_{a}]\rightarrow [\P(1,m)/\G_{a}]$ is already a normal stack. To show this, we need to write down an integral equation for $\wp_{m}^{-1}((\orb(1),x_{0},x_{1})/\P^{1})\times_{\P^{1}}T$ for any sufficiently local test scheme $T$ and show that it can be mapped to an integral equation for $Y\times_{\P^{1}}T\rightarrow\P_{T}^{1}$. In fact, the equation $f^{p} - fs^{m(p - 1)} = \varphi^{*}x_{1}^{m}$ on the level of sections can be written 
$$
\left (\frac{fs}{\varphi^{*}x_{1}^{n}}\right )^{p} - \left (\frac{fs}{\varphi^{*}x_{1}^{n}}\right )\left (\frac{s^{p}}{\varphi^{*}x_{1}}\right )^{n(p - 1)} = \frac{s^{p}}{\varphi^{*}x_{1}}
$$
where, as in the notation of Lemma~\ref{lem:ASinteqn}, $m + 1 = pn$. This pulls back to a similar equation on each $\wp_{m}^{-1}((\orb(1),x_{0},x_{1}^{m})/\P^{1})\times_{\P^{1}}T$ which then maps to $z^{p} - zx^{n(p - 1)} = x$ on $Y\times_{\P^{1}}T$ by sending 
$$
\left (\frac{fs}{\varphi^{*}x_{1}^{n}}\right ) \longmapsto z \quad\text{and}\quad \left (\frac{s^{p}}{\varphi^{*}x_{1}}\right ) \longmapsto x. 
$$
By Lemma~\ref{lem:ASinteqn}, $z^{p} - zx^{n(p - 1)} = x$ is an integral equation for $Y\times_{\P^{1}}T\rightarrow\P_{T}^{1}$ in this case since $m = pn - 1$. 
\end{ex}


In general, every Artin--Schreier root stack $\wp_{m}^{-1}((L,s,f)/X)$ can be covered in the \'{e}tale topology by ``elementary'' Artin--Schreier root stacks of the form $[Y/(\Z/p\Z)]$ as above -- this is completely analogous to the Kummer case described by Proposition~\ref{prop:coverbyrootstacks} but here is the formal statement. 

\begin{prop}
\label{prop:ASrootstacklocalquotient}
Let $\X = \wp_{m}^{-1}((L,s,f)/X)$ be an Artin--Schreier root stack of a scheme $X$ with jump $m$ along a triple $(L,s,f)$ and let $\pi : \X\rightarrow X$ be the coarse map. Then for any point $\bar{x} : \Spec k\rightarrow\X$, there is an \'{e}tale neighborhood $U$ of $x = \pi\circ\bar{x}$ such that $U\times_{X}\X\cong [Y/(\Z/p\Z)]$, where $Y$ is the Galois $\Z/p\Z$-cover of $U$ given by an affine Artin--Schreier equation $y^{p} - y = F(x)$. 
\end{prop}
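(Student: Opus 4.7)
The plan is to mimic the proof of Proposition~\ref{prop:coverbyrootstacks} for Kummer root stacks, reducing to the étale-local case via naturality and then invoking the explicit identification supplied at the end of Example~\ref{ex:ASrootstackP1}.

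First, I would use Lemma~\ref{lem:ASrootstacknatural} to show that the formation of the Artin--Schreier root stack commutes with étale base change on $X$. Given any étale morphism $h : U \to X$, the lemma yields $\wp_{m}^{-1}((h^{*}L, h^{*}s, h^{*}f)/U) \cong \wp_{m}^{-1}((L,s,f)/X) \times_{X}^{\nu} U$; since the normalization commutes with smooth (hence étale) base change by \cite[Tag 07TD]{sp}, the normalized pullback agrees with the ordinary fibre product $\X \times_{X} U$ in this situation. Consequently, it suffices to verify the conclusion after passing to an arbitrary étale neighborhood of $x$.

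Next, I would choose an affine étale neighborhood $U = \Spec A \to X$ of $x = \pi \circ \bar{x}$ over which the line bundle $L$ is trivial. Fixing such a trivialization, the pullback triple $(L|_{U}, s|_{U}, f|_{U})$ becomes $(\orb_{U}, \sigma, \tau)$ for some $\sigma \in A$ and $\tau \in A$. Since $s$ and $f$ cannot vanish simultaneously (this is built into the definition of $\DIV^{[1,m]}$ via $\P(1,m) = [\A^{2} \smallsetminus \{0\}/\G_{m}]$), after possibly shrinking $U$ we may assume that at least one of $\sigma, \tau$ is a unit on $U$.

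Now I would invoke the final paragraph of Example~\ref{ex:ASrootstackP1}, which provides exactly the identification needed in the trivialized setting: setting $F = \tau/\sigma^{m} \in k(U)$ (or, on the locus where $\sigma$ vanishes but $\tau$ does not, the analogous expression $\sigma^{m}/\tau$ after the birational modification described there), one has an isomorphism
\[
\wp_{m}^{-1}((\orb_{U}, \sigma, \tau)/U) \xrightarrow{\;\sim\;} [Y_{F}/(\Z/p\Z)],
\]
where $Y_{F} \to U$ is the Galois $\Z/p\Z$-cover birationally given by the Artin--Schreier equation $y^{p} - y = F(x)$. Combined with the first step, this yields $U \times_{X} \X \cong [Y_{F}/(\Z/p\Z)]$.

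The main technical obstacle is the case analysis at points where $s$ or $f$ vanishes: the construction in Example~\ref{ex:ASrootstackP1} uses different local charts for the $\G_{a}$-bundle depending on which of $s, f$ is nonvanishing, and one must check these charts glue correctly to yield a well-defined Galois $\Z/p\Z$-cover on the given étale neighborhood. The normalization in Definition~\ref{defn:ASrootstack} is what replaces the potentially non-normal fibre product with the honest cover $Y_{F}$, and the fact that this normalization pulls back through étale maps (cited above) is the key input that makes the étale-local reduction legitimate.
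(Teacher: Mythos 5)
Your proposal follows exactly the route the paper intends: the paper states Proposition~\ref{prop:ASrootstacklocalquotient} without a written proof, declaring it ``completely analogous'' to Proposition~\ref{prop:coverbyrootstacks}, whose proof is precisely the combination you use --- the naturality Lemma~\ref{lem:ASrootstacknatural} plus the elementary identification at the end of Example~\ref{ex:ASrootstackP1} after trivializing $L$ on an affine \'{e}tale neighborhood. Your added observation that \'{e}tale base change preserves normality, so the normalized pullback coincides with the ordinary fibre product $U\times_{X}\X$, is a correct and worthwhile detail that the paper leaves implicit (your aside about using $\sigma^{m}/\tau$ as a ``different $F$'' is a slight misstatement --- it is only the other chart for the $\G_{a}$-torsor, while $F = \tau/\sigma^{m}$ works globally birationally --- but this does not affect the argument).
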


Also, we have the following analogue of Theorem~\ref{thm:rootstackDM} for Artin--Schreier root stacks: 

\begin{thm}
\label{thm:ASrootstackDM}
If $\X$ is a Deligne--Mumford stack over a perfect field $k$ of characteristic $p > 0$, $m$ is an integer relatively prime to $p$ and $\L\rightarrow\X$ is a line bundle with sections $s$ of $\L$ and $f$ of $\L^{m}$, then $\wp_{m}^{-1}((\L,s,f)/\X)$ is a Deligne--Mumford stack. 
\end{thm}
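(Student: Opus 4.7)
The plan is to mimic the strategy of Theorem~\ref{thm:rootstackDM}: being Deligne--Mumford is \'{e}tale-local on the target, so it suffices to produce, for each geometric point, an \'{e}tale neighborhood on which the Artin--Schreier root stack becomes a quotient stack by a finite \'{e}tale group scheme.

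First I would reduce to the case when the base is a scheme. Choose an \'{e}tale atlas $U\rightarrow\X$ with $U$ a scheme, and let $\L_{U},s_{U},f_{U}$ denote the pullbacks to $U$. By Lemma~\ref{lem:ASrootstacknatural},
$$
\wp_{m}^{-1}((\L,s,f)/\X)\times_{\X}^{\nu}U\;\cong\;\wp_{m}^{-1}((\L_{U},s_{U},f_{U})/U).
$$
Since \'{e}tale morphisms are smooth, relative normalization commutes with this base change (by the lemma cited before Definition~2.10 in the excerpt), so the \'{e}tale map $U\rightarrow\X$ pulls back to an \'{e}tale map from $\wp_{m}^{-1}((\L_{U},s_{U},f_{U})/U)$ to $\wp_{m}^{-1}((\L,s,f)/\X)$. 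Thus it is enough to prove the theorem when $\X=X$ is a scheme.

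Next I would appeal to Proposition~\ref{prop:ASrootstacklocalquotient}: for every geometric point $\bar{x}:\Spec k\rightarrow\wp_{m}^{-1}((L,s,f)/X)$ there exists an \'{e}tale neighborhood $U\rightarrow X$ of the image of $\bar{x}$ such that
$$
U\times_{X}\wp_{m}^{-1}((L,s,f)/X)\;\cong\;[Y/(\Z/p\Z)],
$$
where $Y\rightarrow U$ is the Galois $\Z/p\Z$-cover given by an affine Artin--Schreier equation $y^{p}-y=F(x)$. Because $k$ has characteristic $p$, the group $\Z/p\Z$ is the \emph{constant} group scheme (as opposed to $\mu_{p}$ or $\alpha_{p}$), and hence is finite and \'{e}tale over $k$. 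Consequently the morphism $Y\rightarrow[Y/(\Z/p\Z)]$ is \'{e}tale and surjective, so by the proposition following Example~\ref{ex:quotientstack} the quotient $[Y/(\Z/p\Z)]$ is Deligne--Mumford. Assembling these \'{e}tale-local quotient presentations along a cover of $\wp_{m}^{-1}((L,s,f)/X)$ produces an \'{e}tale atlas from a scheme, which combined with the representability and separation properties inherited from the pullback/normalization construction shows that $\wp_{m}^{-1}((L,s,f)/X)$ is Deligne--Mumford.

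The main obstacle is ensuring that the reduction steps genuinely preserve the Deligne--Mumford property. Specifically, one must know (i) that the ordinary pullback $\X\times_{[\P(1,m)/\G_{a}]}[\P(1,m)/\G_{a}]$ is already algebraic, so that its relative normalization is defined and algebraic (this follows from the fact that $\wp_{m}$ is representable and the results recalled from Appendix A of \cite{ab}), and (ii) that relative normalization preserves the Deligne--Mumford property, which holds because the normalization morphism is representable by schemes. Once these technical points are in place, the substantive content is encapsulated entirely in Proposition~\ref{prop:ASrootstacklocalquotient}; the key conceptual input is the étaleness of the constant group scheme $\Z/p\Z$ in characteristic $p$, which is precisely what fails for the naive Frobenius-type cover $[\A^{1}/\G_{m}]\xrightarrow{p}[\A^{1}/\G_{m}]$ and motivated the entire Artin--Schreier root stack construction.
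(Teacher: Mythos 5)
Your proposal is correct and follows essentially the same route as the paper: reduce to a scheme chart by pulling back along an \'{e}tale atlas via Lemma~\ref{lem:ASrootstacknatural} (with the normalized and ordinary pullbacks agreeing since normalization commutes with smooth base change), then identify the result \'{e}tale-locally with a quotient $[Y/(\Z/p\Z)]$, which is Deligne--Mumford because $\Z/p\Z$ is a finite \'{e}tale group scheme. The only cosmetic difference is that the paper trivializes $\L$ on the chart and invokes Example~\ref{ex:ASrootstackP1} directly, where you route through Proposition~\ref{prop:ASrootstacklocalquotient}, whose content is exactly that example combined with the naturality lemma.
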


\begin{proof}
Take an \'{e}tale map $p : U\rightarrow\X$ such that $p^{*}\L$ is trivial on the scheme $U$. It suffices to show $\wp_{m}^{-1}((\L,s,f)/\X)\times_{\X}U$ is Deligne--Mumford. The corresponding map $U\rightarrow\P(1,m)$ induced by $(p^{*}\L,p^{*}s,p^{*}f)$ lifts the composition $U\rightarrow \X\rightarrow [\P(1,m)/\G_{a}]$ along the quotient map $\P(1,m)\rightarrow [\P(1,m)/\G_{a}]$. Consider the composition $U\rightarrow\P(1,m)\rightarrow\P^{1}$. By Lemma~\ref{lem:ASrootstacknatural}, 
$$
\wp_{m}^{-1}((\L,s,f)/\X)\times_{\X}U \cong \wp_{m}^{-1}((p^{*}\L,p^{*}s,p^{*}f)/U) \cong \wp_{m}^{-1}((\orb(1),x_{0},x_{1}^{m})/\P^{1})\times_{\P^{1}}U. 
$$
Then Example~\ref{ex:ASrootstackP1} shows that $\wp_{m}^{-1}((\orb(1),x_{0},x_{1}^{m})/\P^{1}) \cong [Y/(\Z/p\Z)]$ where $Y$ is a smooth scheme and $\Z/p\Z$ acts on $Y\rightarrow\P^{1}$. Since $\Z/p\Z$ is an \'{e}tale group scheme, the quotient stack $[Y/(\Z/p\Z)]$ is Deligne--Mumford (cf.~\cite[Cor.~8.4.2]{ols}). Therefore $[Y/(\Z/p\Z)]\times_{\P^{1}}U$ is Deligne--Mumford, so $\wp_{m}^{-1}((\L,s,f)/\X)\times_{\X}U$ is Deligne--Mumford as required. 
\end{proof}


Next, we give a new characterization of $\Z/p\Z$-covers of curves in characteristic $p$ using Artin--Schreier root stacks. 

\begin{thm}
\label{thm:pcoverfactorsthroughASrootstack}
Let $k$ be an algebraically closed field of characteristic $p > 0$ and suppose $Y\rightarrow X$ is a finite separable Galois cover of curves over $k$ and $y\in Y$ is a ramification point with image $x\in X$ such that the inertia group $I(y\mid x)$ is $\Z/p\Z$. Then there are \'{e}tale neighborhoods $V\rightarrow Y$ of $y$ and $U\rightarrow X$ of $x$ such that $V\rightarrow U$ factors through an Artin--Schreier root stack 
$$
V\longrightarrow \wp_{m}^{-1}((L,s,f)/U) \rightarrow U
$$
for some $m$. 
\end{thm}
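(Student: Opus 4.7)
The strategy is to reduce, via étale localization, to the situation where the portion of the cover near $y$ is a $\Z/p\Z$-Galois cover of $U$ and then invoke the explicit identification of Artin--Schreier root stacks with quotient stacks worked out in Example~\ref{ex:ASrootstackP1}. The factorization will then be nothing but the tautological composition $V\to [V/(\Z/p\Z)]\cong \wp_m^{-1}((L,s,f)/U)\to U$, where the first arrow is the quotient presentation morphism.

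First I would carry out the reduction to a $\Z/p\Z$-cover. Since the inertia group $I=I(y\mid x)$ coincides with the decomposition group (as $k$ is algebraically closed) and equals $\Z/p\Z$, a standard argument about Galois covers says that after replacing $X$ by a sufficiently small connected étale neighborhood $U$ of $x$, the preimage $\varphi^{-1}(U)$ breaks up into $[G:I]$ connected components, each a $\Z/p\Z$-Galois cover of $U$ ramified only above $x$. Let $V$ be the component containing $y$; this gives the étale neighborhood of $y$ and a cyclic degree-$p$ cover $V\to U$. Further shrinking $U$ if necessary (and replacing it by a possibly smaller étale neighborhood) by Artin--Schreier theory we can write this cover birationally by an equation $z^{p}-z=F$ for some rational function $F\in k(U)\smallsetminus \wp(k(U))$ whose only pole on $U$ lies at $x$. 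The pole order $m:=-\mathrm{ord}_{x}(F)$ is coprime to $p$ (this is the ramification jump), and by the standard normalization of Artin--Schreier equations we may assume $F$ has a pole of order exactly $m$ at $x$ and is regular elsewhere on $U$.

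Next I would extract the triple $(L,s,f)$ from the data of $F$. Set $D=\div_{\infty}(F)=mx$ on $U$ (after possibly shrinking $U$ so that $x$ is the only pole of $F$), let $L=\orb_{U}(x)$, and choose a section $s\in H^{0}(U,L)$ whose divisor is precisely $x$ (so $s$ is a local parameter at $x$). Then $s^{m}$ is a section of $L^{m}$ vanishing to order $m$ at $x$, so $f:=s^{m}F\in H^{0}(U,L^{m})$ is regular on $U$, with the property that $F=f/s^{m}$ agrees with the Artin--Schreier datum. Exactly this configuration is the one handled by the last paragraph of Example~\ref{ex:ASrootstackP1}, which yields an isomorphism
\[
\wp_{m}^{-1}((L,s,f)/U)\;\xrightarrow{\;\sim\;}\;[V/(\Z/p\Z)],
\]
since $V\to U$ is (after étale shrinkage) precisely the Galois cover with birational Artin--Schreier equation $y^{p}-y=F$. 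Composing the quotient presentation $V\to[V/(\Z/p\Z)]$ with this isomorphism and with the coarse map to $U$ yields the desired factorization $V\to \wp_{m}^{-1}((L,s,f)/U)\to U$.

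The main obstacle I anticipate is the bookkeeping in the first step: separating the global Galois cover $Y\to X$ into a single $\Z/p\Z$-piece étale-locally, and arranging that a single connected $V$ contains $y$ and maps as a $\Z/p\Z$-cover of $U$. One has to be careful to shrink $U$ enough that (a) the decomposition behavior is captured by $I=\Z/p\Z$ alone, (b) only the single ramification point above $x$ remains, and (c) the Artin--Schreier normalization can be performed with a section $s$ of a line bundle (not just a rational function) so that the triple $(L,s,f)$ genuinely makes sense as an object of $\DIV^{[1,m]}(U)$. Once this normalization is in place, the rest of the argument is formal and reduces to quoting Example~\ref{ex:ASrootstackP1}.
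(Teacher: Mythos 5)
Your argument is correct, and its endgame coincides with the paper's: both proofs reduce to a $\Z/p\Z$-cover $V\rightarrow U$ of an \'{e}tale neighborhood given by an explicit Artin--Schreier equation, then quote Example~\ref{ex:ASrootstackP1} to identify $[V/(\Z/p\Z)]$ with $\wp_{m}^{-1}((L,s,f)/U)$, the factorization being the tautological one through the quotient presentation. Where you differ is the localization step. The paper passes to the completed extension $\widehat{\orb}_{Y,y}/\widehat{\orb}_{X,x}$, normalizes the Artin--Schreier equation there, and then uses Artin approximation (henselization as a direct limit of \'{e}tale neighborhoods) together with results from Section~2 of \cite{harb} to realize the normalized one-point cover over an honest \'{e}tale neighborhood $U$; you instead split off the inertia component over a small \'{e}tale neighborhood, keep an unnormalized equation $z^{p}-z=F$ with pole only above $x$, and build the triple $(L,s,f)=(\orb_{U}(x),s,s^{m}F)$ explicitly, invoking the general statement at the end of Example~\ref{ex:ASrootstackP1}. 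This buys a concrete triple and avoids the appeal to Harbater, but the two steps you label ``standard'' are exactly where the paper spends its effort and should be justified: (a) descending the splitting into inertia components from the henselization to an actual \'{e}tale neighborhood is a finiteness/limit argument of the same nature as the paper's use of Artin approximation; and (b) the existence of $F$ regular away from the point above $x$ with pole order there prime to $p$ uses that $V$ is a $\Z/p\Z$-torsor over the affine curve $U\smallsetminus\{x\}$ (vanishing of coherent $H^{1}$ on affines in the Artin--Schreier sequence), followed by the usual reduction of the pole order by subtracting elements $\wp(h)$ whose poles lie only above $x$, which may require shrinking $U$ so that a uniformizer at that point is invertible elsewhere. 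With those two points made explicit, your proof is a complete and somewhat more self-contained variant of the paper's.
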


\begin{proof}
Let $\orb_{Y,y}$ and $\orb_{X,x}$ be the local rings at $y$ and $x$, respectively. Passing to completions, we may assume the extension $\widehat{\orb}_{Y,y}/\widehat{\orb}_{X,x}$ is isomorphic to $\widehat{A}/k[[t]]$ where $\widehat{A} = k[[t,u]]/(u^{p} - u - t^{m}g)$ for some $g\in k[[t]]$. Per an earlier discussion, after a change of formal coordinates, we may assume $g = 1$. By Artin approximation (\cite[Tag 0CAT]{sp}), the extension of henselian rings $\orb_{Y,y}^{h}/\orb_{X,x}^{h}$ also has this form, but since the henselization of a local ring is a direct limit over the finite \'{e}tale neighborhoods of the ring, there must be finite \'{e}tale neighborhoods $V_{0} = \Spec B\rightarrow Y$ of $y$ and $U_{0} = \Spec A\rightarrow X$ of $x$ such that the corresponding ring extension is of the form $B/A = B/k[t]$, where $B = k[t,u]/(u^{p} - u - t^{m})$. Furthermore, by results in section 2 of \cite{harb}, there is an \'{e}tale neighborhood $U$ of $x$ such that:
\begin{enumerate}[\quad (i)]
  \item $U_{0} = U\smallsetminus\{x\}$ is an affine curve with local coordinate $t$, and
  \item the cover $V := U\times_{X}Y\rightarrow U$ is isomorphic to the one-point Galois cover of $U$ defined by the birational Artin--Schreier equation $u^{p} - u = t^{m}$ over $U_{0}$. 
\end{enumerate}
Then $\Z/p\Z$ acts (as an \'{e}tale group scheme) on $V$ via the usual action on this Artin--Schreier extension and by Example~\ref{ex:ASrootstackP1}, $[V/(\Z/p\Z)] \cong \wp_{m}^{-1}((L,s,f)/U)$ for some $(L,s,f)$, so it follows immediately that $V\rightarrow U$ factors through this Artin--Schreier root stack. 
\end{proof}

\begin{rem}
An alternate proof of Theorem~\ref{thm:pcoverfactorsthroughASrootstack} in the spirit of \cite{gar} goes as follows. Since the morphism $\pi : Y\rightarrow X$ has abelian inertia group at $x$, by geometric class field theory (cf.~\cite[Ch.~IV, Sec.~2, Prop.~9]{ser}) there is a modulus $\frak{m}$ on $X$ with support contained in the branch locus of $\pi$ and a rational map $\varphi : X\rightarrow J_{\frak{m}}$ to the generalized Jacobian of $X$ with respect to this modulus, such that $Y\cong \varphi^{*}J'$ for a cyclic isogeny $J'\rightarrow J_{\frak{m}}$ of degree $p$. Meanwhile, every such isogeny $J'\rightarrow J_{\frak{m}}$ is a pullback of the Artin--Schreier isogeny $\wp : \G_{a}\rightarrow\G_{a},\alpha\mapsto\alpha^{p} - \alpha$: 
\begin{center}
\begin{tikzpicture}[scale=2]
  \node at (0,1) (a) {$J'$};
  \node at (1,1) (b) {$\G_{a}$};
  \node at (2,1) (c) {$\P(1,m)$};
  \node at (0,0) (d) {$J_{\frak{m}}$};
  \node at (1,0) (e) {$\G_{a}$};
  \node at (2,0) (f) {$\P(1,m)$};
  \draw[->] (a) -- (b);
  \draw[right hook ->] (b) -- (c);
  \draw[->] (a) -- (d);
  \draw[->] (b) -- (e) node[right,pos=.5] {$\wp$};
  \draw[->] (c) -- (f) node[right,pos=.5] {$\wp_{m}$};
  \draw[->] (d) -- (e);
  \draw[right hook ->] (e) -- (f);
\end{tikzpicture}
\end{center}
Let $U'$ be an \'{e}tale neighborhood of $X$ on which $\varphi$ is defined and set $U = U'\cup\{x\}$. Then over $U$, $\varphi$ can be extended to a morphism $\overline{\varphi} : U\rightarrow\P(1,m)$ such that $x$ maps to the stacky point at infinity, where $m = \ord_{x}\frak{m} - 1$ (This $m$ is also equal to $\cond_{Y/X}(x) - 1$, the conductor of the extension minus $1$.) By Proposition~\ref{prop:ASlinebundlesectionclassification}, $\overline{\varphi}$ may also be defined by specifying a triple $(L,s,f)$ on $U$: the pair $(L,s)$ corresponds to the effective divisor $D = x$ and $f$ is a section of $\orb(mD) = L^{\otimes m}$ determined from the affine equation expressing the pullback of the isogeny $J'\rightarrow J_{\frak{m}}$ to $U$. 

Let $V = \pi^{*}U$ which is an \'{e}tale neighborhood of $Y$. Pulling back $(L,s,f)$ to $V$ locally defines $\overline{\psi}$ in the following diagram: 
\begin{center}
\begin{tikzpicture}[xscale=4,yscale=2]
  \node at (0,1) (a) {$\wp_{m}^{-1}((L,s,f)/U)$};
  \node at (1,1) (b) {$[\P(1,m)/\G_{a}]$};
  \node at (0,0) (c) {$U$};
  \node at (1,0) (d) {$[\P(1,m)/\G_{a}]$};
  \node at (0,2) (a1) {$V$};
  \draw[->] (a) -- (b);
  \draw[->] (a) -- (c);
  \draw[->] (b) -- (d) node[right,pos=.5] {$\wp_{m}$};
  \draw[->] (c) -- (d) node[above,pos=.5] {$\overline{\varphi}$};
  \draw[->,dashed] (a1) -- (a);
  \draw[->] (a1) to[out=225,in=135] (c);
    \node at (-.5,1) {$\pi$};
  \draw[->] (a1) -- (b) node[above,pos=.5] {$\overline{\psi}$};
\end{tikzpicture}
\end{center}
By the universal property of $\wp_{m}^{-1}((L,s,f)/U)$, there is a map $V\rightarrow\wp_{m}^{-1}((L,s,f)/U)$ factoring $\pi$ locally as required. 
\end{rem}

\begin{thm}
\label{thm:locallyASrootstack}
Let $\X$ be a stacky curve over a perfect field $k$ of characteristic $p > 0$. Then 
\begin{enumerate}[\quad (1)]
  \item If $\X$ contains a stacky point $x$ of order $p$, there is an open substack $\U\subseteq\X$ containing $x$ such that $\U\cong\wp_{m}^{-1}((L,s,f)/U)$ where $(m,p) = 1$, $U$ is an open subscheme of the coarse space $X$ of $\X$ and $(L,s,f)\in\DIV^{[1,m]}(U)$. 
  \item Suppose all the nontrivial stabilizers of $\X$ are cyclic of order $p$. If $\X$ has coarse space $\P^{1}$, then $\X$ is isomorphic to a fibre product of Artin--Schreier root stacks of the form $\wp_{m}^{-1}((L,s,f)/\P^{1})$ for $(m,p) = 1$ and $(L,s,f)\in\DIV^{[1,m]}(\P^{1})$. 
\end{enumerate}
\end{thm}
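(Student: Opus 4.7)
The plan is to prove part~(1) by reducing $\X$ \'{e}tale-locally at $x$ to a quotient stack, applying Artin--Schreier theory to write the resulting $\Z/p\Z$-cover by a standard affine equation, and then identifying the quotient with an Artin--Schreier root stack. By Lemma~\ref{lem:locallyquotientstack}, there is an \'{e}tale neighborhood $W\to X$ of $x$ together with a finite $\Z/p\Z$-cover $V\to W$ such that $\X\times_X W \cong [V/(\Z/p\Z)]$. The only ramification of $V\to W$ lies over $x$, so completing at $x$ and invoking Artin--Schreier theory puts the local extension in the form $u^p - u = t^{-m}g(t)$ with $(m,p)=1$. Following the proof of Theorem~\ref{thm:pcoverfactorsthroughASrootstack} (a change of formal coordinates combined with Artin approximation), after shrinking $W$ we may assume that $V\to W$ is cut out globally by an equation $u^p - u = F$ for a rational function $F$ on $W$ with unique pole of order $m$ at $x$. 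Example~\ref{ex:ASrootstackP1} then produces a triple $(L,s,f)$ on $W$ with $[V/(\Z/p\Z)] \cong \wp_m^{-1}((L,s,f)/W)$, giving the desired open substack.

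For part~(2), apply part~(1) at each stacky point $x_i$ (with jump $m_i$) to extract a local Artin--Schreier defining function $F_i^{\mathrm{loc}}$ so that $\X$ is locally the $\Z/p\Z$-cover $y^p - y = F_i^{\mathrm{loc}}$ at $x_i$. The crucial step is a \emph{globalization}: by the Mittag--Leffler principle on $\P^1$ (equivalently, the vanishing $H^1(\P^1,\orb_{\P^1}) = 0$), construct a rational function $F_i \in k(\P^1)$ whose principal part at $x_i$ agrees with that of $F_i^{\mathrm{loc}}$ (up to Artin--Schreier boundary) and which is regular at every other $x_j$. By Example~\ref{ex:ASrootstackP1}, $F_i$ determines a triple $(L_i,s_i,f_i)$ on $\P^1$, and $\X_i := \wp_{m_i}^{-1}((L_i,s_i,f_i)/\P^1)$ is then a stacky curve whose only stacky point is $x_i$, with local Artin--Schreier structure matching that of $\X$ there.

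Finally, set $\Y := \X_1 \times_{\P^1} \cdots \times_{\P^1} \X_n$. The branch loci $\{x_i\}$ are pairwise disjoint, so for $i \neq j$ each $\X_j\to\P^1$ restricts to an isomorphism on a Zariski neighborhood of $x_i$; consequently $\Y$ inherits exactly the Artin--Schreier stacky structure of $\X_j$ at each $x_j$ (which by construction agrees with that of $\X$) and is simply $\P^1$ away from the stacky points. A morphism $\X\to\Y$ is built factor by factor: by Corollary~\ref{cor:ASlinebundlesectionclassification} and Lemma~\ref{lem:ASrootstacknatural}, a map $\X\to\X_i$ is the same as a compatible Artin--Schreier lift of the pulled-back triple $(\pi^*L_i,\pi^*s_i,\pi^*f_i)$ along the coarse map $\pi\colon\X\to\P^1$; this lift is supplied at $x_i$ by the data of part~(1) and is forced elsewhere because $\X_i\to\P^1$ is an isomorphism there. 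Checking that the resulting $\X\to\Y$ is an isomorphism is then an \'{e}tale-local computation at each point of $\P^1$. The main obstacle throughout is the globalization step in part~(2), which depends essentially on $H^1(\P^1,\orb)=0$; this is precisely the obstruction that blocks the analogous statement when the coarse space has positive genus.
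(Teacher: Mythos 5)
The main problem is in part (1): the statement asks for a \emph{Zariski} open substack $\U\subseteq\X$ and an \emph{open subscheme} $U\subseteq X$, but your argument only produces an \'{e}tale neighborhood. Lemma~\ref{lem:locallyquotientstack} gives an \'{e}tale neighborhood $W\rightarrow X$ (not an open immersion), and the subsequent normalizations --- putting the equation in the form $u^{p}-u=t^{-m}g(t)$, Artin approximation, Harbater's results --- all require passing to further \'{e}tale (or formal) localizations, so ``after shrinking $W$'' you still only have $\X\times_{X}W$ for $W$ \'{e}tale over $X$, which is not an open substack of $\X$. What you have reproved is essentially Theorem~\ref{thm:pcoverfactorsthroughASrootstack}; the content of part (1) beyond that theorem is precisely the passage from \'{e}tale-local to Zariski-local. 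The paper bridges this by working directly over a genuine Zariski open $U\subseteq X$ on which $x$ is the only stacky point, taking the globally defined pair $L=\orb_{U}(x)$ with its canonical section $s$ (and an $f$ supported at $x$), and then invoking the universal property of $\wp_{m}^{-1}((L,s,f)/U)$: every \'{e}tale map $T\rightarrow\U$ factors through the root stack by Theorem~\ref{thm:pcoverfactorsthroughASrootstack}, so one obtains a canonical morphism $\U\rightarrow\wp_{m}^{-1}((L,s,f)/U)$ defined on all of $\U$, which is an isomorphism because it is one \'{e}tale-locally. Some descent/globalization device of this kind is needed in your write-up; as it stands, the final claim ``giving the desired open substack'' does not follow.

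For part (2) your route genuinely differs from the paper's and is reasonable in outline: you globalize the local Artin--Schreier data by matching principal parts via $H^{1}(\P^{1},\orb_{\P^{1}})=0$, whereas the paper simply picks $F_{i}\in k(t)$ with pole order $m_{i}$ at $x_{i}$, forms the Harbater covers $y^{p}-y=F_{i}$, and constructs an equivalence $\X(T)\rightarrow\Y(T)$ directly on points, concluding by Lemma~\ref{lem:CFGequiv} and Example~\ref{ex:ASrootstackP1}; your insistence on matching principal parts (not just the jump) is a legitimate, arguably more careful, way to ensure the local structures agree. However, your construction of $\X\rightarrow\X_{i}$ leans on part (1) to supply the Artin--Schreier lift near $x_{i}$ and on the assertion that the lift is ``forced elsewhere,'' and the gluing of these two lifts over the overlap (a $2$-categorical compatibility) plus the final isomorphism check are only asserted. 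Since part (1) is where your gap lies, part (2) inherits it; once (1) is repaired in the Zariski form, the rest of your sketch can be made to work along the lines you indicate.
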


\begin{proof}
(1) Let $m$ be the unique positive integer such that for any \'{e}tale presentation $Y\rightarrow\X$ and any point $y\in Y$ mapping to $x$, the ramification jump of the induced cover $Y\rightarrow X$ at $y$ is $m$. Let $U\subseteq X$ be a subscheme such that $x$ is the only stacky point of $\U := \pi^{-1}(U)\subseteq\X$, where $\pi : \X\rightarrow X$ is the coarse map. Let $L = \orb_{U}(x)$. Then for any \'{e}tale map $\varphi : T\rightarrow\U$, there is a canonical Artin--Schreier root of the line bundle $\varphi^{*}\pi^{*}L$. Indeed, since $\pi\circ\varphi : T\rightarrow U$ is a one-point cover of curves with inertia group $\Z/p\Z$ and ramification jump $m$ at $x$, Theorem~\ref{thm:pcoverfactorsthroughASrootstack} says there are sections $s$ and $f$ such that $\pi\circ\varphi$ factors as $T\rightarrow\wp_{m}^{-1}((L,s,f)/U)\rightarrow U$. On the other hand, by the universal property of $\wp_{m}^{-1}((L,s,f)/U)$, there is a canonical morphism $\U\rightarrow\wp_{m}^{-1}((L,s,f)/U)$, which is therefore an isomorphism. 

(2) Let $B = \{x_{1},\ldots,x_{r}\}$ be the finite set of points in $\P^{1}$ covered by points in $\X$ with nontrivial automorphism groups. Since $k(\P^{1}) = k(t)$, we can choose $F_{i}\in k(t)$ having a pole of any desired order at $x_{i}$ for each $1\leq i\leq r$. For instance, if $m_{i}$ is the ramification jump at $x_{i}$ as defined in (1), then we can arrange for $\ord_{x_{i}}(F_{i}) = -m_{i}$ for each $i$. Again by section 2 of \cite{harb}, there is a proper curve $Y_{i}\rightarrow\P^{1}$ with affine Artin--Schreier equation $y^{p} - y = F_{i}(t)$, corresponding to the function field $k[t,y]/(y^{p} - y - F_{i})$ as an extension of $k(t)$. We claim 
$$
\X \cong \Y := [Y_{1}/(\Z/p\Z)]\times_{\P^{1}}\cdots\times_{\P^{1}}[Y_{r}/(\Z/p\Z)]
$$
which will prove (2) after applying Example~\ref{ex:ASrootstackP1} to each $[Y_{i}/(\Z/p\Z)]$. We construct a map $\X\rightarrow\Y$ as follows. Let $T$ be an arbitrary $k$-scheme. Since both coarse maps $\X\rightarrow\P^{1}$ and $\Y\rightarrow\P^{1}$ are isomorphisms away from $B$, we only need to specify the image of each stacky point $Q\in\X(T)$. Note that $\pi(Q)\in B$, so $\pi(Q) = x_{i}$ for some $1\leq i\leq r$. If $T = \Spec k$, then such a $Q$ is represented by a gerbe $B(\Z/p\Z)\hookrightarrow\X$, so send $Q\in\X(k)$ to the point of $[Y_{i}/(\Z/p\Z)](k)$ corresponding to 
\begin{center}
\begin{tikzpicture}[xscale=1.7,yscale=1.2]
  \node at (0,1) (a) {$\Spec k$};
  \node at (1,1) (b) {$Y_{i}$};
  \node at (0,0) (c) {$B(\Z/p\Z)$};
  \draw[->] (a) -- (b) node[above,pos=.5] {$g_{Q}$};
  \draw[->] (a) -- (c);
\end{tikzpicture}
\end{center}
where $\Spec k\rightarrow B(\Z/p\Z)$ is the universal $\Z/p\Z$-bundle and $g_{Q}$ has image $x_{i}\in Y_{i}$. Extending this to any scheme $T$ is easy: replace the above diagram with 
\begin{center}
\begin{tikzpicture}[xscale=1.7,yscale=1.2]
  \node at (0,1) (a) {$T$};
  \node at (1,1) (b) {$Y_{i}\times_{k}T$};
  \node at (0,0) (c) {$B(\Z/p\Z)\times_{k}T$};
  \draw[->] (a) -- (b) node[above,pos=.5] {$g_{Q}$};
  \draw[->] (a) -- (c);
\end{tikzpicture}
\end{center}
and define $g_{Q}$ by $t\mapsto (x_{i},t)$. This defines an equivalence of categories $\X(T)\rightarrow\Y(T)$ for any scheme $T$, so we can invoke Lemma~\ref{lem:CFGequiv} to ensure that these fibrewise equivalences assemble into an equivalence of stacks $\X\xrightarrow{\sim}\Y$. 
\end{proof}

\begin{rem}
As illustrated in in the proof of Theorem~\ref{thm:locallyASrootstack}, the ramification data at a stacky $\Z/p\Z$-point $x\in\X$ (i.e.~the ramification jump $m$) can be determined from the stack itself, namely, by which \'{e}tale covers are allowed at $x$. This is unique to the positive characteristic case; in characteristic $0$, one only needs to know the order of the stabilizer at every stacky point to understand the entire stacky structure. In contrast, for each fixed $m$ there is a \emph{family} of nonisomorphic stacky curves, with coarse space $\P^{1}$ and order $p$ stabilizers at any prescribed points, parametrized by the possible choices of $f$. 
\end{rem}

\begin{rem}
The local structure of a stacky curve in characteristic $p$ is separable since the stack is by definition Deligne--Mumford and generically a scheme. Thus around a wildly ramified point, one does not have structures like $[U/\mu_{p}]$ or $[U/\alpha_{p}]$ which would be more problematic, but will be interesting to have a description of in the future. 
\end{rem}

\begin{ex}
\label{ex:nonP1cex}
When the coarse space is not $\P^{1}$, Theorem~\ref{thm:locallyASrootstack}(2) is false. For example, let $k$ be an algebraically closed field of characteristic $p > 0$ and let $E$ be an elliptic curve over $k$ with a rational point $P$. By Riemann--Roch, $h^{0}(E,\orb(P)) = 1$ so every global section of $\orb(P)$ vanishes at $P$ and we can exploit this limitation to construct a counterexample. Let $\E$ be a stacky curve with coarse space $E$ and a stacky point of order $p$ at $P$ with ramification jump $1$. This can be achieved, by Theorem~\ref{thm:locallyASrootstack}(1), through taking a local Artin--Schreier root stack $\wp_{1}^{-1}((\orb(P),s_{P},f_{P})/U)$ where $U$ is an \'{e}tale neighborhood of $P$, $(\orb(P),s_{P})$ is the pair corresponding to the effective divisor $P$ (in $U_{P}$) and $f_{P}$ is a section of $\orb(P)|_{U_{P}}$ not vanishing at $P$. Then if $\E$ were a global Artin--Schreier root stack over $E$, there would be some $s\in H^{0}(E,\orb(P))$ vanishing at $P$ and some $f\in H^{0}(E,\orb(P))$ not vanishing at $P$ such that $\E \cong \wp_{1}^{-1}((\orb(P),s,f)/E)$ but such an $f$ does not exist by the reasoning above. Variants of this counterexample can be constructed for single stacky points with ramification jump $m > 1$, as well as multiple stacky points with the same ramification jump which cannot be obtained through a fibre product of global Artin--Schreier root stacks. This argument also works for higher genus curves, so the $\P^{1}$ case is quite special. 
\end{ex}


\section{Canonical Rings}
\label{sec:canrings}

Let $X$ be a compact complex manifold and $\Omega_{X} = \Omega_{X(\C)}$ the sheaf of holomorphic differential $1$-forms on $X$. Then in many situations $X$ can be outfitted with the structure of a complex projective \emph{variety} by explicitly embedding it into projective space using different tensor powers of $\Omega_{X}$. For example, if $X$ is a Riemann surface of genus $g\geq 2$ that is not hyperelliptic, the canonical map $X\rightarrow |\orb_{X}| \cong \P^{g - 1}$ is an embedding. More generally, any Riemann surface $X$ of genus $g\geq 2$ can be recovered as $\Proj R(X)$ where 
$$
R(X) := \bigoplus_{k = 0}^{\infty} H^{0}(X,\Omega_{X}^{k})
$$
is the {\it canonical ring} of $X$. In general, the isomorphism $X\cong\Proj R(X)$ may not hold since $\Omega_{X}$ need not be ample, but it is still a useful gadget for studying the algebraic properties of $X$. 

In \cite{vzb}, the authors extend this strategy to tame stacky curves by giving explicit generators and relations for the canonical ring 
$$
R(\X) = \bigoplus_{r = 0}^{\infty} H^{0}(\X,\Omega_{\X}^{r})
$$
of a stacky curve $\X$, where $\Omega_{\X} = \Omega_{\X/k}$ is the sheaf of differentials defined in Section~\ref{sec:divisorsbundles}. This has numerous applications, perhaps most importantly to the computation of the ring of modular forms for a given congruence subgroup of $SL_{2}(\Z)$ (see Section~\ref{sec:future}), and holds in all characteristics provided the stack $\X$ has no wild ramification. However, numerous (stacky) modular curves one would like to study in characteristic $p$, such as $X_{0}(N)$ with $p\mid N$, have wild ramification and therefore fall outside the scope of results like \cite[Thm.~1.4.1]{vzb}. In this section, we will show how the Artin--Schreier root stack construction can shed light on the local structure of such curves in characteristic $p$ and how this can be used to study canonical rings. First, we give a generalization of the Riemann--Hurwitz formula (Proposition~\ref{prop:stackyRH}) for stacky curves with arbitrary (finite) stabilizers. The upshot is that we can then compute the canonical divisor from knowledge of the canonical divisor on the coarse space and the ramification filtrations at the stacky points of $\X$. 

Let $\X$ be a stacky curve with coarse moduli space $X$ and suppose $x\in\X(k)$ is a wild stacky point, i.e.~a stacky point with stabilizer $G_{x}$ such that $p$ divides $|G_{x}|$. Then by Lemma~\ref{lem:locallyquotientstack}, $\X$ is locally given by a quotient stack of the form $[U/G_{x}]$ where $U$ is a scheme. Consider the composite $U\rightarrow [U/G_{x}]\xrightarrow{\pi} X$ where $\pi : \X\rightarrow X$ is the coarse map. Let $W$ denote the schematic image of $U$ in $X$, so that $U\rightarrow W$ is a one-point cover of curves with Galois group $G_{x}$. Then $G_{x}$ has a higher ramification filtration (for the lower numbering), say $(G_{x,i})_{i\geq 0}$. It is easy to check this filtration does not depend on the choice of $U$, so we call $(G_{x,i})_{i\geq 0}$ the {\it higher ramification filtration at the stacky point} $x$ (for the lower numbering). In the tame case, it is common to put $G_{x,0} = G_{x}$ and $G_{x,i} = 1$ for $i > 0$, so the Riemann--Hurwitz formula in the tame case is subsumed by the following formula. 

\begin{prop}[Stacky Riemann--Hurwitz]
\label{prop:wildstackyRH}
For a stacky curve $\X$ with coarse moduli space $\pi : \X\rightarrow X$, the formula 
$$
K_{\X} = \pi^{*}K_{X} + \sum_{x\in\X(k)}\sum_{i = 0}^{\infty} (|G_{x,i}| - 1)x
$$
defines a canonical divisor $K_{\X}$ on $\X$. 
\end{prop}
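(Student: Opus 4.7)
The plan is to follow the same local-to-global strategy used for the tame case in Proposition~\ref{prop:stackyRH}, but replace the cyclic-tame length computation with the classical formula for the different in terms of the higher ramification filtration. Since $\pi : \X\rightarrow X$ is an isomorphism away from the stacky points of $\X$, the divisor $K_{\X} - \pi^{*}K_{X}$ is supported on the finitely many stacky points, so it suffices to prove the claimed coefficient $\sum_{i\geq 0}(|G_{x,i}| - 1)$ at each stacky point $x$ individually. Because a canonical divisor is only defined up to linear equivalence, the content of the statement is the exact local order of vanishing of a trivializing section of $\Omega_{\X}\otimes\pi^{*}\Omega_{X}^{\vee}$ at each $x$.

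Fix a stacky point $x\in\X(k)$ with stabilizer $G_{x}$. By Lemma~\ref{lem:locallyquotientstack}, after passing to an \'{e}tale neighborhood of the image of $x$ in $X$, we may assume $\X = [U/G_{x}]$ for some scheme $U$ on which $G_{x}$ acts, with coarse space $W = U/G_{x}$. Let $f : U\rightarrow [U/G_{x}] = \X$ be the \'{e}tale quotient map. Since $f$ is \'{e}tale and $\Omega_{\X/X}$ is a line bundle (by the same argument as in Section~\ref{sec:divisorsbundles}), we have $f^{*}\Omega_{\X/X}\cong\Omega_{U/W}$. The question then reduces to computing the length of the stalk of $\Omega_{U/W}$ at the unique ramified point $y\in U$ above $x$.

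Here we invoke the classical calculation of the different for a Galois extension of complete discrete valuation rings (cf.~\cite[Ch.~IV, Prop.~4]{ser}): the valuation of the different of $\widehat{\orb}_{U,y}/\widehat{\orb}_{W,\pi(x)}$ equals $d_{y} = \sum_{i=0}^{\infty}(|G_{x,i}| - 1)$, where $(G_{x,i})_{i\geq 0}$ is the higher ramification filtration (lower numbering). Equivalently, the stalk of $\Omega_{U/W}$ at $y$ has length $d_{y}$. Since the stacky structure is supported only at $x$, pulling back along $f$ shows that a local generator of $\Omega_{\X/X}$ vanishes to order $d_{y}$ at $x$, that is, $K_{\X/X} := K_{\X} - \pi^{*}K_{X}$ has coefficient $d_{y} = \sum_{i\geq 0}(|G_{x,i}| - 1)$ at $x$. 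Summing over all stacky points of $\X$ (and using that the difference is zero elsewhere) yields the stated formula.

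The main technical point, and arguably the only nontrivial step, is the well-definedness of the local ramification data: one needs that the filtration $(G_{x,i})$ on $G_{x}$ extracted from the local cover $U\rightarrow W$ does not depend on the choice of \'{e}tale quotient presentation $[U/G_{x}]$ around $x$. This follows because the filtration is determined intrinsically by $\widehat{\orb}_{\X,x}$ viewed as a $G_{x}$-equivariant complete local ring, and any two \'{e}tale quotient neighborhoods of $x$ share the same completion at the stacky point up to $G_{x}$-equivariant isomorphism; hence they produce the same filtration. Once this independence is noted, the rest of the argument is a direct packaging of \cite[Ch.~IV]{ser} together with the local structure of $\X$ given by Lemma~\ref{lem:locallyquotientstack}, and in the tame case it specializes to Proposition~\ref{prop:stackyRH}, as desired.
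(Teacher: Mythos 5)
Your proposal is correct and follows essentially the same route as the paper: localize via Lemma~\ref{lem:locallyquotientstack} to a quotient presentation $[U/G_{x}]$, pull back $\Omega_{\X/X}$ along the \'{e}tale cover $U\rightarrow\X$, and identify the stalk length at the stacky point with the different computed by \cite[Ch.~IV, Prop.~4]{ser}, then globalize using that $\pi$ is an isomorphism away from the stacky locus. The well-definedness of the filtration $(G_{x,i})$, which you flag as the main technical point, is exactly the observation the paper makes in the discussion preceding the proposition.
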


\begin{proof}
As before, begin by assuming $\X$ has a single stacky point $P$. The tame case was proven in Proposition~\ref{prop:stackyRH} but this proof subsumes it, so in theory $P$ could have any stabilizer group $G$. Lemma~\ref{lem:locallyquotientstack} says that $\X\cong [U/G]$ for a scheme $U$. If $f : U\rightarrow [U/G]$ is the quotient map, then $f^{*}\Omega_{\X/X} \cong \Omega_{U/X}$ so the stalk of $\Omega_{\X/X}$ at $P$ has length equal to the different of the local ring extension $\widehat{\orb}_{\X,P}/\widehat{\orb}_{X,\pi(P)}$. This is precisely $\sum_{i = 0}^{\infty} (|G_{P,i}| - 1)$, by \cite[Ch.~IV, Prop.~4]{ser} for example, so the formula defines a canonical divisor on $[U/G]$. The local argument extends to the general case once again because $\X\rightarrow X$ is an isomorphism away from the stacky locus. 
\end{proof}

\begin{ex}
\label{ex:AScanonicaldiv}
Consider the Artin--Schreier cover $Y\rightarrow\P^{1}$ of Example~\ref{ex:ASrootstackP1} defined birationally by the Artin--Schreier equation $y^{p} - y = f(x)$, where $f$ is a degree $m$ polynomial. The resulting quotient stack $\X = [Y/(\Z/p\Z)]$ has a single stacky point $Q$ lying above $\infty$. Let $(G_{i})_{i\geq 0}$ be the higher ramification filtration of the inertia group at $Q$. Then the coarse space of $\X$ is $Y/(\Z/p\Z) = \P^{1}$, with coarse map $\pi : \X\rightarrow\P^{1}$, and 
$$
\pi^{*}K_{\P^{1}} + \sum_{i = 0}^{\infty} (|G_{i}| - 1)Q = -2H + \sum_{i = 0}^{m} (p - 1)Q = -2H + (m + 1)(p - 1)Q 
$$
where $H$ is a hyperplane, i.e.~a point, in $\P^{1}$ which we take to be distinct from $\infty$. Thus we can take $K_{\X} = -2H + (m + 1)(p - 1)Q$. More generally, if $f$ is a rational function with poles at $x_{1},\ldots,x_{r}$ of orders $m_{1},\ldots,m_{r}$, respectively, then 
$$
K_{\X} = -2H + (m + 1)(p - 1)Q + \sum_{j = 1}^{r} (m_{j} + 1)(p - 1)Q_{j}
$$
where $Q_{j}$ is the stacky point lying above $x_{j}$, $Q$ is again the stacky point above $\infty$ and $m = \deg(f)$. 
\end{ex}


As in the tame case, we may define the {\it Euler characteristic} $\chi(\X) = -\deg(K_{\X})$ and the {\it genus} $g(\X)$ via $\chi(\X) = 2 - 2g(\X)$. 

\begin{cor}
For a stacky curve $\X$ over an algebraically closed field $k$, with coarse space $X$, 
$$
g(\X) = g(X) + \frac{1}{2}\sum_{x\in\X(k)}\sum_{i = 0}^{\infty} \left (\frac{1}{[G_{x} : G_{x,i}]} - \frac{1}{|G_{x}|}\right )
$$
where $\pi : \X\rightarrow X$ is the coarse map. 
\end{cor}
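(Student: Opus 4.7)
The plan is to unwind the definition of $g(\X)$ and substitute the canonical divisor formula from Proposition~\ref{prop:wildstackyRH}, then simplify. Since $\chi(\X) = 2 - 2g(\X)$ and $\chi(\X) = -\deg(K_\X)$, we have $g(\X) = 1 + \tfrac{1}{2}\deg(K_\X)$, so the task reduces to computing $\deg(K_\X)$ using Proposition~\ref{prop:wildstackyRH}.

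First, I would take the degree of both sides of $K_{\X} = \pi^{*}K_{X} + \sum_{x\in\X(k)}\sum_{i\geq 0} (|G_{x,i}| - 1)x$. For the pullback term, I would argue that $\deg(\pi^{*}K_{X}) = \deg(K_{X}) = 2g(X) - 2$: the coarse map $\pi : \X \to X$ is birational (both sides have the same function field since $\X$ is generically a scheme), so divisor pullback preserves degrees when we account for the fractional coefficients via the convention $\deg P = 1/|G_P|$ on $\X$ and $\deg P = 1$ on $X$. Concretely, for a point $P \in X$ with a unique preimage $\widetilde{P} \in \X(k)$ with stabilizer of order $|G_P|$, one has $\pi^{*}P = |G_P|\widetilde{P}$ (this is consistent with Proposition~\ref{prop:Qdivcorresp}), and $\deg(|G_P|\widetilde{P}) = |G_P| \cdot \tfrac{1}{|G_P|} = 1 = \deg(P)$.

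For the ramification term, each stacky point $x$ contributes $\sum_{i\geq 0} (|G_{x,i}| - 1) \deg(x) = \sum_{i\geq 0} \frac{|G_{x,i}| - 1}{|G_x|}$. The key algebraic identity is
\[
\frac{|G_{x,i}| - 1}{|G_x|} \;=\; \frac{|G_{x,i}|}{|G_x|} - \frac{1}{|G_x|} \;=\; \frac{1}{[G_x : G_{x,i}]} - \frac{1}{|G_x|},
\]
which converts the expression into the desired form. Combining, we obtain
\[
\deg(K_\X) \;=\; 2g(X) - 2 + \sum_{x \in \X(k)} \sum_{i \geq 0} \left(\frac{1}{[G_x : G_{x,i}]} - \frac{1}{|G_x|}\right),
\]
and dividing by $2$ and adding $1$ yields the claimed formula.

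The only potential obstacle is justifying $\deg(\pi^{*}K_{X}) = \deg(K_{X})$ carefully, which amounts to the projection-formula-type statement that degrees are preserved under pullback along a coarse moduli map of stacky curves. This is implicit in the conventions of Section~\ref{sec:divisorsbundles} (in particular, the weighting $\deg\mathcal{G}_P = 1/|G_P|$ was arranged precisely so that this holds), and it is consistent with the tame version already proved in Corollary~\ref{cor:stackyRH}. The rest is pure bookkeeping with the ramification filtration indices.
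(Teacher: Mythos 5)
Your proposal is correct and is essentially the argument the paper intends: the corollary is left as an immediate consequence of Proposition~\ref{prop:wildstackyRH}, obtained exactly as you do by taking degrees (with $\deg x = 1/|G_x|$ and $\deg\pi^{*}K_X = \deg K_X = 2g(X)-2$) and rewriting $\frac{|G_{x,i}|-1}{|G_x|} = \frac{1}{[G_x:G_{x,i}]} - \frac{1}{|G_x|}$. Your justification of the degree-preservation under $\pi^{*}$ via $\pi^{*}P = |G_P|\widetilde{P}$ is also the right bookkeeping, matching the tame case in Corollary~\ref{cor:stackyRH}.
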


\begin{ex}
If $Y\rightarrow\P^{1}$ is the Artin--Schreier cover given by $y^{p} - y = f(x)$ with $f\in k[x]$ of degree $(m,p) = 1$ and $\X = [Y/(\Z/p\Z)]$, then 
$$
g(\X) = \frac{(m + 1)(p - 1)}{2p}
$$
where $m = -\ord_{\infty}(f)$. For instance, when $m = 1$, $\X = [\P^{1}/(\Z/p\Z)]$ has canonical divisor $K_{\X} = -2H + 2(p - 1)Q$ and genus $g(\X) = \frac{p - 1}{p} = 1 - \frac{1}{p}$, similar to the tame case, where $[\P^{1}/\mu_{r}]$ has genus $1 - \frac{1}{r}$. However, if $m > 0$, the genus formula again illustrates that we have infinitely many non-isomorphic stacky $\P^{1}$'s. 
\end{ex}

\begin{ex}
Let $\char k = 3$ and let $E\rightarrow\P^{1}$ be the Artin--Schreier cover defined by the equation $y^{2} = x^{3} - x$. In general, the genus of an Artin--Schreier curve in characteristic $p$ with jump $m$ is $\frac{(p - 1)(m - 1)}{2}$ so in this case we have $g(E) = 1$. Thus $E$ is an elliptic curve and it is well-known (and easy to check) that $\omega = \frac{dx}{2y}$ is a differential form on $E$. Since $\dim H^{0}(E,\Omega_{E}^{1}) = g(E) = 1$, $\omega$ is, up to multiplication by an element of $k^{\times}$, the only nonzero, holomorphic differential form on $E$. 

Meanwhile, the stack $\X = [E/(\Z/3\Z)]$ also has genus $g(\X) = \frac{(3 - 1)(2 + 1)}{6} = 1$, so we might call it a ``stacky elliptic curve''. Notice that the group action of $\Z/3\Z$ on $E$, which is induced by $x\mapsto x + 1$, leaves $\omega$ invariant. Therefore $\omega$ generates the vector space of differential forms on $\X$, which is equivalently the space of $\Z/3\Z$-invariant differential forms on $E$. However, the coarse space here is $\P^{1}$ which has $H^{0}(\P^{1},\Omega_{\P^{1}}) = 0$. 
\end{ex}

In general, if $\X$ has coarse space $X$ and admits a presentation by a scheme $f : Y\rightarrow\X$, then these three cohomology groups, $H^{0}(Y,f^{*}\Omega_{\X}),H^{0}(\X,\Omega_{\X})$ and $H^{0}(X,\Omega_{X})$, need not be the same. The genus may even \emph{increase} along $f$, although this is already true in the characteristic $0$ case. 

Once we have our hands on the canonical divisor, the next step in trying to compute the canonical ring of a stacky curve is to apply a suitable version of Riemann--Roch to $K_{\X}$ and count dimensions. When $\X$ is a tame stacky curve, we do this as follows. For a divisor $D$ on $\X$, Lemma~\ref{lem:floor} implies that $H^{0}(\X,\orb_{\X}(D)) \cong H^{0}(X,\orb_{X}(\lfloor D\rfloor)$ where $\lfloor D\rfloor$ denotes the floor divisor. Then at any tame stacky point $x$, the stabilizer group $G_{x}$ is cyclic and by the tame Riemann--Hurwitz formula (Proposition~\ref{prop:stackyRH}), the coefficient of the canonical divisor at $x$ is $\frac{|G_{x}| - 1}{|G_{x}|}$. As a consequence, for any divisor $D$ on $\X$, $\lfloor K_{\X} - D\rfloor = K_{X} - \lfloor D\rfloor$. This yields the following version of Riemann--Roch for $\X$: 
\begin{align*}
  h^{0}(\X,D) - h^{0}(\X,K_{\X} - D) &= h^{0}(X,\lfloor D\rfloor) - h^{0}(X,\lfloor K_{\X} - D\rfloor)\\
    &= h^{0}(X,\lfloor D\rfloor) - h^{0}(X,K_{X} - \lfloor D\rfloor)\\
    &= \deg(\lfloor D\rfloor) - g(X) + 1. 
\end{align*}
(This appears as Corollary 1.189 in \cite{beh}, for example.) 

\begin{ex}
\label{ex:wtdprojlinecanring}
Let $a$ and $b$ be relatively prime integers that are not divisible by $\char k$ and consider the weighted projective line $\X = \P(a,b)$. Then $\X$ is a stacky $\P^{1}$ with two stacky points $P$ and $Q$ having cyclic stabilizers $\Z/a\Z$ and $\Z/b\Z$, respectively. Thus $\lfloor K_{\X}\rfloor = K_{\P^{1}} = -2H$ so $h^{0}(\X,rK_{\X}) = 0$ for all $r\geq 1$. In this case the canonical ring is trivial: $R(\X) = H^{0}(\X,\orb_{\X})\cong k$. This agrees with Example 5.6.9 in \cite{vzb}: $\P(a,b)$ is {\it hyperbolic}, i.e.~$\deg K_{\X} < 0$, so $R(\X)\cong k$. 
\end{ex}

\begin{ex}
Assume $\char k\not = 2$. Let $\X$ be a stacky curve with a single stacky point $Q$ of order $2$ and with coarse space $X$ of genus $1$. Then $K_{X} = 0$ so $K_{\X} = \frac{1}{2}Q$. Thus by Riemann--Roch, for any $n\geq 1$ we have 
$$
h^{0}(\X,nK_{\X}) = \begin{cases}
  1, & n = 0,1\\
  \left\lfloor\frac{n}{2}\right\rfloor, & n\geq 2. 
\end{cases}
$$
Example 5.7.1 in \cite{vzb} gives an explicit description of $R(\X)$ in terms of generators and relations, but for now, the dimension count is the essential information. 
\end{ex}

The subtle point above is that for a divisor $D$ on a stacky curve $\X$ with coarse space $X$, $\lfloor K_{\X} - D\rfloor = K_{X} - \lfloor D\rfloor$ need only hold when $\X$ is \emph{tame}. We have seen that this is not the case in the wild case. For example the stack $\X = [Y/(\Z/p\Z)]$ from Example~\ref{ex:AScanonicaldiv} has canonical divisor $K_{\X} = -2H + (m + 1)(p - 1)Q$, so $\lfloor K_{\X}\rfloor \not = K_{\P^{1}} = -2H$ for most choices of $m$. However, we can still apply Riemann--Roch to obtain new information in the wild case. 

\begin{ex}
Already for a wild $\P^{1}$ in characteristic $p$ we have new behavior compared to the tame case (see Example~\ref{ex:wtdprojlinecanring}). Let $\X$ be the quotient stack $[Y/(\Z/p\Z)]$ given by affine Artin--Schreier equation $y^{p} - y = \frac{1}{x^{m}}$, as in Example~\ref{ex:AScanonicaldiv}. We computed $K_{\X} = -2H + (m + 1)(p - 1)Q$, where $Q$ is the single stacky point over $\infty$ of order $p$. Then by Lemma~\ref{lem:floor} and Riemann--Roch, 
$$
h^{0}(\X,nK_{\X}) = h^{0}(\P^{1},\lfloor nK_{\X}\rfloor) = \deg(\lfloor nK_{\X}\rfloor) + 1 + h^{0}(\P^{1},K_{\P^{1}} - \lfloor nK_{\X}\rfloor). 
$$
For $n = 1$, this is already a new formula: 
$$
\lfloor K_{\X}\rfloor = -2H + \left (m - \left\lfloor\frac{m}{p}\right\rfloor\right )\infty. 
$$
Therefore $\deg(\lfloor K_{\X}\rfloor) = m - \left\lfloor\frac{m}{p}\right\rfloor - 2$. This also shows that $K_{\P^{1}} - \lfloor K_{\X}\rfloor$ is non-effective when $m\geq 2$, so we get 
$$
h^{0}(\X,K_{\X}) = \max\left\{m - \left\lfloor\frac{m}{p}\right\rfloor - 1,1\right\}. 
$$
More generally, since $\left\lfloor\frac{k(p - 1)}{p}\right\rfloor = k - \left\lfloor\frac{k}{p}\right\rfloor - 1$, we can compute 
$$
\lfloor nK_{\X}\rfloor = -2nH + \left (n(m + 1) - \left\lfloor\frac{n(m + 1)}{p}\right\rfloor - 1\right )\infty. 
$$
So $\deg(\lfloor nK_{\X}\rfloor) = n(m - 1) - \left\lfloor\frac{n(m + 1)}{p}\right\rfloor - 1$ and again, when $m\geq 2$, $K_{X} - \lfloor K_{\X}\rfloor$ is non-effective. By Riemann--Roch, 
$$
h^{0}(\X,nK_{\X}) = \max\left\{n(m - 1) - \left\lfloor\frac{n(m + 1)}{p}\right\rfloor,1\right\}. 
$$
\end{ex}


\section{Future Directions}
\label{sec:future}

To close, we describe one application of the computations in Section~\ref{sec:canrings} and preview the author's work in progress generalizing the Artin--Schreier root stack to moduli of $p$th power roots of line bundles and sections.

\subsection{Modular Forms in Characteristic $p$}

Modular forms are ubiquitous objects in modern mathematics, appearing in a startling number of places, such as: the study of the Riemann zeta function and more general $L$-functions; the proof of the Modularity Theorem and related results by Wiles, Taylor, et al.; the representation theory of finite groups; sphere packing problems; and quantum gravity in theoretical physics. They are a critical tool in the Langlands Program and in the study of the Birch and Swinnerton-Dyer Conjecture. For an overview of the modern theory, see \cite{BvdGHZ}, and for a longer survey, see \cite{ono}. A useful feature of modular forms is that they fall into finite dimensional vector spaces and therefore possess linear relations among their coefficients that encode a wealth of number theoretic data. 

Recall that for an even integer $k$, a classical {\it modular form} of weight $k$ is a holomorphic function on the complex upper half-plane $f : \h\rightarrow\C$ satisfying a holomorphicity condition at the point ``$\infty$'' (thought of as $i\infty$) and a modular condition: 
$$
f(z) = (cz + d)^{-k}f(gz) \quad\text{for any } g = \begin{pmatrix} a & b \\ c & d\end{pmatrix}\in\Gamma
$$
where $\Gamma$ is a congruence subgroup of $SL_{2}(\Z)$ acting on $z\in\h$ by fractional linear transformations: 
$$
\begin{pmatrix} a & b \\ c & d\end{pmatrix}\cdot z = \frac{az + b}{cz + d}. 
$$
The vector space of modular forms of weight $k$, denoted $\M_{k}(\Gamma)$, can be computed in principle for any $\Gamma$ and any even $k\in\Z$, but the formulas themselves conceal a rich algebro-geometric structure that underpins these vector spaces. 

To explain this algebro-geometric structure, we recall the following alternate definition of modular forms. Let $Y(\Gamma) = \h/\Gamma$ be the orbit space of the action of $\Gamma$ on the upper half-plane. Then $Y(\Gamma)$ admits a complex structure making it into an open Riemann surface. One may compactify this Riemann surface by adding orbits corresponding to the cusps of $\Gamma$ to obtain a proper Riemann surface denoted $X(\Gamma)$. Then a modular form $f$ of weight $k$ determines a holomorphic differential $k/2$-form $f(z)\, dz^{k/2}$ on $X(\Gamma)$. More precisely, $X(\Gamma)$ admits the structure of a complex orbifold, or equivalently, a complex stacky curve when we add the set of cusps to $\h$ and take the quotient stack $[\h\cup\Gamma\{\infty\}/\Gamma]$. Then the image of each orbit of elliptic points is a stacky point of order $n$, where $n$ is the order of the isotropy subgroup at any elliptic point in that orbit. 

\begin{prop}
Let $D$ be the $\Q$-divisor of cusps on $X = X(\Gamma)$. Then 
$$
\M_{k}(\Gamma) \cong H^{0}(X,\Omega_{X/\C}^{k/2}(D))
$$
for every even integer $k$, where $\Omega_{X/\C}^{1}$ is the canonical sheaf on $X$. 
\end{prop}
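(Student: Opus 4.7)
The plan is to construct an explicit isomorphism by sending a modular form $f$ of weight $k$ to the differential $\omega_{f} = f(z)\, dz^{k/2}$ on the upper half-plane, then verify that $\omega_{f}$ descends to a global section of $\Omega_{X/\C}^{k/2}(D)$ on the stacky curve $X(\Gamma)$, and conversely that every such section arises this way. The map $f\mapsto\omega_{f}$ is manifestly $\C$-linear, so the substance is that it is a well-defined bijection onto $H^{0}(X,\Omega_{X/\C}^{k/2}(D))$.

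First I would verify $\Gamma$-invariance on $\h$: for $g = \begin{pmatrix} a & b \\ c & d\end{pmatrix}\in\Gamma$, the chain rule gives $d(gz) = (cz+d)^{-2}\,dz$, hence $(d(gz))^{k/2} = (cz+d)^{-k}\,dz^{k/2}$, and combined with $f(gz) = (cz+d)^{k}f(z)$ this yields $g^{*}\omega_{f} = \omega_{f}$. Thus $\omega_{f}$ descends to a $k/2$-form on the quotient stack $Y(\Gamma) = [\h/\Gamma]$.

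Next I would analyze the behavior at the two kinds of special points, which is where the stacky viewpoint is essential. At an elliptic point of order $n$, an \'{e}tale chart of $Y(\Gamma)$ looks like $[\D/\mu_{n}]$ for a disk $\D\subset\h$, and $\omega_{f}$, being $\mu_{n}$-invariant, directly extends to a section of $\Omega^{k/2}$ on the stacky chart; no pole at the elliptic point is needed because the stacky curve already accounts for the isotropy (this is precisely what Proposition~\ref{prop:stackyRH} encodes). At a cusp of width $h$, using the local parameter $q = e^{2\pi i z/h}$, one computes $dz^{k/2} = C\, q^{-k/2}(dq/q)^{k/2}\cdot q^{k/2}$ up to nonzero constants, so the holomorphic Fourier expansion $f(z) = \sum_{n\geq 0} a_{n}q^{n}$ gives $\omega_{f}$ a pole of order at most $k/2$ at the cusp. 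Hence $\omega_{f}\in H^{0}(X,\Omega_{X/\C}^{k/2}(D))$.

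Finally I would construct the inverse: given a section $s$ of $\Omega_{X/\C}^{k/2}(D)$, pulling back along the uniformization $\h\rightarrow Y(\Gamma)\hookrightarrow X(\Gamma)$ yields a holomorphic $k/2$-form on $\h$ of the shape $f(z)\,dz^{k/2}$ for a unique holomorphic function $f$; $\Gamma$-invariance of the pullback gives the weight-$k$ modular transformation law, and the bound on the pole order at each cusp (allowed by $+D$) translates, via the same local computation with $q$, to the holomorphicity of $f$ at the cusps. These two assignments are manifestly mutually inverse. The main obstacle is the analysis at the elliptic points: one must check that the condition for $\omega_{f}$ to extend as a genuine section of $\Omega^{k/2}$ on the \emph{stacky} curve (rather than a meromorphic form on the coarse space with prescribed pole/zero orders at the elliptic loci) is automatic from modularity, which is exactly where passing to the Deligne--Mumford stack $X(\Gamma)$ pays off.
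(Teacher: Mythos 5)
The paper itself gives no proof of this proposition: it is quoted as the classical dictionary between modular forms and pluricanonical sections on the stacky modular curve, with the real content deferred to \cite{vzb}, whose canonical-ring machinery is then invoked. So your outline can only be compared with the standard argument, and it is exactly that argument: $f\mapsto f(z)\,dz^{k/2}$, $\Gamma$-invariance via the chain rule, regularity at an elliptic point through the chart $[\,\text{disc}/\mu_{n}]$ (and you correctly identify this as the place where the stacky structure eliminates the usual fractional-floor bookkeeping on the coarse space), the $q$-expansion computation at the cusps, and the evident inverse by pullback along $\h\rightarrow Y(\Gamma)$.

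The one step you must tighten is the passage from your cusp computation to the claimed membership $\omega_{f}\in H^{0}(X,\Omega_{X/\C}^{k/2}(D))$. Your computation correctly gives $dz^{k/2} = C\,(dq/q)^{k/2}$ (the factors $q^{\pm k/2}$ in your displayed expression simply cancel), so $\omega_{f}$ has a pole of order up to $k/2$ against the holomorphic frame $dq^{k/2}$. If $\Omega_{X/\C}^{k/2}(D)$ is read literally as $\Omega_{X/\C}^{\otimes k/2}\otimes\orb_{X}(D)$ with $D$ the reduced divisor of cusps, then only first-order poles are permitted, your ``hence'' does not follow for $k\geq 4$, and the asserted isomorphism would in fact fail (an Eisenstein series of weight $k\geq 4$ nonvanishing at a cusp maps to a section with a pole of order exactly $k/2$ there). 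The statement is true precisely under the log-pluricanonical reading $\Omega_{X/\C}^{k/2}(D) = (\Omega_{X/\C}^{1}(D))^{\otimes k/2}$, equivalently with the twist $\tfrac{k}{2}D$ in place of $D$; your local frame $(dq/q)^{k/2}$ is exactly a generator of that sheaf at a cusp, so your computation proves the correct statement once you make this identification explicit. The same point is needed for your inverse map: writing a section near a cusp as $g(q)\,(dq/q)^{k/2}$ with $g$ holomorphic is what converts the allowed poles into holomorphy of $f$ at the cusp, whereas the literal reading would only recover forms vanishing to order at least $k/2-1$ there.
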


In particular, the results of \cite{vzb} on canonical rings of stacky curves apply here to give an explicit (and in some cases new) description of the ring of modular forms $\M_{*}(\Gamma) := \bigoplus_{k\in 2\Z} \M_{k}(\Gamma)$ for any congruence subgroup $\Gamma$. 

Over an arbitrary field $K$, one does not have access to the complex structure of a modular curve like in the complex case. Nevertheless, a suitable notion of modular forms can be defined over $K$. Following \cite{kat}, one defines a {\it geometric modular form} of weight $k$ over $K$ to be the assignment of an element $f(E/A,\omega)\in A$ to every $K$-algebra $A$, elliptic curve $E/K$ and nonzero differential $\omega\in H^{0}(E,\Omega_{E/A}^{1})$ satisfying a modularity condition 
$$
f(E/A,\alpha\omega) = \alpha^{-k}f(E/A,\omega) \quad\text{for all } \alpha\in A^{\times},
$$
and naturality conditions with respect to maps of $K$-algebras $A\rightarrow B$ and elliptic curves $E\rightarrow E'$, as well as an analogue of the holomorphicity condition that one can impose formally using the Tate curve (cf.~\cite[Appendix 1.1]{kat}). As in the classical case, geometric modular forms of level $N$ can be specified by imposing a torsion structure on elliptic curves over $K$. Different choices of torsion structure lead to the construction of the modular curves $X_{K}(N),X_{K,0}(N)$ and $X_{K,1}(N)$ over $K$, which are geometric analogues of the complex modular curves $X(N),X_{0}(N)$ and $X_{1}(N)$. 

\begin{question}
Can one compute the space $\M_{k}(N;K)$ of geometric modular forms of weight $k$ and level $N$ over an algebraically closed field $K$ of characteristic $p > 0$? In particular, can this be done when the corresponding stacky modular curve has wild ramification? 
\end{question}

For example, in \cite{kat}, Katz observes that if $A$ is the Hasse invariant and $\Delta$ is the modular discriminant, then $A\Delta$ is a cusp form of weight $13$ over $\F_{2}$ (resp.~of weight $14$ over $\F_{3}$) but this cannot be the reduction mod $2$ (resp.~mod $3$) of a modular form over $\Z$. More generally, we expect the orbifold structures of $X_{K}(N),X_{K,0}(N)$ and $X_{K,1}(N)$ to play a role in the determination of $\M_{k}(N;K)$. 

Now suppose $\char K = p > 0$. Then the covers of curves 
$$
X_{K}(N)\rightarrow X_{K,1}(N)\rightarrow X_{K,0}(N)\rightarrow X_{K}(1) = \P_{K}^{1}
$$
may have points with wild ramification. In fact, one such example is pointed out in \cite[Rmk. 5.3.11]{vzb}, which in turn cites an article \cite{bcg} that describes the ramification of the full cover $X_{\F_{p}}(\ell)\rightarrow X_{\F_{p}}(1) = \P_{\F_{p}}^{1}$ for primes $\ell\not = p$. These covers can have nonabelian inertia groups -- something that only happens in finite characteristic -- and they can then be decomposed and studied more carefully in the tower $X_{\F_{p}}(\ell)\rightarrow X_{\F_{p},1}(\ell)\rightarrow X_{\F_{p},0}(\ell)\rightarrow \P_{\F_{p}}^{1}$. The author plans to bring the results in Section~\ref{sec:canrings} to bear on this problem in the future.

\subsection{Artin--Schreier--Witt Theory for Stacky Curves}

In another direction, Theorems~\ref{thm:pcoverfactorsthroughASrootstack} and~\ref{thm:locallyASrootstack} essentially give a complete solution to the problem of classifying stacky curves in characteristic $p > 0$ having `at worst' $\Z/p\Z$ automorphism groups. To handle higher order cyclic automorphism groups, one needs the full power of {\it Artin--Schreier--Witt theory}, which we briefly review here. Suppose $k$ is a perfect field of characteristic $p > 0$ and $L/k$ is a Galois extension with group $G = \Z/p^{n}\Z$. When $n = 1$, we know by Artin--Schreier theory that such extensions are all of the form $L = k[x]/(x^{p} - x - a)$ for some $a\in k$, with different isomorphism classes coming from different valuations $v(a)$. 

For a general cyclic extension of order $p^{n}$, Artin--Schreier--Witt theory and the arithmetic of Witt vectors encode the automorphism data of the extension in a systematic way. The basic theory can be found in various places, including \cite[p.330]{lang}, but here is a summary. For a commutative ring $A$ of characteristic $p > 0$, the ring of Witt vectors $\W(A)$ is an object that ``lifts'' the arithmetic of $A$ to characteristic $0$. For example, $\W(\F_{p}) \cong \Z_{p}$, the ring of $p$-adic integers. In particular, there is a Frobenius operator $F : \W(A)\rightarrow\W(A)$ lifting the $p$th power operation $a\mapsto a^{p}$ to characteristic $0$. This allows one to classify cyclic $p$th power extensions of a field of characteristic $p$ in an analogous way to Kummer theory (for tame cyclic extensions) and Artin--Schreier theory (for $\Z/p\Z$-extensions). Specifically, any $\Z/p^{n}\Z$-extension can be obtained by adjoining roots of an equation in $\W(A)$ of the form $F\vec{x} - \vec{x} = \vec{a}$. 

Let $n\geq 1$ and let $\W_{n} = \W_{n}(k)$ denote the ring of length $n$ Witt vectors. In order to study the geometry of cyclic $p^{n}$th-order covers of curves in characteristic $p$, Garuti \cite{gar} introduced the following geometric version of Artin--Schreier--Witt theory. Define schemes $(\overline{\W}_{n},\orb_{\overline{\W}_{n}}(1))$ inductively by 
\begin{align*}
  (\overline{\W}_{1},\orb_{\overline{\W}_{1}}(1)) &= (\P^{1},\orb_{\P^{1}}(1))\\
  \text{and}\quad (\overline{\W}_{n},\orb_{\overline{\W}_{n}}(1)) &= (\P(\orb_{\overline{\W}_{n - 1}}\oplus\orb_{\overline{\W}_{n - 1}}(p)),\orb_{\P}(1)) \quad\text{for } n\geq 2,
\end{align*}
where $\P(E)$ denotes the projective bundle of a vector bundle $E$ and $\orb_{\P}(1)$ is the tautological bundle of the projective bundle in that step. These schemes assemble into a tower $\cdots\rightarrow\overline{\W}_{3}\rightarrow\overline{\W}_{2}\rightarrow\overline{\W}_{1} = \P^{1}$ which contains a wealth of information about cyclic covers. For us, the most important features of these $\overline{\W}_{n}$ are: 
\begin{itemize}
  \item Each $\overline{\W}_{n}$ is a projective scheme. In fact, $\cdots\rightarrow\overline{\W}_{3}\rightarrow\overline{\W}_{2}\rightarrow\overline{\W}_{1}$ is an example of a {\it Bott tower}, so each member is a toric variety. (For more on this topic, see \cite{cs}.) 
  \item For each $n$, there is an open immersion $j_{n} : \W_{n}\hookrightarrow\overline{\W}_{n}$. 
  \item The action of $\W_{n}$ on itself by Witt-vector translation extends to an action of $\W_{n}$ on $\overline{\W}_{n}$. Garuti calls this an {\it equivariant compactification} of the ring $\W_{n}$. 
  \item The higher ramification data of a cyclic $\Z/p^{n}\Z$-cover of curves can be determined explicitly by intersection theory in $\overline{\W}_{n}$ involving the boundary divisor $B_{n} := \overline{\W}_{n}\smallsetminus j_{n}(\W_{n})$. 
\end{itemize}

At the bottom of the tower, we can view $\overline{\W}_{1} = \P^{1}$ as the starting place for our theory of Artin--Schreier root stacks: we replaced $\P^{1}$ with the weighted projective line $\P(1,m)$, extended the map $\wp = Fx - x$ to the compactification $\P(1,m)$ and pulled back along the induced map $[\P(1,m)/\G_{a}]\rightarrow [\P(1,m)/\G_{a}]$ to take an Artin--Schreier root. Set $\overline{\W}_{1}(1,m) := \P(1,m)$. We can regard this as a stacky compactification of the ring of length $1$ Witt vectors $\W_{1} = \G_{a}$. 

The key insight for generalizing this is to use the fact (Lemma~\ref{lem:weightedprojrootstack}) that $\P(1,m)$ is itself a root stack over $\P^{1}$. Pulling back the root stack structure along the tower $\cdots\rightarrow\overline{\W}_{3}\rightarrow\overline{\W}_{2}\rightarrow\overline{\W}_{1} = \P^{1}$ defines $\overline{\W}_{n}(1,m,1,\ldots,1)$ for each $n > 1$. Each of these is a root stack over $\overline{\W}_{n}$ with stacky structure at the preimage of the point $\infty\in\P(1,m)$. Further weights can be introduced by iterated root stacks, and ultimately this leads to the definition of the {\it compactifed Witt stack} $\overline{\W}_{n}(1,m_{1},\ldots,m_{n})$ for any sequence of positive integers $(m_{1},\ldots,m_{n})$. In a sequel to the present article, the author plans to describe these stacks explicitly and use their properties to study $p$th-power order stack structures, as a generalization of the program in Section~\ref{sec:ASrootstacks}.



\end{document}